\newenvironment{proof}{\paragraph{\underline{Proof} :}}{\hfill$\square$}
  \newenvironment{proofth}[1]{\paragraph{\underline{#1}} :}{\hfill$\square$}
    \newtheorem{theorem}{Theorem}
    \newtheorem{lemma}{Lemma}
    \newtheorem{remark}{Remark}
    \newtheorem{corollary}{Corollary}
    \newtheorem{definition}{Definition}
        \newtheorem{aptheorem}{Theorem}
        \newtheorem{aplemma}{Lemma}
                \newtheorem{apremark}{Remark}
\newcommand{\eps}{\varepsilon}
\newcommand{\A}{\mathcal{A}}
\newcommand{\D}{\mathcal{D}}
\newcommand{\X}{\mathcal{X}}
\newcommand{\I}{\mathcal{I}}
\newcommand{\N}{\mathcal{N}}
    \newcommand\CC{\hbox{C\kern -.58em {\raise .54ex \hbox
    {$\scriptscriptstyle |$}}
    \kern-.55em {\raise .53ex \hbox{$\scriptscriptstyle |$}} }}
      \newcommand\qd{\hfill$\sqcap\kern-8.0pt\hbox{$\sqcup$}$}
    \newcommand\NN{\hbox{I\kern-.2em\hbox{N}}}
       \newcommand\nn{\hbox{I\kern-.2em\hbox{N}}}
    \newcommand\RR{I\!\!R}
    \newcommand\sRR{{\sl \hbox{I\kern-.2em\hbox{R}}}}
    \newcommand\QQ{\hbox{I\kern-.53em\hbox{p}}}
    \newcommand\J{\displaystyle {\cal J}}
  \newcommand{\argmin}{\hbox{argmin}}     
\title{Minimum   Flow  Steepest Descent     Approach   for  \\   Nonlinear PDE   }
\author{Noureddine Igbida  \thanks{  Institut de recherche XLIM, UMR-CNRS 7252, Faculté des Sciences et Techniques, Université de Limoges, France. Email:  noureddine.igbida@unilim.fr}}
\date{\today}
\begin{document}
 \maketitle

 \begin{abstract}
 	 
 	 	This paper presents a   minimum flow approach applicable to a wide range of doubly nonlinear diffusion problems. We introduce a minimum flow steepest descent algorithm that seeks an optimal traffic flow by minimizing an internal energy function, while incorporating a specific minimum flow constraint for the transition work. This flexible framework surpasses traditional methods by generalizing (among other things) the established $H^{-1}$-theory for linear diffusion to the nonlinear setting. It offers distinct advantages in handling diverse applications, leveraging both the intrinsic internal energy and the inherent traffic flow concepts. 	
 	 	 The approach demonstrably tackles various applications, including fluid flow in porous media and many other scenarios  like the Stefan/Hele-Shaw problem. Notably, it can handle diverse differential operators, encompassing even nonlinear ones like the $p-$Laplacian and Leray-Lions operators.  
 	 	  Furthermore, it allows for the simultaneous handling convection/transport phenomena and  complex boundary conditions within both the energy and transition work functions, contrasting favorably with other steepest descent algorithm  like the JKO scheme in Wasserstein spaces. 
 	 	   The paper delves into details of this comparison and offers additional theoretical insights involving $p'-$curve and metric gradient flow in Sobolev dual spaces.

 \end{abstract}

 \tableofcontents

 
\section{Introduction and preliminaries}
\setcounter{equation}{0}

\subsection{Introduction}
Dynamic systems generally aim to minimize their internal energy over time. By examining the dynamics at a discrete level, we can develop strategies that systematically guide the system towards the state of minimal energy using a step-by-step approach called steepest descent. One efficient approach involves managing the process of proximal energy minimization. This entails a dynamic procedure that continuously updates the system's state, aiming to decrease its internal energy while simultaneously minimizing the effort needed to transition from the current state to the subsequent one.

This approach has proven to be a robust tool for both describing continuous dynamics and establishing new dynamic systems that utilize their internal energy and the effort required to transition between states. Additionally, this approach has proven to be a valuable tool for devising dynamic strategies that aim to achieve energy-minimized states in various applications, including control theory, machine learning, and physics.  However, most existing steepest descent algorithms    often assess transition effort based on topological properties like gradient flow within specific spaces (e.g., Hilbert, Banach, metric Wasserstein), which may  limit their flexibility. Our approach goes through the so-called minimum flow problem and  offers a way to overcome this limitation, enabling steepest descent to minimize internal energy in a broader class of nonlinear PDEs like 
\begin{equation}\label{E0}
	\partial_t \rho-\nabla \cdot (\phi  )= f,\quad \phi  = \phi  (x,\nabla \eta),\:   \eta= \eta(x,\rho)\quad \hbox{ for } (t,x)\in Q:=(0,T)\times \Omega,   
\end{equation}
in  an open and bounded  regular domain  $\Omega$,  subject to  non-homogeneous boundary conditions of Dirichlet, Neumann, or mixed type.  Here $T>0$  is an horizon time and $f$  is source term.  The applications $A\in \RR^N\to \phi  (x,A)\in \RR^N$  and  $r\in \RR\to \eta(x,r)\in \RR$  are given maps, possibly multivalued,  and assumed to be maximal monotone.  One sees that,  when the mobility function $\phi $  is linear, specifically $\phi  (x,\nabla \eta)=\nabla \eta,$  the resulting diffusion process is characterized by linear diffusion. For nonlinear mobility functions like $\phi  (x,\nabla \eta)=\vert \nabla \eta\vert^{p-2}\nabla \eta,$  the diffusion process exhibits $p$-Laplacian characteristics.  

In addition to its theoretical value in proving existence and providing numerical approximations, our approach offers a meaningful interpretation that connects the solution to the system's internal energy and the various parameters, such as the source term and boundary conditions. It also establishes a direct link between the Euler-Implicit scheme, commonly used in the Crandall-Liggett $L^1-$ theory, and a steepest descent algorithm that aims to minimize a proximal internal energy. This connection is possible despite the fact that the primary operator driving the dynamic is not necessarily a sub-differential operator in any  usual Banach space ($L^p,$  Sobolev or its dual space).  In certain cases, our approach enables to restore   the sub-differential operator process through the use of its metric version in dual Sobolev space. 
For homogeneous problems (homogeneous $\phi $  and boundary conditions), the approach can be simply linked to classical Hilbert gradient flow theory (cf. \cite{Br,Barbu}) in $H^{-1}$ for linear $\phi $ ; i.e., $\phi  (x,\nabla \eta)=  \nabla \eta.$  Furthermore, for general $\phi $ of $p-$Laplacian type, it can be connected to metric gradient flow and $p'-$curve of maximal slope theory in the metric space $W^{-1,p'}$ 

%

\medskip  
 We show, as well, 
that the  approach provides a valuable tool for understanding and solving systems with dynamics that can be decomposed into two processes : a forcing process  connected for instance to a given transport vector field, and  minimizing internal energy process.  More precisely ;  we show how one can use the approach to handle nonlinear diffusion-transport equation of the form 
\begin{equation}\label{EDT}
	\partial_t \rho-\nabla \cdot (\phi  +\rho \: V)= f,\quad \phi  = \phi  (x,\nabla \eta),\:   \eta= \eta(x,\rho)\quad \hbox{ in }Q ,   
\end{equation} 
where $V=V(t,x)$ is a given transport fields.   Roughly speaking,  the dynamics of the system here can be decomposed into two processes : the transport process describes how the system moves under the action of the forcing vector field $V,$ while the minimization  energy process describes how the system tends to reach a stable equilibrium.   Degenerate nonlinear parabolic problems like \eqref{EDT}  received a lot of attention in the past couple of
decades. For a non-exhaustive list of classical works on this subject we refer to \cite{AltLu,Car,Vbook,Peletier} and the references therein.  The approach we present provides a way to understand how these two processes interact to produce the system's dynamics. In particular, it shows that the solution of the system is the state that minimizes the system's internal energy subject to the constraints imposed by the transport process and the boundary conditions.  This interpretation has several implications. First, it provides a way to understand how the system's dynamics are affected by the different parameters. For example, the source term can be interpreted as a force that acts on the system, and the boundary conditions can be interpreted as constraints that limit the system's motion.  Second, the interpretation can be used to design new algorithms for solving the system's dynamics from theoretical and numerical point of view.

  Let us recall that the approach using the JKO algorithm in Wasserstein spaces has already been successfully applied to partially address this type of question (cf. \cite{Vbook,AGS,Stbook,KM}).  
  However, this theory's limitations to positive data, mass conservation, specific internal energies, and its technical complexity did not allow to broaden the scope of applications.     The approach we present here draws inspiration from the JKO algorithm and the minimal flow problem (cf. \cite{Beckmann}). It  allows to cover a wide range of applications that go beyond $H^{-1}$-gradient flow framework  while keeping the Wasserstein spirit for a large field of applications.

  \subsection{Preliminaries}  
  More formally,   let $X$ be a state space and $\mathcal E(\rho)$ be the internal energy of a given system  at state $\rho\in X.$   At each time $t>0,$  the dynamic system tends simultaneously  to decrease its internal energy and to minimize the work required to move from state $\rho(t)$ to state $\rho(t+h).$  This can be achieved by solving the following optimization problem (proximal energy) at each time step:
  $$\min_{\rho \in X}  \Big\{  \mathcal E(\rho) + W_h(\rho(t), \rho) \Big\} ,$$ 
  where $W_h(\rho(t),\rho)$ is a measure of the work required to move from state $\rho(t)$ to state $\rho,$ throughout small time step $h>0.$  The solution to this optimization problem $\rho(t+h) $ is the next state of the system.   To the point, one can  consider the  $\tau-$time steps defined by  $t_0 = 0 < t_1<\cdots < t_n <T$, and    the sequence of piecewise constant curves
  $$\rho_\tau =\sum_{i=1}^n\rho_i   \:  \chi_{]t_{i-1}  ,t_{i}]}  , $$
  where  \begin{equation}\label{steepestdescent0}
  	\rho_i= \argmin_{\rho \in X} \Big\{  \mathcal E  ( \rho) + W_\tau(\rho_{i-1}, \rho)\Big\} ,\quad \hbox{   for }i=1,...n, \hbox{ with }\rho_0=\rho(0).
  \end{equation}
  Then, we expect  the limit as $\tau\to 0$  of $\rho_\tau$ converges to  the solution of the evolution problem  \eqref{E0}.   
  Working within the framework of   Hilbert  space $(X,\Vert -\Vert),$ the general theory suggest to connect $W(\rho(t), \rho)$ to $\Vert\rho(t)- \rho \Vert^2/2\tau $   to  obtain a representation of the solution through  the gradient flow equation:
  \begin{equation}\label{evol0}
  	\left\{ \begin{array}{ll} \rho_t(t)    +\partial \mathcal E(\rho(t)) \ni 0\quad \hbox{ in }(0,T) \\  \\ 
  		x(0)\hbox{ is given},  \end{array}\right. 
  \end{equation} 
  where $\partial \mathcal E(\rho)$ denote sthe sub-differential (possibly multi-valued map) of the functional $\rho\in X\to \mathcal E(\rho)\in (-\infty,\infty].$   The expression \eqref{steepestdescent0} can be related to   both to the resolvent associated with the operator $\partial \mathcal E $  and Euler-Implicit discretisation of \eqref{evol0}. The   evolution problem  in turn  is   a gradient flow  in a Hilbert  space  which describes the evolution of a continuous curve $t\to \rho(t)\in X$ that follows the direction of steepest descent of the  functional $\mathcal E$ (cf. \cite{Br}).  One can consider a more general situation where  $(X,d)$ is metric space and  $W_h$  is build on the distance $d$. In this case, the expression \eqref{steepestdescent0} reveals   the minimizing movement scheme à la  De Giorgi (see \cite{AGS}). 
  The approach resulted in a broad theory of gradient flows in a metric space where the derivatives $  \rho_t  $ and $ \partial_\rho \mathcal E(\rho)$ need to be interpreted in an appropriate   way in $(X,d)$ (cf. the book \cite{AGS}).

  \medskip  
  
  For instance, the porous medium-like equation
  \begin{equation}\label{PME} 
  	\partial_t \rho -\Delta \rho^m = 0 
  \end{equation}
  in a bounded domain  arises in various applications, especially in biology, to represent the evolution  of species as they strive to minimize their internal energy $\mathcal E(\rho),$ which depends inherently on the density $\rho.$  Since the works \cite{OttoPME} and \cite{JKO}, it has been established that the PDE \eqref{PME} subject to homogeneous Neuman boundary condition can  be derived by employing $\mathcal E(\rho)=\frac{1}{m-1}\int \rho^m \: dx$  in the context of $W_2-$Wasserstein distance by setting $W_\tau (\rho,\tilde \rho)= \frac{1}{2\tau} \mathcal W_2( \rho,\tilde \rho)^2$ (see Section \ref{SOverviewW} for the definition of $\mathcal W_2$).   
  It is well known that it can be also derived by utilizing the steepest descent algorithm for the internal energy $\mathcal E(\rho)=\frac{1}{m+1}\int \rho^{m+1} \: dx$  in the context of $H^{-1}-$norm.  This is achieved by setting $W_\tau (\rho,\tilde \rho)= \frac{1}{2\tau} \Vert \rho-\tilde \rho\Vert _{H^{-1}}^2$  (cf. \cite{Barbu}).   Yet, one must carefully define the $H^{-1}-$norm to adequately address boundary conditions (see Section \ref{SH1}).  These concepts can be expanded to accommodate some monotone nonlinearity $\eta(r)$ instead of $r^m,$ both for $W_2-$Wasserstein distance (cf.  \cite{KM} and the refs therein) and also $H^{-1}-$norm (cf. \cite{Dam,DK,Kenmochi}) frameworks.   Owing to the significant advancements in optimal mass transportation theory, the work of \cite{Agueh}  permits handling doubly nonlinear PDE
  $$\partial_t \rho -\nabla \cdot \phi (\nabla \eta(\rho)) = 0$$
  in the context of steepest descent algorithm and gradient flow with Wassertsein distance $W_c$, where $c$ is closely linked to the application $\phi $  assumed in these works to be  regular and satisfying the classical assumption of ellipticity. In particular, this enables providing an intriguing interpretation via steepest descent algorithm in Wasserstein space for porous medium-like equations governed by $p-$Laplacian nonlinear diffusion operator  instead of the Laplacian.  
  However, as far as we are aware, the extension of the concept to more general operators  as in \eqref{E0} remains unclear. Typical example may be given by   space-dependent $\phi $  and/or more general boundary conditions like Dirichlet, Neumann, or mixed homogeneous/non-homogeneous boundary conditions. The crux of the issue lies in the absence of a gradient flow framework to handle these types of extensions. Neither the $H^{-1}$ approach nor other  possible extension to Sobolev dual approach appear to be capable of addressing these issues, even for the $p-$Laplacian operator. Our objective is to develop an alternative steepest descent-type approach for an internal energy that generalizes the $H^{-1}-$approach, in the spirit of Wasserstein distance. The interest of this approach stems from its physical interpretation of the dynamics studied and its interrelation with the system's internal energy and boundary conditions, even if the evolution PDE may not be related directly to a gradient flow dynamic.

  \medskip 
  Coming back to the class of PDE \eqref{E0}, remember that $\rho\geq 0$ represents in many situations an unknown density of a fluid or a population, but $\rho$   could also be a sign-changing solution. In this case, the equation models the evolution of two types of materials/species  whose densities are represented by the positive and negative parts of $\rho$, respectively. The equation   highlights a second-order term to include linear or nonlinear diffusion phenomena through the term $\nabla \cdot  \phi  (x,\nabla \eta) .$  However, as we can see, the diffusion does not act directly on $\rho$, but on the parameter $\eta,$ which depends on $\rho$. In fluid mechanics, $\eta $ can be related to pressure, but in certain situations, it can simply appear as a corrective potential to adjust or reorient the transport phenomena of the model like in crowed motion (cf. \cite{MRS1}, \cite{MRS2}, \cite{MRSV}, \cite{EIJ}).  The relationship between $\rho$ and $\eta $, as well as the relationship between $\phi $  and $\nabla \eta,$ depends strongly on the physical phenomenon being modeled. This relationship can be linear, nonlinear, or multivalued, but it must always be monotone to ensure convexity  of the energy and well-posedness. 
  Typical example for the  applications (possibly multi-valued)  $(x,A)\to \phi (x,A)\in \RR^N$ and $(x,r)\to \eta(x,r)\in \RR^N$  includes the following problems  :   
  \begin{itemize}
  	\item Parabolic equation governed by a Leray-Lions type operator :  $\phi (x,A)$ is a Caratheodory applications satisfying classical ellipticity assumptions. 
  	\item Parabolic p-Laplacian equation : $\phi  (x,A)=k(x)\vert A\vert^{p-2}A ,$   	where $0<k$ is a given continuous function. 
  	\item Porous-Medium equation : $\eta(x,\rho)=\rho^m $     
  	\item Stefan problem : $\eta(x,r)= (r-a_1)^+-(a_2+r)^- ,$ where $a_1$ and $a_2$ are two given non-negative functions.  
  	\item Hele-Shaw problem $\eta(x,r)= a(x)\: \hbox{Sign}^{-1}(r) ,$ where  $\hbox{Sign}$ denotes the usual sign graph, and  $a$ is a given positive function. 
  \end{itemize}

  Our aim in this paper    is to explore this class of PDE through a “steepest descent” of an intrinsic internal energy functional we associate with the system   against a minimum flow  work.  To the best of our knowledge, the proposed approach is a new and innovative perspective that is comparable to, but not necessarily identical to, the optimal mass transport approach (as we will see). Additionally, one of the key features of the proposed approach is its ability to handle changing sign solutions and a variety of boundary conditions, including mixed non-homogeneous and Robin boundary conditions.    It is important to note that Wasserstein distances capture the movement of non-negative masses while maintaining mass conservation, making them suitable for describing the movement of particles within a given domain. However, the translation of Dirichlet boundary conditions into Wasserstein gradient flows remains a challenge (cf. \cite{FG}).

\section{Plan of the paper}

This paper focuses on an approach for solving evolution problems of the type \eqref{E0} and  \eqref{EDT} using the steepest descent method through minimum flow process.
  In Section \ref{SOverview}, we provide a brief overview of this algorithm  in the Wasserstein and $H^{-1}-$frameworks, followed by a formal description of our proposed approach.  In Section \ref{SAssumptions}, we introduce     assumptions and review fundamental concepts related to Sobolev functions, their traces, and normal traces of vector-valued Lebesgue functions with Lebesgue-integrable divergence. Section \ref{SProximal} delves into a comprehensive analysis of the proximal energy minimization problem, which serves as the foundation for our steepest descent algorithm. Section \ref{SSteepest} demonstrates how to employ the proximal energy minimization problem and steepest descent algorithm to investigate evolution problems of the type \eqref{E0} and  \eqref{EDT}.    Section \ref{SRemarks} offers general remarks, perspectives, and open questions related to the proposed approach.  At last, 
 Appendix section presents some technical lemmas which we use in the paper.

\section{Overview  : Wasserstein distance vs $H^{-1}-$norm}\label{SOverview}
\setcounter{equation}{0}
 
The aim of this section is to remind the reader  formally the main idea of   steepest descent algorithms for nonlinear PDE  in the   context of $W_2-$Wasserstein distance and   $H^{-1}-$Hilbert space .

\subsection{Proximity through $W_2-$Wasserstein distance}\label{SOverviewW}

  Since the work of \cite{JKO}, the approach  of steepest descent algorithms  is well known and well used in the study of linear and nonlinear diffusion-convection  phenomena. Here we focus the description on  PDE of the type \eqref{PME}.   
  
  	Remember, that $W_2-$Wasserstein distance between two Radon measures $\mu_1$ and $\mu_2$ with equal masses 	is defined as
  		$$	  \mathcal {W}_2(\mu, \nu) =  \left(\inf_{\gamma\in \mathcal{P}(\Omega\times \Omega)  } \Big\{ \int \|x-y\|^2d\gamma(x,y)\: :\:    \pi_x\#\gamma= \mu,\:  \pi_y\#  \gamma =\nu \Big\} \right)^{1/2} $$
  		The $W_2-$Wasserstein space 	is the metric space of probability measures	endowed with the  metric $\mathcal{W}_2.$   The $W_2-$Wasserstein distance between probability measures $\mu\in   \mathcal{P}(\Omega)$ 	and $\nu\in  \mathcal{P}(\Omega)$ is given also by  
\begin{equation}\label{BBF}
	  	\mathcal  {W}_2(\mu, \nu)^2= \inf\Big\{  \int_0^1\!\! \int \vert V(t,x)\vert^2\: d\rho (t,x) \: :\:  (\rho,V)\in \A (\mu, \nu)   \Big\} , 
\end{equation} 
  		where $$  \A (\mu, \nu)  := \Big\{ (\rho,V) \: :\: \rho \in AC([0,1]; \mathcal{P}_2(\Omega)),\:  V\in L^2_\rho([0,1]\times \Omega),\:   \rho(0)=\mu,\: \rho(1)=\nu, $$
  		$$\hbox{ and }\partial_t \rho + \nabla\cdot( V\: \rho) = 0\hbox{ in }[0,1]\times \overline \Omega  \Big\} .$$
  Let us consider an internal energy  functional $ \mathcal E\::\:  \mathcal{P}_2(\Omega)  \to [0,\infty]$    
  	given by 
  		\begin{equation}\label{internal0}
  		 \mathcal E ( \rho)=\left\{ \begin{array}{ll}
  		 	\int_\Omega f(\rho)\: dx   \quad & \hbox{  if }f( \rho)\in L^1(\Omega)\\   \\ 
  		 	+\infty & \hbox{ otherwise}
  		 	\end{array}\right. 
  	\end{equation} 
  	where $f$ is a given convex function assumed here  to be regular enough. 
  	The steepest descent algorithm associated with $\mathcal E$ through Wasserstein distance $	\mathcal  {W}_2,$ usually known as the JKO scheme since the seminal work \cite{JKO}, is given by 	    the proximal algorithm 
  		$$	\rho_{i+1} =  \hbox{argmin}_{\rho} \left\{  \frac{1}{2\tau} 	\mathcal  {W}_2(  \rho,\rho  )^2 +    \mathcal E( \rho) \right\}    \quad \quad \hbox{  for }, i=0,1,...n$$   
  		Then, taking an horizon time $T>0,$  $\tau-$time discretization    $t_0 = 0 < t_1<\cdots < t_n <T$  and letting $\tau\to 0$ in the approximation   
  		$$\rho_\tau =\sum_{i}\rho_i   \:  \chi_{[t _{i} ,t_{i+1}[} ,  $$
  		one covers the solution of the evolution problem 
\begin{equation}\label{eqwh}
	  	   \left\{ \begin{array}{ll}  \partial_t \rho  -\nabla \cdot  ( \rho\:  \nabla   f'(\rho) ) =0\quad \hbox{ in }Q:=(0,T)\times \Omega  \\  \\ 
  		 \rho\:  \nabla   f'(\rho)  \cdot \nu =0\quad \hbox{ on }\Sigma:=(0,T)\times \partial \Omega  \\  \\  
  			\rho(0)=\rho_0 .\end{array}\right.
\end{equation}   
  		For   classical results of this approach and related issues we refer the reader to the books \cite{AGS,Stbook} for a thoroughgoing  results as well as a corresponding 	literature

  		\subsection{Proximity through $H^{-1}-$norm}\label{SH1}
  	To keep the same context as the Wasserstein case, which conserves the total mass, we    present here the core concept of   an appropriate framework for the $H^{-1}$ approach (for more in-depth discussions, we  refer to  \cite{Kenmochi,DK}).   The conservation of mass involves considering the quotient vector space 
  		$$H:= H^1(\Omega)/\RR.$$  
  		That is the quotient space associated with the binary relation defined in $H^1(\Omega),$ by 
  		$$u_1\mathcal R u_2\quad \Longleftrightarrow  \quad u_1-u_2=c\hbox{   a.e in }\Omega, \hbox{ with }c\in \RR.  $$
  		 To avoid any confusion, we will identify the vectors $\overline u\in H,$  with their corresponding elements $u\in \overline u.$  
  		The space $H  $ is Hilbert space when equipped with the inner product 
  		$$a(u,v)=\int_\Omega \nabla u\cdot \nabla v\: dx,\quad \hbox{ for any }u,v\in H , $$ 
  		and its  associate norm 
  		$$\Vert u\Vert_{H^{-1}}=\Vert \nabla u\Vert_{L^2(\Omega)},\quad \hbox{ for any }u\in H.  $$ 
  		Then, let us consider $H^{-1},$ the  dual space (topological) of $H.$  Let us denote by $J$ the duality map  of $H$ and use the notation  $\langle \mu,z\rangle $ for the value of  $\mu\in  H^{-1}$  at $u\in H. $  We denote by $\Vert \: \Vert_{H^{-1}}$ the dual norm in $H^{-1}$ induced by $\Vert \: \Vert_{H }$ in $H.$ The duality map  $J$ may be defined  by $  J\: :\:  H^{-1} \to H $ and $J(\mu)=u,$ where $u$ is a solution of the Neuman boundary problem governed by the Laplace operator 
  		$$\left\{ \begin{array}{ll} 
  			-\Delta u=\mu \quad & \hbox{ in }\Omega\\ 
  			\partial_\eta u=0 &\hbox{ on }\partial \Omega.\\  \end{array}\right. $$ 
  		Moreover, the duality bracket  $\langle \: ,\: \rangle $  is given by 
  		$$ \langle \mu,u\rangle    =\int_\Omega \nabla J(\mu)\cdot \nabla u\: dx,\quad \hbox{ for any }\mu\in H^{-1}\hbox{ and }u\in H. $$ 
  		In particular this implies that 
  		$$\Vert \mu \Vert_{H^{-1}} =\Vert \nabla J(\mu)\Vert_{L^2(\Omega)}.   $$
 The steepest descent algorithm associated with a given  internal energy  $ \mathcal E$  through $H^{-1}-$norms appears through the usual Euler-Implicit time   discretization associated with the evolution problem 
 \begin{equation}\label{evolsubdiff}
 	\frac{du}{dt}+ \partial \mathcal E (u)\ni 0\quad  \hbox{ in }(0,T),
 \end{equation}
 where $ \partial  \mathcal E$ is given by the usual definition of sub-differential in Hilbert space which is $H^{-1}$ here. 
 Indeed, taking a $\tau-$time discretization    $t_0 = 0 < t_1<\cdots < t_n <T$ the Euler-Implicit time   discretization  aims to solve the stationary problem 
 $$u_{i+1}+\tau \partial \mathcal E (u_{i+1}) \nu u_{i},\quad \hbox{ for each }i=0,...n,$$
 where $u_0$ is the initial data associated with the evolution problem \eqref{evolsubdiff}.  In particular, $u_{i+1}$ may be given by 
 	$$	u_{i+1} =  \hbox{argmin}_{\rho} \left\{  \frac{1}{2\tau } \Vert u_{i}-u\Vert_{H^{-1}} ^2 +     \mathcal E(u) \right\}    \quad \quad \hbox{  for }, i=0,1,...n$$ 
   	Then,     letting $\tau\to 0$ in the approximation   
  	 	  $$u_\tau =\sum_{i}u_{i}   \:  \chi_{[t_{i}  ,t_{i+1}[}  , $$
  		one covers the solution of the evolution problem  \eqref{evolsubdiff}.   For the internal energy    \eqref{internal0}, the associate PDE  read 
  		\begin{equation}\label{eqH}
  			\left\{ \begin{array}{ll}  \partial_t u  -\Delta    f'(u)  =0\quad \hbox{ in }Q \\  \\ 
  			  \nabla   f'(u)  \cdot \nu =0\quad \hbox{ on }\Sigma  \\  \\  
  				u(0)=u_0 \end{array}\right.
  		\end{equation}


 \begin{remark}
In the  non-conservatif case, the usual $H^{-1}-$approach   consists to take simply the dual space of $H^1_0(\Omega)$ equipped with its usual norm. In this case, the associate nonlinear PDE is subject to homogeneous    Dirichlet boundary condition (\cite{Barbu}, one can see also remark \ref{RemH1} in Section \ref{SProximal} ). 
 	\end{remark}

   	\subsection{Minimum flow concept and related steepest descent}

The divergence differential operator can be used to model the movement of particles between regions with different concentrations. The equation
\begin{equation}\label{div}
	-\nabla \cdot \phi  = \mu - \nu \quad \text{in} \ \Omega
\end{equation}
characterizes the traffic flow $\phi $  between two distributions represented by a source $\mu$ and a target $\nu$ within an open bounded domain $\Omega.$ The minimum flow problem, originally formulated by Beckman (cf. \cite{Beckmann}), involves finding an optimal traffic flow by minimizing a specified cost function on $\phi .$  Inspired by the concept presented in \cite{Beckmann}, we propose a generalization of the minimum flow problem that can accommodate the study   of relevant practical scenarios for \eqref{EDT}.  For each given traffic flow, there is a corresponding traffic cost involving two outlays:  $F(x,   \phi (x) )$ in $\Omega$ and  $G(x, -\phi (x)\cdot \nu(x))$ on $\Gamma_D$ a specific portion of the boundary. Here,   $F\: :\:  \Omega \times \mathbb{R}^N \to \mathbb{R} $ and 	 $G\: :\:   \partial \Omega \times \mathbb{R}  \to \mathbb{R} $   are given functions. In this context, the optimal traffic cost is defined as the minimum total cost of the traffic, which corresponds to the minimum flow of the traffic cost 
\begin{equation}\label{bec0}
 \int_\Omega F(., \phi )\: dx + \int_{\Gamma_D} G(., -\phi \cdot \nu) \: d\sigma  .
\end{equation}
The goal is to find the traffic scheme that results in the lowest overall cost, taking into account the cost functions $F$ and $G$ applied to the flow quantities $\phi $  and $ \phi \cdot \nu$ at each location in the domain $\Omega$ and on  the boundary $  \Gamma_D,$ respectively.  

In other words, we define an intrinsic quantity, denoted for the moment by  $I(\mu,\nu),$  for each distribution of mass $\mu$ and $\nu$. This quantity is defined as the minimum of the value \eqref{bec0} among all vector valued flux $\phi $ satisfying the balance equation \eqref{div}. It characterizes the movement of mass from $\mu$ to $\nu.$  The concept of $I(\mu,\nu)  $ is reminiscent of optimal mass transportation, where the goal is to find the most efficient way to transport mass from one distribution to another while minimizing the total traffic  cost. Similarly, in our context, $I(\mu,\nu) $  can be interpreted as a transfer fee that captures the essential characteristics of the mass transfer from $\mu$ to $\nu$, providing insight into the optimal traffic  process.  	 Observe that equation \eqref{div} can alternatively be viewed as a   transport equation governing the transfer of mass $\mu$ into $\nu.$   This can be seen by introducing the following variables:
$$ V=- \phi  /\rho ,$$ 
where $$\rho(t) =  t\nu+(1-t)\mu ,\quad \hbox{ for any }t\in [0,1].$$
By adopting this formulation,   \eqref{div}   shares formally the same objective as a transport equation, 
$$\left\{  \begin{array}{ll} 
	\partial_t \rho + \nabla \cdot( \rho\: V)=0\quad &\hbox{ in } (0,1)\times \Omega\\  \\
	\rho(0)=\mu, \quad \rho(1)=\nu, \end{array}\right.  $$
which is to describe   the movement of a quantity, represented by $\rho$, across a given domain through the vector field  $V.$   Yet, this is just formal since the velocity $V$ may be here infinite in general.  See in this case that the total cost \eqref{bec0}  may be written as 
\begin{equation} \label{exprTO}
	\int_0^1\!\! \int_\Omega F(x, \rho(t,x)\: V(t,x))\:   d xdt  +	\int_0^1\!\! \int_{\Gamma_D} G(x,-\rho(t,x) V(t,x)\cdot \nu) \:  d\sigma  d t .
\end{equation}
While the two formulas \eqref{exprTO} and \eqref{BBF} are not the same, it is noteworthy that the expressions \eqref{exprTO} bears resemblance to the Wasserstein distance expression \eqref{BBF}. This similarity suggests that the approach we are developing here shares some of the characteristics of optimal transport, which is already more or less established for the $H^{-1}-$approach (cf. \cite{Vbook}). 
This observation motivates the use of the quantity $I$ to measure transition work in a steepest descent algorithm for minimizing internal energy. Another compelling reason to incorporate $I$ into the proximal energy minimization algorithm is that the transfer phenomenon we are addressing arises directly and instantaneously from diffusion phenomena with different boundary conditions. Indeed, formal duality arguments can demonstrate that the Euler-Lagrange equation associated with $I(\mu,\nu)$  is closely related to   elliptic problem : 
 \begin{equation}  \label{Eformal}
 	\left\{ \begin{array}{ll}
 		\left. \begin{array}{l}    	-\nabla \cdot \phi = \mu-\nu   \\  \\  
 			\phi \in \partial _\xi F^*(x,\nabla u)     \end{array} \right\}   \quad & \hbox{ in }\Omega 
 		\\  \\ 
 		\mu_{/\Gamma}-	\phi   \cdot \nu \in \partial G^*(x,u) &  \hbox{ on }\Gamma_D  	\\  \\ 
 		\phi   \cdot \nu =0 &  \hbox{ on }\partial \Omega \setminus \Gamma_D  , 
 	\end{array}\right. 
 \end{equation} 
 where $F^*$ and $G^*$ denote  the usual Legendre transforms of $F$ and $G$ respectively (see Section \ref{Sassumptions} for the definitions).

To simplify the presentation,  we focus in this paper on the case where   $G (x,-\phi    \cdot \nu) $ is proportional to $-\phi    \cdot \nu.$ More precisely, we assume that    
$$G (x,r)  = g (x)\: r,\quad \hbox{ for any } r\in \RR \hbox{ and a.e. }x\in   \Gamma_{D  },$$
for a given $g $ assumed to be the trace of a Sobolev function.    In some sense, $g $ represents some given charge when the mass $\rho $ moves  onto the boundary $\Gamma_{D }.$  As we will see,  the  boundary condition on $\Gamma_D$ simply turns into non-homogeneous Dirichlet boundary condition in this case. The right assumptions on  $\Omega,$ $\Gamma_D,$ $F$, $g$   rigorous definition of $I,$ and its connection with \eqref{Eformal} typed PDE     will be given in details in the next section.  

This being said,  we can consider a dynamic system in $\Omega$ with an internal energy $\int_\Omega \beta(.,\rho )\: dx,$ associated with   densities $\rho  .$     The proximal optimization algorithm aims to construct a sequence $\rho^i$  initialized  with a given  initial density $\rho_{0}$.  For  $i=0,1,...n,$  the algorithm  establishes $\rho^i$,   by minimizing the  problem: 
\begin{equation}\label{STdesc1}
	 \hbox{argmin}_{\rho } \left\{  \int_\Omega \beta(.,\rho )\: dx   + I^\tau  (\rho^{i-1},\rho)  \right\},
\end{equation}   where $\tau>0$  is a given time step and,      $I^\tau $ corresponds to the $\tau-$scaled transfer fees of   
 $\rho ^{i-1}$ into $\rho $ given through $I  (\rho^{i-1},\rho).$   It is constructed up to the minimum of the amount 
 \begin{equation} 
 	\int_\Omega F(., \phi )\: dx - \int_{\Gamma_D}   g\: \phi \cdot \nu  \: d\sigma ,
 \end{equation} 
 among the vector valued fluxes given by the balance PDE 
 \begin{equation} \label{Div2}
 	-\nabla \cdot \phi  = \rho^{i-1}-\rho  \quad \text{ in } \ \Omega.
 \end{equation} 
 Here, on the Dirichlet boundary $\Gamma_D,$ we'll  keep the normal trace of $\phi $ free to vary. And, we fix it on the remaining part $\partial\Omega\setminus \Gamma_D.$ This leads to Neumann boundary condition (not necessarily homogeneous) for the PDE \eqref{Div2} (see more details in Section \ref{SProximal}).

  \begin{remark}
  	We observe that the term $I  (\rho^{i-1},\rho)$  depends only on the quantity  $\rho^{i-1}-\rho$.   Without any ambiguity, we will consider $I  (\rho^{i-1},\rho)$ definitely as  $I  (\rho^{i-1}-\rho).$   Furthermore, we will utilize the balance equation \eqref{div} under general considerations, including non-homogeneous boundary conditions and dual Sobolev source terms. That is, we will consider equation \eqref{div} in its most general form to accommodate a wider range of practical applications, including the case of non-homogeneous boundary conditions and the case of a forcing term of the type $\nabla \cdot \chi$ (see the exact definition of $I$ in \eqref{Imu} and Remark \ref{Rforcing}).   
  \end{remark}
    
  	\section{Notations and assumptions} \label{SAssumptions}
  	\setcounter{equation}{0}

  	Assume that $\Omega$ is bounded regular domain with Lipschitz boundary $\partial \Omega$ satisfying 
  	$$\partial \Omega=\Gamma_{D}\cup\Gamma_N ,$$  
  	where $\Gamma_{D}$  and $ \Gamma_N$ are assumed to be  disjoint regular connected components. We assume moreover that 
  	$$\mathcal H^{N-1}(\Gamma_D)>0.  $$

  	\subsection{Reminder}  
  	
  Let  $1<  p< \infty$ and  $1<  p'< \infty$ be such that 
  $$\frac{1}{p}+ \frac{1}{p'} =1.  $$
  We focus in this paper on the case where $1<p<\infty.$

  \begin{itemize}
  	
  		\item 
  	$L^{ p}(\Omega)$ and $L^1(\Omega)$ denotes the usual Lebesgue  spaces endowed with their  natural norms.   Unless otherwise stated, and without abuse of notation, we will use $\int f$  to denote the Lebesgue integral over $\Omega,$ i.e. 
  	$$\int_\Omega  h := \int_\Omega h(x)\: dx, \quad \hbox{ for any }h\in \L^1(\Omega). $$

  	\item 
 $W^{1,p}(\Omega)$ denotes the usual Sobolev space endowed with its natural norm 
  	$$ \Vert z\Vert_{W^{1,p}(\Omega)} =  \left( \int_\Omega  \vert z\vert^p +\int_\Omega  \vert \nabla z\vert^p \right) ^{1/p}.$$
  	\item  $W^{1,p}_{\Gamma_D} (\Omega) $ is  the closure, in $W^{1,p}(\Omega)$,  of $\mathcal C^1(\overline \Omega)$ which are null on $\Gamma_D.$   In particular, for any 
  	$z\in W^{1,p}_{\Gamma_D} (\Omega),$  $\gamma(z)=0$ on $\Gamma_D,$  where   $\gamma $  denotes the usual trace application defined from $W^{1,p}(\Omega)\to L^p(\partial\Omega).$  
  	
  	\item  	We denote by $ W^{-1,p'}_{\Gamma_D}(\Omega)$, the dual space  of $W^{1,p}_{\Gamma_D}(\Omega).$ Thanks to Lemma \ref{LdualW}, we know that,  for any $f\in   W^{-1,p'}_{\Gamma_D}(\Omega),$ there exists a couple  $(f_0, \overline f)\in L^{p'}(\Omega)\times L^{p'}(\Omega) ^N,$ such that  
  	\begin{equation}\label{decompositionH}
  		\langle f,\xi\rangle_{W^{-1,p'}_{\Gamma_D}(\Omega),W^{1,p}_{\Gamma_D}(\Omega)}  = \int_\Omega f_0\:\xi  -   \int_\Omega \overline f\cdot \nabla \xi  ,\quad \hbox{ for any }\xi\in 
  		W^{1,p}_{\Gamma_D}(\Omega). 
  	\end{equation}
  	Without abusing, we will use the couple $(f_0,\overline f)$ to identify    $f\in 	 W^{-1,p'}_{\Gamma_D}(\Omega)$ and denote the duality bracket by $\langle .,.\rangle_{\Omega} $ ; i.e.  
  	$$\langle f,\xi\rangle_{\Omega}:= \langle f,\xi\rangle_{W^{-1,p'}_{\Gamma_D}(\Omega),W^{1,p}_{\Gamma_D}(\Omega)},\quad \forall \xi\in W^{1,p}_{\Gamma_D}(\Omega).$$ 
  	
  	  	\item   For any  $\tilde\Gamma \subset \partial\Omega ,$ we denote by $W^{1-1/p,p}(\tilde\Gamma):= \gamma_{/\tilde\Gamma}(W^{1,p}(\Omega)),$ where $ \gamma_{/\tilde\Gamma}$ is the trace application  restricted to $\tilde\Gamma.$  We  need to define moreover   
  	$$  W^{1-1/p,p}_{00}(\tilde\Gamma):=\Big\{  \kappa\in L^{p'}(\tilde\Gamma)\: :\:  \exists \: \tilde \kappa\in W^{1,p} (\Omega),\: \gamma(\tilde \kappa)=\kappa \hbox{ on }  \tilde\Gamma  \hbox{ and } \tilde \kappa=0 \hbox{ on }\partial \Omega \setminus \tilde \Gamma  \Big\}.$$
Thanks to the assumptions on $\partial\Omega,$ taking $\tilde \Gamma=\Gamma_D$ or $\tilde \Gamma_N,$ the space  $W^{1-1/p,p}_{00}(\tilde\Gamma)$  coincides with the set of     functions belonging to   $W^{1-1/p,p}(\tilde\Gamma)$   and vanishing  on the   boundary  $\partial \Omega\setminus \tilde \Gamma.$  Remember that this not automatically true for any $\tilde\Gamma \subset \partial\Omega $.   

    \item   For any  $\tilde\Gamma \subset \partial\Omega , $ 
  we  denote by $ W^{-1/{p'},{p'}}(\tilde\Gamma):= \left( W^{1-1/p,{p}}_{00}( \tilde\Gamma) \right)'  $,  the topological dual space of   $W^{1-1/p,p}_{00} ( \tilde\Gamma). $  We denote the duality bracket   simply by the    formal expression $\langle \pi ,\kappa\rangle_{\tilde \Gamma}$ ; i.e. \begin{equation} \label{notation1}
  \langle \pi ,\kappa\rangle_{\tilde \Gamma} :=  \langle\pi,\kappa \rangle_{ W^{-1/p',p'}( \tilde\Gamma),  W^{1-1/p,p}_{00}( \tilde\Gamma)} ,\quad \forall \kappa \in W^{1-1/p,p}_{00}( \tilde\Gamma) \hbox{ and }\pi \in W^{-1/{p'},{p'}}(\tilde\Gamma)   .
  	\end{equation}

    	\item  	We can define simultaneously the space  
  	$$ H^{p'}(div,\Omega):=\Big\{  \upsilon\in L^{p'}(\Omega)^N \: :\:   \nabla \cdot \upsilon  \in L^{p'}(\Omega)
  	\Big\} ,$$
  	where $\nabla \cdot \upsilon$ is taken in $\mathcal D'(\Omega).$    
  	The space $ H^{p'}(div,\Omega)$ endowed with the norm  
  	$$	\Vert \upsilon\Vert_{ H^{p'}(div,\Omega)}  :=  \Vert \upsilon\Vert _{L^{p'}(\Omega)}+ \Vert \nabla \cdot \upsilon\Vert _{L^{p'}(\Omega)}     $$ 
  	is a Banach space.

  	  	\item  Let $\upsilon\in H^{p'}(div,\Omega).$  For any  $\tilde\Gamma \subset \partial\Omega ,$ the normal trace  $\upsilon\cdot \nu$ is well defined on $\tilde\Gamma$ by duality.   More precisely,   $\upsilon\cdot \nu\in W^{-1/{p'},{p'}}(\tilde\Gamma) $,    and  
  	\begin{equation}\label{tracexpression}
  		\langle \upsilon \cdot \nu ,\kappa  \rangle_{  \tilde\Gamma}  =\int_\Omega\upsilon \cdot \nabla \tilde \kappa  \: dx +  \int_\Omega \tilde \kappa \: \nabla \cdot \upsilon\: dx, 
  	\end{equation}  
  	for any $\kappa\in  W^{1-1/p,p}(\tilde\Gamma)$ and   $\tilde \kappa \in  W^{1,p} (\Omega)$  such that  $\tilde \kappa =\kappa$ on $\tilde\Gamma  , $    and $\tilde \kappa =0$ on  $ \partial\Omega\setminus \tilde\Gamma .$ 
  
  	  \end{itemize}

  	\medskip 
  	
  	We conclude this reminder section    with the following result, which is relatively well-known in the field of elliptic PDEs. 
  	
  	\begin{lemma}\label{Fnonempty}
     For any  $\mu\in L^{p'}(\Omega),$ $\chi\in L^{p'}(\Omega)^N$and $\eta\in  W^{-1/{p'},{p'}}(\Gamma_N),$ there exists  $\phi \in  H^{p'}(div,\Omega)$ solution of the following PDE :
  		\begin{equation}\label{diveq}
  			\left\{  \begin{array}{ll}
  				-\nabla \cdot  \phi =\mu +\nabla \cdot \chi \quad &\hbox{ in }\Omega\\  \\ 
  				(\phi +\chi) \cdot \nu =\pi &\hbox{ on }\Gamma_N,
  			\end{array}\right.
  		\end{equation}
  		in the sense that 
  		\begin{equation}\label{soldiv}
  			\int_\Omega (\phi +\chi) \cdot \nabla \xi \: dx = \int_{\Omega }\mu\: \xi\: dx +\langle \pi, \xi\rangle_{\Gamma_N}  , \quad \hbox{ for any }\xi\in W^{1,p}_{\Gamma_D } (\Omega).
  		\end{equation}
  	\end{lemma}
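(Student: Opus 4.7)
The natural strategy is the direct method of the calculus of variations applied to a $p$-Laplacian-type auxiliary problem. Consider the functional
\begin{equation}
J(\xi)=\frac{1}{p}\int_\Omega |\nabla\xi|^p\,dx-\int_\Omega \mu\,\xi\,dx+\int_\Omega \chi\cdot\nabla\xi\,dx-\langle\pi,\xi\rangle_{\Gamma_N}
\end{equation}
defined on the reflexive Banach space $W^{1,p}_{\Gamma_D}(\Omega)$. First I would verify that $J$ is well defined, strictly convex and continuous on $W^{1,p}_{\Gamma_D}(\Omega)$. The $L^{p'}$--$L^p$ duality handles the $\mu$ and $\chi$ terms; for the boundary term I would use that any $\xi\in W^{1,p}_{\Gamma_D}(\Omega)$ satisfies $\gamma(\xi)=0$ on $\Gamma_D$, so $\gamma(\xi)_{|\Gamma_N}\in W^{1-1/p,p}_{00}(\Gamma_N)$ (this is precisely the role of the hypothesis that $\Gamma_D$ and $\Gamma_N$ are disjoint regular components), making $\langle\pi,\gamma(\xi)\rangle_{\Gamma_N}$ well defined and continuous in $\xi$.

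Next I would establish coercivity. Since $\mathcal{H}^{N-1}(\Gamma_D)>0$, a Poincaré-type inequality holds on $W^{1,p}_{\Gamma_D}(\Omega)$, namely $\Vert\xi\Vert_{W^{1,p}(\Omega)}\le C\Vert\nabla\xi\Vert_{L^p(\Omega)}$, so the first term dominates the linear terms and $J(\xi)\to+\infty$ as $\Vert\xi\Vert_{W^{1,p}(\Omega)}\to\infty$. Combined with convexity (hence weak lower semicontinuity) and reflexivity, the direct method yields a minimizer $u\in W^{1,p}_{\Gamma_D}(\Omega)$.

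The Euler--Lagrange condition at $u$ reads
\begin{equation}
\int_\Omega |\nabla u|^{p-2}\nabla u\cdot\nabla\xi\,dx+\int_\Omega \chi\cdot\nabla\xi\,dx=\int_\Omega \mu\,\xi\,dx+\langle\pi,\xi\rangle_{\Gamma_N},\qquad \forall\,\xi\in W^{1,p}_{\Gamma_D}(\Omega).
\end{equation}
Setting $\phi:=|\nabla u|^{p-2}\nabla u$, one has $|\phi|^{p'}=|\nabla u|^p\in L^1(\Omega)$, so $\phi\in L^{p'}(\Omega)^N$. Testing the identity against $\xi\in \mathcal{C}^\infty_c(\Omega)$ shows $-\nabla\cdot(\phi+\chi)=\mu$ in $\mathcal{D}'(\Omega)$; since $\mu\in L^{p'}(\Omega)$, it follows that $\phi+\chi\in H^{p'}(\mathrm{div},\Omega)$, and the normal trace $(\phi+\chi)\cdot\nu$ is therefore well defined as an element of $W^{-1/p',p'}(\partial\Omega)$ via \eqref{tracexpression}.

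Finally I would recover the non-homogeneous Neumann condition on $\Gamma_N$. Going back to the Euler--Lagrange identity with a general $\xi\in W^{1,p}_{\Gamma_D}(\Omega)$ and comparing with the integration-by-parts formula \eqref{tracexpression} applied to $\phi+\chi$, the bulk terms cancel and there remains $\langle(\phi+\chi)\cdot\nu,\gamma(\xi)\rangle_{\Gamma_N}=\langle\pi,\gamma(\xi)\rangle_{\Gamma_N}$ for every admissible trace $\gamma(\xi)_{|\Gamma_N}\in W^{1-1/p,p}_{00}(\Gamma_N)$, which identifies $(\phi+\chi)\cdot\nu=\pi$ in $W^{-1/p',p'}(\Gamma_N)$. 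The main technical point to keep an eye on is the boundary-trace bookkeeping: one must ensure the traces of test functions in $W^{1,p}_{\Gamma_D}(\Omega)$ really populate the predual $W^{1-1/p,p}_{00}(\Gamma_N)$ on which $\pi$ acts — this is exactly where the regularity assumption on $\partial\Omega=\Gamma_D\cup\Gamma_N$ is used, and no further density argument is needed since $\mathcal{C}^1(\overline\Omega)\cap\{\xi=0\text{ on }\Gamma_D\}$ is dense in $W^{1,p}_{\Gamma_D}(\Omega)$ by definition.
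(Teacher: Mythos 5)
Your proposal is correct and follows essentially the same route as the paper: minimize a $p$-Dirichlet functional over $W^{1,p}_{\Gamma_D}(\Omega)$ by the direct method and take $\phi=|\nabla u|^{p-2}\nabla u$ from the Euler--Lagrange equation. You supply the coercivity and trace details the paper leaves implicit, and your sign convention in the functional is the one that actually yields \eqref{soldiv} (the paper's displayed functional \eqref{otimp} has the signs of the $\mu$ and $\pi$ terms flipped relative to its own stated Euler--Lagrange identity).
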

  	\begin{proof}
  		It is enough to work with  the optimization problem 
  		\begin{equation}\label{otimp}
  			\min_{z\in W^{1,p}_{ \Gamma_D}(\Omega)} \left\{ \frac{1}{p} \int_\Omega \vert \nabla z\vert^p\: dx -\int_\Omega \chi\cdot \nabla z\: dx +\int_\Omega \mu\: z\: dx +\langle \pi, z\rangle_{\Gamma_N}  \right\}.
  		\end{equation}
  		Using standard argument of calculus of variation, one can prove that   \eqref{otimp} has a solution that we denote by $u_p.$ Moreover, we can prove that $u_p$ satisfies  
  		\begin{equation} 
  			\int_\Omega \vert \nabla u_p\vert^{p-2}\nabla  u_p \cdot \nabla \xi \: dx = \int_{\Omega }\mu\: \xi\: dx - \int_\Omega \chi\cdot \nabla \xi \: dx + \int_{\Gamma_N } [\pi, \xi], \quad \hbox{ for any }\xi\in W^{1,p}_{\Gamma_D } (\Omega).
  		\end{equation} 
  		Taking $\phi = \vert \nabla u_p\vert^{p-2}\nabla  u_p,$ the proof of the lemma is complete. 
  	\end{proof}

  	\begin{remark}
  		In terms of  $W^{-1,p'}_{\Gamma_D}(\Omega),$   the equation \eqref{diveq} is equivalent to write 
  			$-\nabla \cdot  \phi  =f$ in  $W^{-1,p'}_{\Gamma_D}(\Omega),$ where $f$ is given by 
  			\begin{equation} 
  			\langle f,\xi\rangle_{\Omega}  = \int_\Omega \mu \:\xi\: dx -   \int_\Omega \chi \cdot \nabla \xi  \: dx  + \langle \pi,\xi\rangle_{\Gamma_N} ,\quad \forall \xi\in 
  			W^{1,p}_{\Gamma_D}(\Omega). 
  		\end{equation}
  	\end{remark}

  	\subsection{Assumptions} \label{Sassumptions}
  	
  We consider the applications  $F  \: :\:  \Omega\times \RR^N \to \RR $  and $\beta\: :\:  \Omega\times \RR  \to \RR $  to be  two 
  	 Caratheodory applications ;  i.e. the applications  $ (x,\xi)\in\Omega\times  \RR^N \to F  (x,\xi)  \in \RR  $ and $ (x,r)\in\Omega\times  \RR  \to \beta  (x,r)  \in \RR  $   are measurable with respect to $x\in \Omega$, and continuous with respect to $\xi\in \RR^N $  and $r\in \RR,$ respectively. We assume moreover that    
  	\begin{itemize} 
  		
  		\item [(H1)]     there exists two constants $C_1,\: C_2 >0,$    such that 
  		
  		$$C_1\: \Vert \xi\Vert^{p'} \leq F  (x,\xi) \leq C_2\: \Vert \xi\Vert ^{p'},\quad \hbox{ for any } \xi\in \RR^N,$$
  		
  		\item [(H2)]    the application   $ \xi\in \RR^N \to F  (x,\xi)  \in \RR$ is    lower semi-continuous  (l.s.c. for short) and convex, for a.e. $x\in \Omega,$  	  
  		  		
  		\item [(H3)]     for a.e. $x\in \Omega$  and  for a.e. $x\in \Omega$ ,  the application 
  		$$ r \in \RR \to \beta(x,r )   \hbox{ is l.s.c., convex and }\beta(x,0)=0, $$  	
  		 		
  		\item [(H4)]    there exists $C_3,\: M>0$ such that 
  		$$C_3\: (\vert r\vert -M)^{+p'}\leq \beta(x,r),\quad \hbox{ for any }r\in \RR \hbox{ and a.e. }x\in \Omega.$$   
  		
  	\end{itemize}    
  	
  	Under the  assumptions (H1)-(H4), we can define the Legendre transforms of $F$ and $\beta$  in the second variable as usual by 
  	$$F^*(x,\xi^*)= \max_{\xi\in \RR^N} \Big\{ \xi^*\cdot \xi -F(x,\xi)\Big\} ,\hbox{ for a.e. }x\in \Omega\hbox{ and }\xi^*\in \RR^N  $$
  	and 
  		$$\beta^*(x,r^*)= \max_{r\in \RR } \Big\{ r^*\cdot r -\beta(x,r)\Big\} ,\hbox{ for a.e. }x\in \Omega\hbox{ and }r^*\in \RR. $$
  	Let us remind the reader here that  $F^*$ and $\beta^*$ are again l.s.c. and convex with respect to the second variable. Moreover, 
  	$$F^*(x,\xi^*)=   \xi^*\cdot \xi -F(x,\xi)\quad \Leftrightarrow \quad \xi^*\in  \partial_\xi F(x,\xi)$$ and 
  $$\beta^*(x,r^*)\in  r^*\cdot r -\beta(x,r) \quad \Leftrightarrow \quad  r^* \in \partial_{r}\beta(x,r).$$  These are also  equivalent  to 
  $\xi\in  \partial_{\xi^*} F^*(x,\xi^*)$ and $r \in \partial_{r^*}\beta^*(x,r^*),$ respectively.   Moreover, thanks to the assumptions $(H1),$   it is not difficult to see that 	
  \begin{itemize}  
  	\item [(H'1)]     there exists two constants $C'_1,\: C'_2 >0,$    such that 
  \end{itemize}  
  $$C'_1\: \Vert \xi^* \Vert^{p} \leq F^*  (x,\xi^* ) \leq C'_2\: \Vert \xi^* \Vert ^{p},\quad \hbox{ for any } \xi^* \in \RR^N\hbox{ and  a.e. }x\in \Omega. $$ 
 For the main facts and classical results about conjugate convex functions, one can see for instance the book \cite{Ekeland}.

  		\begin{remark}
  	  The  results of the paper remains to be true if we replace the assumption   $(H3)$ by  the following one 
  			$$C_1\: \Vert \xi\Vert^{p'} - c_1(x)\leq F  (x,\xi) \leq C_2\: \Vert \xi\Vert ^{p'} + c_2(x),\quad \hbox{ for any } \xi\in \RR^N, \hbox{ and  a.e. }x\in \Omega,$$
  			where  $c_1$ and $c_2$ are two $L^1$ functions on $\Omega.$   
  			Also, they remains to be true   if we replace the assumption $(H4)$ by 
  			$$C_3\: (\vert r\vert -M)^{+p'}- c_3(x)\leq \beta(r),\quad \hbox{ for any }r\in \RR \hbox{ and  a.e. }x\in \Omega,$$    
  			where  $c_3$ is a  $L^1$ function on $\Omega.$
  						
  		  \end{remark}

  	\section{Proximal  energy minimization}\label{SProximal}
  	\setcounter{equation}{0}

  	\subsection{Main results}
  	Let   $1<p<\infty$ 
  	\begin{equation} \label{hypggamma}
  		\pi  \in  W^{-\frac{1}{p'},p'}(\Gamma_N) \quad \hbox{ and }\quad g \in W ^{1-\frac{1}{p},p}_{00}(\Gamma_D)   \end{equation} 
  	be given and fixed throughout this paper.    We denote by $\tilde g $ the  $W^{1,p} (\Omega)$ function such that   $\tilde g =0$ on $\Gamma_N$  and    $\tilde g =g $ on $\Gamma_D .$   To define the transition work   in \eqref{STdesc1} rigorously, 	 for any $\mu \in L^{p'}(\Omega)$  and $\chi   \in  \left(L^{p'}(\Omega)\right)^N$,  we consider   
  	$$ \A_{\pi}^\chi(\mu)= \Big\{  \phi   \in    L^{p'}(\Omega)^N,\: - \nabla \cdot \phi  =\mu +\nabla \cdot \chi  \hbox{ in }\Omega \hbox{ and }   (\phi  +\chi)\cdot \nu =\pi  \hbox{ on }\Gamma_{N }    \Big\}.  $$
  	That is $  \phi     \in \A_{\pi}^\chi(\mu,h)$ if and only if $\phi  \in   L^{p'}(\Omega)^N$ and 
  	$$\int_\Omega\phi \cdot \nabla \xi \: dx =\int_\Omega \mu\:  \xi  \: dx -\int_\Omega \chi\cdot \nabla \xi\: dx +\langle \pi,\xi\rangle_{\Gamma_N}\quad \hbox{ for any }\xi\in W^{1,p}_{\Gamma_{D  }}(\Omega). $$
  	  Thanks to Lemma \ref{Fnonempty}, under the assumptions \eqref{hypggamma},  we have 
		$$  \A_{\pi}^\chi(\mu)  \neq\emptyset,\quad \hbox{ for any }\mu\in  L^{p'}(\Omega)  \hbox{ and }\chi\in   L^{p'}(\Omega)^N  . $$  
 Now, for any  $\mu \in  L^{p'}(\Omega) $ and $\chi  \in   L^{p'}(\Omega)^N  ,$  fixed,  we  define transition work as follows 
  \begin{equation}\label{Imu}
  	\I_{\pi,g}^\chi( \mu) := 	\inf_{\phi}\left \{   \int_\Omega F  (x,  \phi  (x) )\:  d x -\langle (\phi+\chi)\cdot \nu, g\rangle_{\Gamma_D} \:  :\:    \phi\in  \A_{\pi}^\chi(\mu)     \right\}.
  \end{equation}

  	\begin{remark}  \label{Rforcing}
  		 As we said in Section 2.2, the quantity  $ \I_{\pi,g}^\chi( \mu) $ may be considered as  the transition work associated  with the distribution of mass  $\mu$. However, revisiting the formulation \eqref{div} we note that in the definitions of $\A$ and $\I$, the source term of \eqref{div} is generalized to a more comprehensive form, $\mu +\nabla \cdot \chi .$
  		  Indeed,   the term $\nabla \cdot \chi $  can be utilized to represent a significant external influence exerted on the system by an external force, $\chi$ (along with boundary conditions).  In some sense   $ \I_{\pi,g}^\chi( \mu) $ may be  interpreted as a \emph{transition work balancing the distribution  $\mu$ under   external force $\chi$}. 
  		 The main future of this interpretation   is to encompass a broad spectrum of applications, including PDEs of the type \eqref{E0} with a fixed forcing term incorporated into the source term, $f.$ Moreover, it can handle also PDEs of the type \eqref{EDT}, in which the forcing term is represented by a transport phenomenon clearly identified in the first-order term, $\nabla \cdot (\rho\: V).$


  \end{remark}

 Then, to solve the steepest descent algorithm associated with transition work $\I_{\pi,g}^\chi$  and the  internal energy  $\int_\Omega   \beta(.,\rho),$    we consider the    proximal  energy minimization problem 
   \begin{equation}\label{Nmu}
   	 \N_{\pi,g}^\chi (\mu)   :=    	 	\inf_{\rho}\left \{\int_\Omega   \beta(.,\rho) +  \I_{\pi,g}^\chi( \mu-\rho) ,\: \rho\in  L^{p'}(\Omega) \right\}      ; 
   \end{equation}
i.e. 
  $$  \N_{\pi,g}^\chi (\mu)   :=    	 	\inf_{\rho,\phi }\left \{\int_\Omega   \beta(.,\rho) +  \int_\Omega F(.,\phi ) \: dx -  \langle (\phi +\chi)\cdot  \nu, g\rangle_{\Gamma_D}  \: :\:     \phi  \in  \A_{\pi}^\chi(\mu-\rho) ,\: \rho\in  L^{p'}(\Omega) \right\}.  $$    
  See that  all the terms in $   \N_{\pi,g}^\chi (\mu)  $ are well defined. Indeed, for any $  \phi  \in  \A_{\pi}^\chi(\mu-\rho) ,$ we have $\phi  +\chi \in H^{p'}(div, \Omega),$     so that   the normal trace of $\phi  +\chi $ is well defined on $\Gamma .$   Remember that  $ \langle (\phi +\chi)\cdot  \nu, g\rangle_{\Gamma_D}  $    needs  to be understood  in the sense of 
  \begin{eqnarray*}
  \langle (\phi +\chi)\cdot  \nu, g\rangle_{\Gamma_D}  
  	&=&  \int_\Omega(\phi +\chi)  \cdot \nabla \tilde g   \: dx +  \int_\Omega \tilde g  \: \nabla \cdot (\phi +\chi) \: dx .   
  \end{eqnarray*}

  \begin{remark}
  	
  	The optimization problem  $ \N_{\pi,g}^\chi( \mu) $  seeks to identify the proximal state, $\rho$ for    $\mu,$  relative to the total energy,  which simultaneously minimizes the internal energy $\int_\Omega \beta(x,\rho)\: dx$  and the transition work $ \I_{\pi,g}^\chi$ required to reach a proximal state of $\mu$  driven by  the forcing term $\chi$. Notably, no assumption is made regarding the signs of either $\mu$ or $\rho$.  Nevertheless, practical considerations may necessitate imposing sign constraints on $\rho.$  This  must be carefully considered in subsequent analysis and interpretation in terms of   PDE (see Remark \ref{Rsigne}).

  \end{remark}

  \bigskip 
 Our first main result of this section is the following characterization in terms of PDE of the solution to \eqref{Imu}.

 \begin{theorem}	\label{tduality1}
 	For any $\mu \in  L^{p'}(\Omega) $ and $\chi \in   L^{p'}(\Omega)^N  ,$   we have  :  
 	\begin{equation}\label{duality1}
 		\I_{\pi,g}^\chi( \mu)  = 	\underbrace{ \max_{\eta\in W ^{1,p}(\Omega) ,\: \eta_{/\Gamma_D}=g} \left\{  \int_\Omega  \mu\:\eta-\int_\Omega \chi\cdot \nabla\eta  - \int_\Omega F^*(.,\nabla\eta)    + \langle \pi,\eta\rangle_{\Gamma_N}  \right\}}_{ =:\J_{\pi,g}^\chi( \mu) } . 
 	\end{equation}  
 	Moreover, \begin{enumerate}
 		\item 	$\I_{\pi,g}^\chi( \mu) $ has a solution $\phi  \in  L^{p'}(\Omega) ^N.$ 
 		\item $\phi $ and $\eta$   are solutions of  $   \I_{\pi,g}^\chi( \mu)  $ and $ \J_{\pi,g}^\chi( \mu)  ,$ respectively,  if and only if     they satisfies the following   PDE :  
 	\begin{equation}
 		\label{PDE1}
 		\left\{\begin{array}{ll} 
 			\left.  \begin{array}{ll} 
 				 - \: \nabla\cdot \phi    =  \mu   +\nabla \cdot \chi   \\ 
 				\\  		 \phi    =  \partial_\xi F  ^*(., \nabla \eta )    \\   \\ 	 
 			\end{array}\right\} 	\quad & \hbox{ in } \Omega, \\  
 			(\phi   +\chi )  \cdot \nu  =  \pi    & \hbox{ on }\Gamma_{N  }\\  \\
 			\eta   =  g   \quad    & \hbox{ on }\Gamma_{D  }, 
 		\end{array}
 		\right. 	 
 	\end{equation} 
 	in the sense that   $\eta \in W^{1,p} (\Omega),$  $\eta =g $ on $\Gamma_{D  },$    	$\phi   \in L^{p'}(\Omega)^N,$  $\phi   \in \partial   F^*  (.,\nabla \eta )$ a.e. in $\Omega$,  and we have  $$  \int_\Omega \phi   \cdot \nabla \xi  \: dx=\int_\Omega \mu  \: \xi \: dx  -\int_\Omega \chi\cdot \nabla \xi  \: dx +\langle \pi , \xi \rangle_{\Gamma_N}   ,\quad \forall \xi\in W ^{1,p}_{\Gamma_{D} } (\Omega)   .$$   
   \end{enumerate} 
 \end{theorem}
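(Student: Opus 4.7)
The plan is to obtain $\I = \J$ by Fenchel-Rockafellar duality, and then read off the existence of a minimizer for $\I$ and the PDE \eqref{PDE1} from the extremality relations. The natural reformulation lifts the Dirichlet datum: setting $\eta = \zeta + \tilde g$ with $\zeta \in W^{1,p}_{\Gamma_D}(\Omega)$ removes the constraint $\eta = g$ on $\Gamma_D$, and since $\tilde g = 0$ on $\Gamma_N$ we have $\langle\pi,\eta\rangle_{\Gamma_N} = \langle\pi,\zeta\rangle_{\Gamma_N}$. Thus $\J = -\inf_{\zeta\in W^{1,p}_{\Gamma_D}(\Omega)}[\Phi(\zeta) + \Psi(\nabla\zeta)]$, where $\Phi(\zeta) := -\int_\Omega \mu\,\zeta - \langle\pi,\zeta\rangle_{\Gamma_N} - \int_\Omega \mu\tilde g$ is continuous affine, and $\Psi(v) := \int_\Omega F^*(\cdot, v+\nabla\tilde g) + \int_\Omega \chi\cdot(v+\nabla\tilde g)$ is convex and continuous on $L^p(\Omega)^N$ by the growth bound $(H'1)$.

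First I would prove the easy inequality $\I\geq\J$. Young's inequality gives $\phi\cdot\nabla\eta \leq F(x,\phi) + F^*(x,\nabla\eta)$ pointwise. Given $\phi\in\A_\pi^\chi(\mu)$ and admissible $\eta$, I would split $\eta = (\eta-\tilde g) + \tilde g$, insert $\eta - \tilde g \in W^{1,p}_{\Gamma_D}(\Omega)$ as a test function in the variational definition of $\A_\pi^\chi(\mu)$, and apply the integration-by-parts formula \eqref{tracexpression} to $\int(\phi+\chi)\cdot\nabla\tilde g$ (admissible because $\phi+\chi\in H^{p'}(\mathrm{div},\Omega)$ with $\nabla\cdot(\phi+\chi) = -\mu$ and $\tilde g$ has the right traces). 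Bookkeeping the traces yields $\int\phi\cdot\nabla\eta = \int\mu\eta - \int\chi\cdot\nabla\eta + \langle\pi,\eta\rangle_{\Gamma_N} + \langle(\phi+\chi)\cdot\nu, g\rangle_{\Gamma_D}$, and combining with Young and taking $\inf_\phi$, $\sup_\eta$ gives $\I\geq\J$.

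The reverse inequality, existence, and the PDE all come from Fenchel-Rockafellar applied to $\inf_\zeta[\Phi(\zeta) + \Psi(\nabla\zeta)]$. The qualification condition is immediate since $\Psi$ is everywhere continuous on $L^p(\Omega)^N$ and $\Phi$ is finite, so the dual is attained by some $\phi^\star\in L^{p'}(\Omega)^N$. A direct Legendre-transform calculation shows that $\Phi^*(-\nabla^*\phi)$ is finite (with value $\int\mu\tilde g$) precisely when $\int\phi\cdot\nabla\zeta = \int\mu\zeta + \langle\pi,\zeta\rangle_{\Gamma_N}$ for every $\zeta\in W^{1,p}_{\Gamma_D}(\Omega)$, which via the translation $\tilde\phi = \phi - \chi$ is exactly $\tilde\phi\in\A_\pi^\chi(\mu)$; and the change of variable $u=v+\nabla\tilde g$ together with $(F^*)^* = F$ yields $\Psi^*(\phi) = -\int\phi\cdot\nabla\tilde g + \int F(x,\phi-\chi)$. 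Reusing \eqref{tracexpression} once more to collapse $\int\mu\tilde g - \int(\tilde\phi+\chi)\cdot\nabla\tilde g$ into $-\langle(\tilde\phi+\chi)\cdot\nu, g\rangle_{\Gamma_D}$, the dual value identifies with $\I_{\pi,g}^\chi(\mu)$, proving $\J\geq\I$ and existence of a minimizer. The PDE \eqref{PDE1} then reads off the extremality conditions: $\tilde\phi\in\A_\pi^\chi(\mu)$ encodes the divergence equation and the Neumann trace on $\Gamma_N$, $\eta = g$ on $\Gamma_D$ is the Dirichlet datum, and equality in Young's inequality becomes $\tilde\phi\in\partial_\xi F^*(x,\nabla\eta)$ a.e. The main technical delicacy will be the trace bookkeeping — correctly pairing $\langle\pi,\cdot\rangle_{\Gamma_N}$ with traces in $W^{1-1/p,p}_{00}(\Gamma_N)$, and expanding $\langle(\phi+\chi)\cdot\nu, g\rangle_{\Gamma_D}$ via the lift $\tilde g$ and \eqref{tracexpression} — so that all the Fenchel-Rockafellar terms land cleanly in the correct dual spaces.
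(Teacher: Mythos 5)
Your proposal is correct, and it reaches the duality by a genuinely different route from the paper. The paper perturbs the flux problem itself: it introduces the value function $K_0(f)$ obtained by perturbing the source of the balance equation by $f\in W^{-1,p'}_{\Gamma_D}(\Omega)$, proves in a separate lemma that $K_0$ is convex and lower semicontinuous (which requires extracting weakly convergent minimizing sequences and passing to the limit in the constraint), invokes $K_0(0)=K_0^{**}(0)$, and then computes $K_0^*$ by hand to recognize $\J_{\pi,g}^\chi(\mu)$; existence of an optimal flux is proved separately by the direct method. You instead apply Fenchel--Rockafellar to the potential side, writing $\J_{\pi,g}^\chi(\mu)=-\inf_{\zeta\in W^{1,p}_{\Gamma_D}(\Omega)}[\Phi(\zeta)+\Psi(\nabla\zeta)]$ after lifting the Dirichlet datum by $\tilde g$; since $(H'1)$ makes $\Psi$ finite and continuous on all of $L^p(\Omega)^N$, the constraint qualification is automatic, no lower semicontinuity lemma is needed, and the attainment of the dual problem --- which your computation of $\Phi^*$ and $\Psi^*$ correctly identifies with $\I_{\pi,g}^\chi(\mu)$ after the translation $\tilde\phi=\phi-\chi$ and one application of \eqref{tracexpression} --- delivers item~1 for free. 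The trade-off is that your argument does not by itself give attainment of $\J_{\pi,g}^\chi(\mu)$ (the paper gets this from its existence lemma; you would add a short direct-method argument using $(H'1)$ and Poincar\'e if you want the ``only if'' direction of item~2 to be non-vacuous). The derivation of the optimality system \eqref{PDE1} --- value equality plus the admissibility identity $\int_\Omega\phi\cdot\nabla\eta=\int_\Omega\mu\,\eta-\int_\Omega\chi\cdot\nabla\eta+\langle\pi,\eta\rangle_{\Gamma_N}+\langle(\phi+\chi)\cdot\nu,g\rangle_{\Gamma_D}$ plus saturation of Young's inequality --- is the same in both treatments, and your easy inequality $\I\ge\J$ coincides with the paper's converse step.
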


 \bigskip 
As to  \eqref{Nmu}, we have 
  
  \begin{theorem}	\label{tduality2}
  	For any $\mu \in  L^{p'}(\Omega) $ and $\chi \in   L^{p'}(\Omega)^N  ,$   we have  :  
  	\begin{equation}
  		\N_{\pi,g}^\chi (\mu)  = 	\underbrace{ \max_{\eta\in W ^{1,p}(\Omega) ,\: \eta_{/\Gamma_D}=g} \left\{  \int_\Omega  \mu\:\eta-\int_\Omega \chi\cdot \nabla\eta  - \int_\Omega F^*(.,\nabla\eta)   -\int_{\Omega } \beta^*(.,\eta)   + \langle \pi,\eta\rangle_{\Gamma_N}  \right\}}_{ =:D_{\pi,g}^\chi( \mu) } . 
  	\end{equation}  
  	Moreover,  \begin{enumerate}
  	 		\item 	$\N_{\pi,g}^\chi (\mu)  $ has a solution $(\rho,\phi )   \in  L^{p'}(\Omega) \times  L^{p'}(\Omega)^N.$  
  	 		\item 	$(\rho,\phi )$ and $\eta$ are solutions of  $   \N_{\pi,g}^\chi (\mu)  $ and $ D_{\pi,g}^\chi( \mu)  ,$ respectively,  if and only if     they satisfies the following   PDE :  
  	\begin{equation}
  		\label{PDE2}
  		\left\{\begin{array}{ll} 
  			\left.  \begin{array}{ll} 
  				\rho  - \: \nabla\cdot \phi    =  \mu   +\nabla \cdot \chi  ,    \\ 
  				\\  				\rho \in \partial \beta^*(x,\eta  ),\:   \phi    =  \partial_\xi F  ^*(., \nabla \eta )    \\   \\ 	 
  			\end{array}\right\} 	\quad & \hbox{ in } \Omega, \\ 
  			(\phi   +\chi )  \cdot \nu  =  \pi    & \hbox{ on }\Gamma_{N  }\\  \\
  			\eta   =  g   \quad    & \hbox{ on }\Gamma_{D  }
  		\end{array}
  		\right. 	 
  	\end{equation} 
  	in the sense that $\rho  \in L^{p'}(\Omega),$  $\eta \in W^{1,p} (\Omega),$    $\phi   \in L^{p'}(\Omega)^N,$  $\eta =g $ on $\Gamma_{D  },$ $\phi   \in \partial  F^*  (.,\nabla \eta ),$       $\rho  \in \partial\beta^* (x, \eta) ,$   a.e. in $\Omega,$ and  
    		 $$\int_\Omega  \rho  \: \xi   + \int_\Omega( \phi + \chi)   \cdot \nabla \xi  =\int_\Omega \mu  \: \xi    +\langle \pi , \xi \rangle_{\Gamma_N} ,\quad \forall \xi\in W ^{1,p}_{\Gamma_{D} } (\Omega)   .$$
  	   \end{enumerate} 
  \end{theorem}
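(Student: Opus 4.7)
My plan is to derive Theorem \ref{tduality2} directly from Theorem \ref{tduality1} by incorporating the minimisation in $\rho$ and then transposing it through the Legendre transform to recover $\beta^*$. Concretely, since for every $\rho \in L^{p'}(\Omega)$ the quantity $\I_{\pi,g}^\chi(\mu-\rho)$ equals $\J_{\pi,g}^\chi(\mu-\rho)$ by Theorem \ref{tduality1}, I would write
\begin{equation}
\N_{\pi,g}^\chi(\mu) = \inf_{\rho \in L^{p'}(\Omega)} \sup_{\eta \in K_g} \Lambda(\rho,\eta),
\end{equation}
with $K_g := \{\eta \in W^{1,p}(\Omega) : \eta_{/\Gamma_D}=g\}$ and
\begin{equation}
\Lambda(\rho,\eta) := \int_\Omega \beta(\cdot,\rho) + \int_\Omega (\mu-\rho)\eta - \int_\Omega \chi\cdot\nabla\eta - \int_\Omega F^*(\cdot,\nabla\eta) + \langle \pi,\eta\rangle_{\Gamma_N}.
\end{equation}
Since $\Lambda$ is convex in $\rho$ and affine (hence concave) in $\eta$, a minimax argument (Sion/Fan) should allow me to swap $\inf_\rho$ and $\sup_\eta$. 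The inner minimisation then reduces, via the definition of $\beta^*$, to $\inf_\rho \int_\Omega[\beta(\cdot,\rho)-\rho\eta] = -\int_\Omega \beta^*(\cdot,\eta)$, producing exactly $D_{\pi,g}^\chi(\mu)$.

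For existence, I would use the direct method on the primal problem. A minimising sequence $(\rho_n,\phi_n)$ with $\phi_n \in \A_\pi^\chi(\mu-\rho_n)$ stays bounded: (H1) controls $\phi_n$ in $L^{p'}(\Omega)^N$, and (H4) controls $(|\rho_n|-M)^+$ in $L^{p'}(\Omega)$, hence $\rho_n$ itself in $L^{p'}$. Extracting weak limits $(\rho,\phi)$, the balance equation in $\A_\pi^\chi$ passes to the limit because it is linear and tested against $W^{1,p}_{\Gamma_D}(\Omega)$-functions, and the normal trace term $\langle(\phi_n+\chi)\cdot\nu,g\rangle_{\Gamma_D}$ is stable by the integration-by-parts formula \eqref{tracexpression} applied with $\tilde g$. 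Convexity of $\beta$ and $F$ in the second variable gives weak lower semicontinuity, so $(\rho,\phi)$ is a minimiser.

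The PDE characterisation of optimality then comes from the equality $\N_{\pi,g}^\chi(\mu) = D_{\pi,g}^\chi(\mu)$. For an optimal triple $(\rho,\phi,\eta)$, plugging $\eta$ into the constraint $\rho - \nabla\cdot\phi = \mu + \nabla\cdot\chi$ tested with $\eta - \tilde g \in W^{1,p}_{\Gamma_D}(\Omega)$ and using \eqref{tracexpression} to handle $\langle(\phi+\chi)\cdot\nu,g\rangle_{\Gamma_D}$, the primal-dual equality collapses to
\begin{equation}
\int_\Omega \bigl[\beta(\cdot,\rho)+\beta^*(\cdot,\eta)-\rho\,\eta\bigr] + \int_\Omega \bigl[F(\cdot,\phi)+F^*(\cdot,\nabla\eta)-\phi\cdot\nabla\eta\bigr] = 0.
\end{equation}
Both integrands are nonnegative by Fenchel--Young, hence vanish a.e., which is exactly $\rho \in \partial \beta^*(\cdot,\eta)$ and $\phi \in \partial_\xi F^*(\cdot,\nabla\eta)$. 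The converse direction (any $(\rho,\phi,\eta)$ solving \eqref{PDE2} are optimisers) follows by reversing the integration by parts and using the equality cases of Fenchel--Young.

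The main obstacle I anticipate is the rigorous justification of the inf--sup interchange, since the $\eta$-variable has no obvious compactness in $K_g$. I would either verify the hypotheses of a minimax theorem by exploiting the coercivity of $F^*$ from (H'1) (which produces compact sublevel sets in $W^{1,p}$ modulo constants and the Dirichlet constraint pins down the constant), or else bypass the swap entirely by setting up Fenchel--Rockafellar duality on $L^{p'}(\Omega)\times H^{p'}(\mathrm{div},\Omega)$ with $\eta$ appearing as the Lagrange multiplier of the linear constraint $\rho-\nabla\cdot\phi = \mu+\nabla\cdot\chi$ together with the Neumann trace $(\phi+\chi)\cdot\nu = \pi$; the boundary term $\langle(\phi+\chi)\cdot\nu,g\rangle_{\Gamma_D}$ is then exactly what forces the Dirichlet condition $\eta = g$ on $\Gamma_D$ in the dual.
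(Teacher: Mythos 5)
Your proposal is correct, and for the central duality identity it takes a genuinely different route from the paper. The paper proves $\N_{\pi,g}^\chi(\mu)=D_{\pi,g}^\chi(\mu)$ by introducing the perturbed value function $K_1(f)$, $f\in W^{-1,p'}_{\Gamma_D}(\Omega)$, showing it is convex and l.s.c.\ (Lemma \ref{Kconvecl.s.c.}) so that $-K_1(0)=\min_\eta K_1^*(\eta)$, and then computing $K_1^*$ explicitly; $F^*$ and $\beta^*$ appear when the suprema over $\phi$ and over $\rho$ are evaluated inside $K_1^*$. You instead recycle Theorem \ref{tduality1} to write $\N_{\pi,g}^\chi(\mu)=\inf_\rho\sup_{\eta\in K_g}\Lambda(\rho,\eta)$ and interchange the inf and the sup by a minimax theorem, the inner $\inf_\rho$ then producing $-\int_\Omega\beta^*(.,\eta)$. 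This is legitimate: $\Lambda$ is convex in $\rho$ and concave in $\eta$ (not affine, as you write --- the term $-\int_\Omega F^*(.,\nabla\eta)$ is precisely where both the concavity and the coercivity come from), and the coercivity hypotheses of the Ekeland--T\'emam saddle-point theorem are exactly what you identify: $(H'1)$ together with Poincar\'e on $K_g-\tilde g\subset W^{1,p}_{\Gamma_D}(\Omega)$ gives $\Lambda(\mu,\eta)\to-\infty$ as $\Vert\eta\Vert_{W^{1,p}}\to\infty$, while $(H4)$ gives $\Lambda(\rho,\tilde g)\to+\infty$. The one step you should make explicit is the interchange $\inf_{\rho\in L^{p'}(\Omega)}\int_\Omega[\beta(.,\rho)-\rho\,\eta]=-\int_\Omega\beta^*(.,\eta)$, a standard normal-integrand/measurable-selection fact that requires $\beta^*(.,\eta)\in L^1(\Omega)$, which $(H4)$ guarantees for $\eta\in L^p(\Omega)$. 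Your route buys a shorter argument that reuses Theorem \ref{tduality1} and yields attainment of the dual maximum for free from the saddle point; the paper's route treats $\I_{\pi,g}^\chi$ and $\N_{\pi,g}^\chi$ uniformly through the parameter $\alpha\in\{0,1\}$ and avoids any minimax theorem at the cost of verifying convexity and lower semicontinuity of $K_\alpha$. Your existence argument (direct method, with the trace term controlled through \eqref{tracexpression} and Young's inequality) and your optimality characterization (the primal--dual equality collapsing to the sum of two nonnegative Fenchel--Young gaps, which therefore vanish a.e.) coincide with the paper's Lemma \ref{Pexistprimal} and with the concluding part of its proof of Theorem \ref{tduality2}.
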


  \bigskip

  	  \begin{remark}\label{Rsigne}
  \begin{enumerate}
  	\item 
  We do not treat   uniqueness concerns  of the solution for the problem  	\eqref{PDE2}. However, one sees that primal   issues in this direction can be deduced by using the optimization problems $\N_{\pi,g}^\chi,$ $\J_{\pi,g}^\chi$  and strict convexity of  $\beta$, $F,$ $\beta^*$ and/or $F^*.$ 
  	 
  	   \item  	In certain practical situations, the requirement that the density $\rho$  in the proximal energy optimization problem  $\N_{\pi,g}^\chi$ remains non-negative arises. The feasibility of this constraint hinges on the specific conditions imposed on the data $\mu,$ $\chi,$ $g$ and $\pi$.  	For instance, if $g\equiv 0,$ $\mu+\nabla \cdot \chi \geq 0$ and $\pi\geq 0,$  it is possible to prove that the solution $\rho$  to the problem $\N_{\pi,0} (\mu)  $ is conceivable non-negative, by utilizing the associated PDE \eqref{PDE2}.
  	 In the absence of these conditions, one must explicitly incorporate the constraint $\rho\geq 0$  into the optimization problem $\N  $, thereby necessitating the introduction of Lagrangian multipliers.
  	 
  	   \end{enumerate}	 
  	 \end{remark} 
  	
  The main future of the PDE \eqref{PDE1} deserves the  evolution dynamic of the type 
  \begin{equation} 
  	\left\{\begin{array}{ll} 
  		\left.  \begin{array}{ll} 
  			\partial_t\rho  - \nabla\cdot (\phi  - V(x,\rho,\eta))  =  \mu   ,\quad   \\ 
  			\\  				\rho \in \partial \beta^*(x,\eta  ),\quad  \phi    =  \partial_\xi F  ^*(., \nabla \eta )  \\   \\ 	 
  		\end{array}\right\} 	\quad & \hbox{ in } \Omega, \\  \\
  		(\phi   -V )  \cdot \nu  =  \pi    & \hbox{ on }\Gamma_{N  }\\  \\
  		\eta   =  g   \quad    & \hbox{ on }\Gamma_{D  }\\  \\ 
  		\rho(0)=\rho_0
  	\end{array}
  	\right. 	 
  \end{equation}  
  In Section 5, we show how to use this analysis to solve this kind of problems     in the case where 
  $$V(x,\rho,\eta) =\rho\: V,$$ 
  where $V$ is a given vector field.

  \subsection{Proofs of Theorem \ref{tduality1} and Theorem \ref{tduality2}}\label{Sproofstat}

  The proofs of   Theorem \ref{tduality1} and Theorem \ref{tduality2} are connected. To simplify the presentation of both proofs, we introduce  the problems 
  
   $$  \tilde \N_\alpha (\mu)   :=    	 	\inf_{\rho,\phi }\left \{\alpha \int_\Omega   \beta(.,\rho) +  \int_\Omega F(.,\phi ) \: dx -  \langle (\phi +\chi)\cdot  \nu, g\rangle_{\Gamma_D}  \: :\:     \phi  \in  \A_{\pi}^\chi(\mu-\alpha \:  \rho) ,\: \rho\in  L^{p'}(\Omega)  \right\}.  $$    
 and 
$$  \tilde D_\alpha   :=    	  
  \max_{\eta\in W ^{1,p}(\Omega) ,\: \eta_{/\Gamma_D}=g} \left\{  \int_\Omega  \mu\:\eta-\int_\Omega \chi\cdot \nabla\eta  - \int_\Omega F^*(.,\nabla\eta) - \alpha \int_\Omega   \beta^*(.,\eta)   + \langle \pi,\eta\rangle_{\Gamma_N}  \right\},$$  
  where $\alpha \in \{0,1\}.$ 
Then, it is clear that $\I_{\pi,g}^\chi(\mu) = \tilde \N_0  (\mu) $,  $\N_{\pi,g}^\chi(\mu) = \tilde \N_1(\mu) ,$  $\J_{\pi,g}^\chi(\mu) = \tilde D_0  (\mu) $ and   $D_{\pi,g}^\chi(\mu) = \tilde D_1(\mu) ,$  So, it is    enough to prove that  $\tilde \N_\alpha(\mu) = \tilde D_\alpha (\mu) $ ,  for $\alpha \in \{0,1\}.$   
  
  \bigskip 
  To begin with, we prove the following lemma. 
  \begin{lemma}\label{Pexistprimal}
  	For any $\mu \in  L^{p'}(\Omega) ,$   $\chi\in   L^{p'}(\Omega)^N   $  and  $\alpha \in \{0,1\},$  the problems $\tilde \N_\alpha(\mu) $ and $\tilde D_\alpha (\mu) $   have  solutions. 
  \end{lemma}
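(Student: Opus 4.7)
The plan is to handle both problems by the direct method of the calculus of variations on the reflexive Banach spaces $L^{p'}(\Omega)$ and $W^{1,p}(\Omega)$, using weak compactness to produce candidate limits and the convex structure of the integrands to close the argument via semi-continuity.

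For the primal $\tilde\N_\alpha(\mu)$, the first move is to eliminate the boundary duality in favour of interior quantities that respect weak convergence. Using the trace formula \eqref{tracexpression} with the extension $\tilde g\in W^{1,p}(\Omega)$ and the admissibility identity $\nabla\cdot(\phi+\chi)=\alpha\rho-\mu$, the boundary term $\langle(\phi+\chi)\cdot\nu,g\rangle_{\Gamma_D}$ rewrites as $\int(\phi+\chi)\cdot\nabla\tilde g+\int\tilde g\,(\alpha\rho-\mu)$, i.e.\ a sum of weakly continuous linear functionals of $(\rho,\phi)$. Coercivity then comes from $(H1)$ and $(H4)$: $F$ and $\beta$ control $|\phi|^{p'}$ and $(|\rho|-M)^{+p'}$ respectively, and a Young inequality absorbs the linear cross-terms $\int\phi\cdot\nabla\tilde g$ and $\alpha\int\tilde g\,\rho$ (the factors lying in $L^{p}$) into these leading energies, bounding any minimising sequence $(\rho_n,\phi_n)$ in $L^{p'}(\Omega)\times L^{p'}(\Omega)^N$. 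Extracting weak limits $\rho_n\rightharpoonup\rho$, $\phi_n\rightharpoonup\phi$, the admissibility identity, being linear and tested against fixed $\xi\in W^{1,p}_{\Gamma_D}(\Omega)$, passes to the limit so that $\phi\in\A_\pi^\chi(\mu-\alpha\rho)$; finally, Ioffe-type weak lower semi-continuity of the convex integrands $\beta(x,\cdot)$ and $F(x,\cdot)$ on $L^{p'}$ closes the argument.

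For the dual $\tilde D_\alpha$, I would set $\eta=\tilde g+\xi$ with $\xi\in W^{1,p}_{\Gamma_D}(\Omega)$. The bound $\beta^*\geq 0$ (from $\beta(x,0)=0$ in $(H3)$) together with $F^*(x,\xi^*)\geq C'_1|\xi^*|^p$ from $(H'1)$ dominates the functional above by $C+\int\mu\,\eta-\int\chi\cdot\nabla\eta-C'_1\|\nabla\eta\|_{L^p}^{p}+\langle\pi,\eta\rangle_{\Gamma_N}$; coupled with the Poincar\'e inequality on $W^{1,p}_{\Gamma_D}(\Omega)$, available because $\mathcal H^{N-1}(\Gamma_D)>0$, this drives the functional to $-\infty$ as $\|\xi\|_{W^{1,p}}\to\infty$. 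Any maximising sequence is therefore bounded in $W^{1,p}$; a weak limit still obeys $\eta=g$ on $\Gamma_D$ by continuity of the trace, the linear terms pass by weak continuity, and the convex l.s.c.\ functionals $\eta\mapsto\int F^*(\cdot,\nabla\eta)$, $\eta\mapsto\int\beta^*(\cdot,\eta)$ give weak upper semi-continuity for their negatives. The step I expect to be the main obstacle is the primal coercivity estimate above: the sign-indefinite boundary contribution must be genuinely absorbed by the leading $F$ and $\beta$ energies, which is precisely where the matched growth exponent $p'$ in $(H1)$--$(H4)$ and the fact that $\tilde g\in W^{1,p}(\Omega)$ combine decisively.
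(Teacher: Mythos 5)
Your proof is correct and follows essentially the same route as the paper: both arguments are the direct method, rewriting the boundary pairing $\langle(\phi+\chi)\cdot\nu,g\rangle_{\Gamma_D}$ via the trace formula \eqref{tracexpression} and the divergence constraint into weakly continuous interior terms, deriving boundedness of minimizing/maximizing sequences from $(H1)$, $(H4)$ (resp.\ $(H'1)$ and $\beta^*\ge 0$ with Poincar\'e on $W^{1,p}_{\Gamma_D}(\Omega)$), passing the linear admissibility constraint to the weak limit, and concluding by convexity and lower semi-continuity. You simply supply more detail than the paper, in particular on the coercivity estimate and on the dual problem $\tilde D_\alpha$, which the paper dispatches with "the same arguments."
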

  \begin{proof}
  	Let $(\rho_n^\alpha,\phi_n^\alpha)$ be a minimizing sequence for $   D (\mu)  .$ Here, we take  $\rho_n^\alpha$  equal to $0$ in the case where  $\alpha =0.$ Thanks to the assumptions on $F$ and $\alpha\beta$,  one sees that $\rho_n $ and    $\phi_n$  are bounded in $ L^{p'}(\Omega)  $   and $  L^{p'}(\Omega)^N$,   respectively. So, there exists a sub-sequence that we denote again by  $(\rho_n^\alpha,\phi_n^\alpha )\in     L^{p'}(\Omega)\times  L^{p'}(\Omega)^N $, and $(\rho^\alpha,\phi^\alpha)\in   L^{p'}(\Omega)\times L^{p'}(\Omega)^N,$  such that 
  	$$\rho_n^\alpha\to \rho^\alpha,\quad \hbox{ in } L^{p'}(\Omega) \hbox{-weak}^*$$ 
  	and   $$\phi_n^\alpha\to \phi^\alpha,\quad \hbox{ in }  L^{p'}(\Omega)^N \hbox{-weak} ,$$
  	where $\rho^0\equiv 0.$ 
  	In addition, combining  \eqref{tracexpression} with the fact that   $\nabla \cdot \phi _n^\alpha=\mu+\nabla \cdot \chi -\rho_n^\alpha $ in $\Omega$ and $(\phi _n^\alpha+\chi)\cdot \nu=\pi$ on $\Gamma_N,$ we see that 
  	$$ \langle (\phi_n^\alpha+\chi)\cdot \nu,g\rangle_{\Gamma_D}\to  \langle (\phi^\alpha+\chi)\cdot \nu ,g\rangle_{\Gamma_D}. $$  
  	Clearly $(\rho^\alpha,\phi^\alpha)$ is an admissible test function for the optimization problem $   \tilde \N_\alpha  (\mu)  $ ; i.e. $(\rho^\alpha,\phi^\alpha )\in  \A_{\pi}^\chi(\mu-\alpha\rho^\alpha) $.  Then, using the l.s.c. and convexity of $ \beta $ and $F  ,$        we deduce that  $(\rho^\alpha,\phi ^\alpha )$ is a solution of the the optimization problem  $    \tilde \N_\alpha(\mu)  .$   The proof for 
  	$\tilde  D_\alpha (\mu)  $ follows more or less the same arguments.  To avoid redundancy, we will omit the details of the proof. \end{proof}
   
  \bigskip 
Now,  for any $\alpha \in \{0,1\},$ let us consider the application $K_\alpha \: :\:   W^{ -1,p'}_{\Gamma_D} (\Omega)  \to \RR$ given by 
   $$ K_\alpha (f)  :=    	 \int_\Omega f_0\:  \tilde g -\int_\Omega \overline f \cdot \nabla \tilde g + 	\inf_{\rho,\phi }\left \{ \alpha \int_\Omega   \beta(.,\rho) +  \int_\Omega F(.,\phi ) \: dx -  \langle (\phi +\chi+ \overline f )\cdot  \nu, g\rangle_{\Gamma_D}  \:\right.$$  $$\left. :\:     \phi  \in  \A_{\pi}^{\chi+\overline f}(\mu+f_0-\alpha \:  \rho),\: \rho\in  L^{p'}(\Omega)  \right\},  $$
   where   $f_0$ and $\overline f$ are given by the decomposition  $f=f_0+\nabla \cdot \overline f$ in $W^{-1,p'}_{\Gamma_D}(\Omega).$  
   Then, it  is clear that 
   $$\N_{\pi,g}^\chi (\mu) =  K_1 (0)  \quad \hbox{ and }\quad \I_{\pi,g}^\chi( \mu) =K_0 (0).$$ 
Thank to    \eqref{tracexpression},  we see  that  
$$ \langle (\phi  +\chi+ \overline f )\cdot  \nu, g\rangle_{\Gamma_D}  
= \int_\Omega (\phi +\chi+ \overline f)  \cdot \nabla \tilde g -  \int_\Omega  (f_{0}+\mu-\alpha \rho) \: \tilde g     ,$$  
for any  $ \rho\in  L^{p'}(\Omega)$ and $  \phi  \in  \A_{\pi}^{\chi+\overline f}(\mu+f_0-\alpha \:  \rho).$ This implies   that  $K_\alpha$ may be written as 
\begin{equation}\label{mofK}
\begin{array}{c} 
		 K   _\alpha (f)   =    	 	\inf_{\rho,\phi }\left \{ \alpha \int_\Omega   \beta(.,\rho) +  \int_\Omega F(.,\phi ) \: dx   - \int_\Omega (\phi  +\chi )  \cdot \nabla \tilde g  \: dx + \int_\Omega  (\mu-\alpha \rho) \: \tilde g  \: \right.  \\  \\  \: 
  \left. :\:     \phi  \in  \A_{\pi}^{\chi+\overline f}(\mu+f_0-\alpha \:  \rho),\: \rho\in  L^{p'}(\Omega)  \right\}. 
  \end{array}  \end{equation} 
  
The following duality result is the key to proving Theorem \ref{tduality1} and Theorem \ref{tduality2}. After,   we need  simply to compute the Legendre transform of  $K_\alpha$  to complete the proofs.

\begin{lemma}\label{Kconvecl.s.c.}
	For any $\alpha \in \{0,1\},$ we have 
	\begin{equation} \label{perturbduality}
		-K_\alpha (0) = -K_\alpha ^{**}(0) = \min_{ \eta \in W^{1,p}(\Omega)} K_\alpha ^{*}(\eta) ,
	\end{equation} 
\end{lemma}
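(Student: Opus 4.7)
The plan is to apply the Fenchel--Moreau biconjugation theorem to the convex perturbation function $K_\alpha$ on $W^{-1,p'}_{\Gamma_D}(\Omega)$. The three equalities decompose naturally into: (a) $K_\alpha(0)=K_\alpha^{**}(0)$, which reduces to showing that $K_\alpha$ is convex, proper, and lower semi-continuous; (b) the identity $K_\alpha^{**}(0)=\sup_\eta\{-K_\alpha^{*}(\eta)\}$, which is just the definition of the biconjugate; and (c) the attainment of the infimum as a minimum, which will follow from continuity (or at least local boundedness above) of $K_\alpha$ at $0$ via a Fenchel--Rockafellar type qualification condition.

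I would first establish convexity and properness of $K_\alpha$. Convexity is direct from the expression \eqref{mofK}: the integrands $\beta(x,\cdot)$ and $F(x,\cdot)$ are convex by assumptions (H2)--(H3), the boundary term $\langle\pi,\cdot\rangle_{\Gamma_N}$ is linear, and for fixed $\tilde g$ the constraint $\phi\in \A_{\pi}^{\chi+\overline f}(\mu+f_0-\alpha\rho)$ is affine jointly in $(\rho,\phi,f_0,\overline f)$; partial infimization of a jointly convex functional then yields a convex function of $f$. Properness at $0$ follows from Lemma \ref{Fnonempty}, which provides an admissible $\phi$ (taking for instance $\rho=0$ when $\alpha=0$, or $\rho$ equal to any fixed element of $L^{p'}(\Omega)$ when $\alpha=1$), while the coercive lower bounds in (H1) and (H4) force $K_\alpha(f)>-\infty$ throughout its domain.

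Lower semi-continuity is the technical step. For $f_n\to f$ strongly in $W^{-1,p'}_{\Gamma_D}(\Omega)$, pick near-optimal pairs $(\rho_n,\phi_n)$ for $K_\alpha(f_n)$; the coercive growth in (H1) and (H4), combined with the affine constraint, gives uniform $L^{p'}$-bounds on $(\rho_n,\phi_n)$. Extracting weakly convergent subsequences $\rho_n\rightharpoonup\rho$ in $L^{p'}(\Omega)$ and $\phi_n\rightharpoonup\phi$ in $L^{p'}(\Omega)^N$, one passes to the limit in the linear constraint and in the boundary duality pairing exactly as in the proof of Lemma \ref{Pexistprimal}, then invokes weak lower semi-continuity of the convex integral functionals $\int\beta(x,\rho)$ and $\int F(x,\phi)$ to conclude $K_\alpha(f)\leq\liminf_n K_\alpha(f_n)$. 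Combined with convexity and properness, this yields $K_\alpha(0)=K_\alpha^{**}(0)$ via Fenchel--Moreau.

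For the attainment of the minimum, which I expect to be the most delicate point, I would show that $K_\alpha$ is locally bounded above near $0$: given $\delta f=\delta f_0+\nabla\cdot\overline{\delta f}$ small in $W^{-1,p'}_{\Gamma_D}(\Omega)$, a fixed admissible $\phi_0\in \A_{\pi}^{\chi}(\mu)$ can be perturbed via Lemma \ref{Fnonempty} into an admissible competitor for $K_\alpha(\delta f)$ whose cost exceeds that of $\phi_0$ by at most $O(\|\delta f\|_{W^{-1,p'}_{\Gamma_D}})$. Local boundedness above together with convexity forces continuity at $0$, hence $\partial K_\alpha(0)\neq\emptyset$; any element of the subdifferential then realizes $\inf_\eta K_\alpha^{*}(\eta)$, turning the infimum into a minimum and yielding \eqref{perturbduality}. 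The main obstacle is organising the lower semi-continuity and the continuity-at-zero arguments so that the shift by $\tilde g$ and the mixed Dirichlet/Neumann boundary pairings are handled consistently within the duality $\langle W^{-1,p'}_{\Gamma_D},W^{1,p}_{\Gamma_D}\rangle$.
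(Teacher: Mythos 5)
Your proposal is correct and follows essentially the same route as the paper: prove that $K_\alpha$ is convex (via joint convexity of the integrands and the affine nature of the constraint set) and lower semi-continuous (via coercivity from (H1), (H4), weak $L^{p'}$-compactness of near-optimal pairs, passage to the limit in the linear constraint, and weak l.s.c.\ of the convex integral functionals), then invoke Fenchel--Moreau/perturbation duality as in Ekeland--T\'emam. Your additional step (c), establishing local boundedness above of $K_\alpha$ near $0$ to get $\partial K_\alpha(0)\neq\emptyset$ and hence attainment of the minimum, is actually more explicit than the paper, which leaves attainment to ``classical duality results'' and to the direct-method existence of dual solutions in Lemma \ref{Pexistprimal}.
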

\begin{proof}To prove  \eqref{perturbduality}, it is enough to prove that  	$K_\alpha $ is convex and l.s.c.,  for any $\alpha \in \{0,1\},$  and      conclude   by classical duality results  (cf.  \cite{Ekeland}).  \\	
	 \underline{Convexity :} For any $f,\: h\in W^{-1,p'} _{\Gamma_D}(\Omega),$ taking $(\rho_f,\phi _f)$ and $(\rho_h,\phi _h)$ the solutions corresponding to the optimization problems $K_\alpha (f)  $ and 	$K_\alpha (h)  $ respectively, one sees that  $$\underbrace{ t\phi _h+(1-t)\phi _f}_{\phi _t}\in \A_{\pi}^{\overline h_{t}}(\mu-\alpha\underbrace{(t\rho_h+(1-t)\rho_f)}_{\rho_t}+\underbrace{(th_0+(1-t)f_0)}_{h_{0t}} ),$$ where 
	 $\overline h_{t}= t\overline h+(1-t)\overline f,$ for any $t\in [0,1].$   Moreover, using the convexity of $\beta (x,.)$ and $F(x,.),$ we have 
$$\alpha\int\beta(.,\rho_t)	+ \:  \int_\Omega F(.,\phi _t) -	   \langle (\phi _t+\chi+\overline f_t)\cdot \nu,g\rangle_{\Gamma_D}     \leq t \left(  \alpha\int_\Omega \beta(.,\rho_h)	+ \:  \int_\Omega F(.,\phi _h) -	   \langle (\phi _h+\chi+\overline h) \cdot \nu,g\rangle_{\Gamma_D}  \right)$$ 
$$ +  (1-t)\left(  \alpha\int_\Omega\beta(.,\rho_f)	+ \:  \int_\Omega F(.,\phi _f) -	   \langle (\phi _f+\chi+\overline f)\cdot \nu,g\rangle_{\Gamma_D} \right),$$ 
 which implies that 
$$ \int_\Omega (th_0+(1-t)f_0)\:  \tilde g -\int_\Omega   (t \overline h +(1-t)\overline f)  \cdot \nabla \tilde g+ \alpha\int_\Omega\beta(.,\rho_t)	+ \:  \int_\Omega F(.,\phi _t) -	   \langle (\phi _t+\chi+\overline f_t)\cdot \nu,g\rangle_{\Gamma_D}   $$  $$  \leq  t K_\alpha (h)+(1-t)K_\alpha (f) ,$$
and 
$$    K_\alpha (th+(1-t)f)  \leq  t K_\alpha (h)+(1-t)K_\alpha (f)  . $$
\underline{Lower semi-continuity :}  Let us consider $f_n$ a sequence of $W^{-1,p'} _{\Gamma_D}(\Omega)$ which converges to $f.$ That is a sequence of $L^{p'}(\Omega)$ and $L^{p'}(\Omega)^N$ functions $f_{0n}$ and $\overline f_n$ respectively, such that 
$$ \int_\Omega f_{0n}\:\xi -\int_\Omega \overline f_n\cdot \nabla \xi \to   \int_\Omega f_{0}\:\xi -\int_\Omega \overline f\cdot \nabla \xi , \quad \forall \xi\in W^{1,p}_{\Gamma_D}(\Omega).$$
Let us prove that $K_\alpha (f) \leq \liminf_{n\to\infty } K_\alpha (f_n).$    Then,  let us consider  $(\rho_n,\phi _n)$ be  the solution corresponding to $K_\alpha (f_n)$ ; i.e.  
\begin{equation}\label{testphin}
	 \phi _n \in  \A_{\pi}^{\chi+\overline f_n}(\mu+f_{0n}-\alpha \:  \rho_n) .  
\end{equation}
and
$$	 K   _\alpha (f_n)   =    	  \alpha \int_\Omega   \beta(.,\rho_n) +  \int_\Omega F(.,\phi _n) \: dx   -\int_\Omega (\phi _n+\chi )  \cdot \nabla \tilde g  \: dx  + \int_\Omega  (\mu-\alpha \rho_n) \: \tilde g, $$
where we use \eqref{mofK}.    We can assume that $  K_\alpha (f_n) $ is bounded. So,  there exists $C<\infty $ such that   
\begin{equation} 
  \alpha \int_\Omega   \beta(.,\rho_n) +  \int_\Omega F(.,\phi _n) \: dx  \leq C    +   \int_\Omega (\phi _n+\chi )  \cdot \nabla \tilde g  \: dx  -  \int_\Omega  (\mu-\alpha \rho_n) \: \tilde g  
\end{equation} 
Using  assumptions  $(H1)$ and $(H4)$ with Young formula,  we see that   $  \rho_n $ and    $\phi_n$  are bounded in $ L^{p'}(\Omega)  $   and $  L^{p'}(\Omega)^N$,   respectively. So, there exists $(\rho,\phi)\in   L^{p'}(\Omega)\times L^{p'}(\Omega)^N $  and a sub-sequence that we denote again by  $(\rho_n,\phi_n )  $,  such that 
$$\rho_n\to \rho,\quad \hbox{ in } L^{p'}(\Omega) \hbox{-weak}^*$$ 
and   $$\phi_n\to \phi,\quad \hbox{ in }  L^{p'}(\Omega)^N \hbox{-weak} .$$ 
Using \eqref{testphin}, we have 
\begin{equation}
	\int_\Omega (\phi_n-\chi)\cdot \nabla \xi = \int_\Omega (\mu-\rho_n)\: \xi +\langle f_n,\xi\rangle_{\Omega} +\langle \pi,\xi\rangle_{\Gamma_N},\quad \forall \xi\in W^{1,p}_{\Gamma_D},   
\end{equation}
and, by  letting $n\to \infty,$ we get 
\begin{equation}
	\int_\Omega (\phi-\chi)\cdot \nabla \xi = \int_\Omega (\mu-\rho )\: \xi +\langle f,\xi\rangle_{\Omega}+\langle \pi,\xi\rangle_{\Gamma_N},  \quad \forall \xi\in W^{1,p}_{\Gamma_D}. 
\end{equation} 
This implies that  $  \phi  \in  \A_{\pi}^{\chi+\overline f}(\mu+f_0-\alpha \:  \rho) .$      Then, using the l.s.c. and convexity of $\beta $ and $F  ,$   we have 
\begin{eqnarray*}
 K_\alpha(f)&\leq &  \alpha \int_\Omega   \beta(.,\rho) +  \int_\Omega F(.,\phi ) \: dx   -\int_\Omega (\phi +\chi )  \cdot \nabla \tilde g  \: dx  + \int_\Omega  (\mu-\alpha \rho) \: \tilde g   \\  \\ 
 & \leq &    \liminf_{n\to\infty } \left\{ \alpha \int_\Omega   \beta(.,\rho_n) +  \int_\Omega F(.,\phi _n) \: dx   -\int_\Omega (\phi _n+\chi )  \cdot \nabla \tilde g  \: dx  +  \int_\Omega  (\mu-\alpha \rho_n) \: \tilde g \right\}  \\  \\  &=&      \liminf_{n\to\infty }  K_\alpha (f_n).\end{eqnarray*} 
Thus the result.

\end{proof}

 \begin{proofth}{Proof of Theorem \ref{tduality1}}   As we said above, it is enough to compute 
 	$K_0^{*}.$   By definition of $K_0^*,$ for any $\eta \in W^{1,p}_{\Gamma_D} (\Omega),$  we have 
 	$$ K_0^*(\eta) =\max\left\{  \langle f,\eta \rangle_{\Omega} -  K_0(f ) \: :\:   f\in W^{-1,p'} _{\Gamma_D}(\Omega)\right\}    $$
 	$$ = \max_{f_0,\overline f,\phi }\Big\{  \underbrace{\int_\Omega  f_0   \:  \eta  -\int _\Omega \overline f \cdot \nabla  \eta 	- \:  \int_\Omega F(.,\phi ) +\int_\Omega (\phi  +\chi )  \cdot \nabla \tilde g  \: dx  -\int_\Omega  \mu \: \tilde g  }_{H(f_0,\overline f,\phi )}  $$ 
 	$$    \: :\:  (f_0,\overline f)\in L^{p'}(\Omega)\times L^{p'}(\Omega)^N,\:  \phi \in \A_{\pi}^{\chi+\overline f} (\mu+f_0)\Big\}.    $$
 	Since $\phi \in \A_{\pi}^{\chi+\overline f} (\mu+f_0) $ and $\eta \in W^{1,p}_{\Gamma_D} (\Omega),$ we have 
  $$ \int_\Omega  f_0   \:  \eta  -\int _\Omega \overline f \cdot \nabla  \eta  =  	\int_\Omega  (\phi+\chi)  \cdot \nabla \eta  - \int_\Omega   \mu\: \eta    -\langle \pi,\eta\rangle_{\Gamma_N}  .$$ 
 	This  implies that 
 	\begin{eqnarray*}
 		H(f_0,\overline f,\phi )  	&=& \int_\Omega  (\phi \cdot \nabla (\eta+\tilde g)-F(.,\phi ) )  - \int_\Omega  \mu\: (\eta+\tilde g)   + \int_\Omega  \chi\cdot \nabla (\eta+\tilde g)  -\langle \pi,\eta\rangle_{\Gamma_N} .\end{eqnarray*}
for any $ (f_0,\overline f)\in L^{p'}(\Omega)\times L^{p'}(\Omega)^N$ and $ \phi \in \A_{\pi}^{\chi+\overline f} (\mu+f_0) 
,$  	so that    \begin{eqnarray*}
 		K_0^*(\eta)
 		&=& 
 		\max_{ \phi \in  L^{p'}(\Omega)^N }\left\{  \int_\Omega (\phi \cdot \nabla (\eta+\tilde g)-F(.,\phi ) )   - \int_\Omega \mu\: (\eta+\tilde g)    + \int_\Omega  \chi\cdot \nabla (\eta+\tilde g)   -\langle \pi,\eta\rangle_{\Gamma_N}      \right\}  .
 	\end{eqnarray*}
Under  the assumptions $(H1)$,    we deduce   
 	\begin{eqnarray*}
 		K_0^*(\eta)&= &\int_\Omega F^*(.,\nabla (\eta+\tilde g)) - \int_\Omega \mu\: (\eta+\tilde g)   + \int_\Omega \chi\cdot \nabla (\eta+\tilde g)   -\langle \pi,\eta\rangle_{\Gamma_N} .
 	\end{eqnarray*}
 Then by  using \eqref{perturbduality}, we conclude that  
 	\begin{eqnarray*}
 		-K_0(0)&=&  
 		\min_{ \eta \in W^{1,p}_{\Gamma_D}(\Omega)}  \int_\Omega F^*(.,\nabla (\eta+\tilde g))   - \int_\Omega \mu\: (\eta+\tilde g)   + \int_\Omega \chi\cdot \nabla (\eta+\tilde g)   -\langle \pi,\eta\rangle_{\Gamma_N} \\  \\
 		&=& \min_{ \eta \in W^{1,p}(\Omega),\: \eta_{/\Gamma_D}=g}  \int_\Omega F^*(.,\nabla \eta) - \int_\Omega \mu\: \eta    + \int_\Omega  \chi\cdot \nabla \eta    -\langle \pi,\eta\rangle_{\Gamma_N}. 
 	\end{eqnarray*}
 	Thus the duality 
 	$$\I_{\pi,g}^\chi( \mu) = \J_{\pi,g}^\chi( \mu) . $$
 Thanks to Lemma \ref{Pexistprimal}, let us consider $\phi $ and $\eta$ the solutions of $\I_{\pi,g}^\chi( \mu) $ and $\J_{\pi,g}^\chi( \mu) .$  Our aim now is to prove that the couple $(\eta,\phi )$ solve the PDE \eqref{PDE1}.  The duality \eqref{duality1}  implies 
 		$$   \int_\Omega   \mu\:\eta-\int_\Omega \chi\cdot \nabla\eta  - \int_\Omega F^*(.,\nabla\eta)    + \langle \pi,\eta\rangle_{\Gamma_N}   =    \int_\Omega F(.,\phi ) \: dx -  \langle (\phi +\chi)\cdot  \nu, g\rangle_{\Gamma_D}  .$$  Since $\phi \in \A_{\pi}^\chi(\mu) ,$ we have 
 	$$ \int_\Omega( \phi + \chi)   \cdot \nabla \eta  =\int_\Omega \mu  \: \eta    +\langle \pi , \eta \rangle_{\Gamma_N} + \langle( \phi + \chi) \cdot \nu , g  \rangle_{\Gamma_D}. $$ 
Combining both equations we deduce that 
 $$  \int_\Omega F(.,\phi ) \: dx +   \int_\Omega F^*(.,\nabla \eta) \: dx   =  \int_\Omega \phi \cdot \nabla \eta.$$ 
Then, by using  the fact that $\phi \cdot \nabla \eta \leq F^*(.,\nabla\eta) + F(.,\phi ),$  we deduce that $F(.,\phi )+F^*(.,\nabla \eta)=\phi \cdot \nabla \eta$  and    $\phi \in \partial F^*(.,\nabla \eta)$ a.e. in $\Omega.$ Thus  $(\eta,\phi )$ solve the PDE \eqref{PDE1}.  For the   converse part,  it is not difficult to see first   that $$\I_{\pi,g}^\chi( \mu) \geq  \J_{\pi,g}^\chi( \mu) . $$
Then,  working with  $(\eta,\phi )$ being  a solution  of PDE \eqref{PDE1}, on proves easily the converse inequality and then concludes    $\I_{\pi,g}^\chi( \mu) =  \J_{\pi,g}^\chi( \mu) . $ 
 	
 \end{proofth}

  \bigskip

%
%
%

  \medskip 
  \begin{proofth}{Proof of Theorem \ref{tduality2}}  
As in the proof of Theorem \ref{tduality1}, we only need to compute 
$K_1^{*}.$      By definition,  for any $\eta \in W^{1,p}_{\Gamma_D}(\Omega),$  we have

	$$ K_0^*(\eta) =\max\left\{  \langle f,\eta \rangle_{\Omega} -  K_0(f ) \: :\:   f\in W^{-1,p'} _{\Gamma_D}(\Omega)\right\}    $$
$$ = \max_{f_0,\overline f,\phi }\Big\{  \underbrace{\int_\Omega  f_0   \:  \eta  -\int _\Omega \overline f \cdot \nabla  \eta  - \int_\Omega \beta(.,\rho) 	- \:  \int_\Omega F(.,\phi ) +\int_\Omega (\phi  +\chi )  \cdot \nabla \tilde g  \: dx  -\int_\Omega ( \mu -\rho) \: \tilde g  }_{H(f_0,\overline f,\rho,\phi )}  $$  
$$    \: :\:  (f_0,\overline f)\in L^{p'}(\Omega)\times L^{p'}(\Omega)^N,\: \phi  \in \A_{\pi}^\chi (\mu-\rho+f_0,\chi+\overline f),\: \rho\in L^{p'}(\Omega)\Big\}.    $$
Working as in the proof of Theorem \ref{tduality1}, we get  
\begin{eqnarray*} 
	H(f_0,\overline f,\rho,\phi ) 
	&=& \int_\Omega (\phi \cdot \nabla (\eta+\tilde g)-F(.,\phi ) ) +\int_\Omega (\rho\: (\eta+\tilde g) - \beta(.,\rho) ) \\  \\ 
	&  & - \int_\Omega \mu\: (\eta+\tilde g)   + \int_\Omega \chi\cdot \nabla (\eta+\tilde g)  -\langle \pi,\eta\rangle_{\Gamma_N}. \end{eqnarray*}
Then we deduce that 
 \begin{eqnarray*}
	K_1^*(\eta) 
& =& 	\max_{\rho,\phi }\Big\{  \int_\Omega (\phi \cdot \nabla (\eta+\tilde g)-F(.,\phi ) ) +\int_\Omega (\rho\: (\eta+\tilde g) - \beta(.,\rho) ) - \int_\Omega \mu\: (\eta+\tilde g)  \\  \\ &  & \hspace*{3cm}  + \int_\Omega \chi\cdot \nabla (\eta+\tilde g)   -\langle \pi,\eta\rangle_{\Gamma_N}      \: :\:    (\rho,\phi )\in L^{p'}(\Omega)\times L^{p'}(\Omega)^N \Big\}   \\  \\ 
	&=&  \max_{\phi  \in L^{p'}(\Omega)^N  }  \int_\Omega (\phi \cdot \nabla (\eta+\tilde g)-F(.,\phi ) ) +  \max_{\rho \in L^{p'}(\Omega)} \left\{ \int_\Omega (\rho\: (\eta+\tilde g) - \beta(.,\rho) )   \right. \\  \\ &  & \left. \hspace*{3cm}- \int_\Omega \mu\: (\eta+\tilde g)   + \int_\Omega  \chi\cdot \nabla (\eta+\tilde g)   -\langle \pi,\eta\rangle_{\Gamma_N}  \right\}   .
\end{eqnarray*}
Using the assumptions $(H1)$ and $(H4)$, we deduce    that  
\begin{eqnarray*}
	K_1^*(\eta)&= &\int_\Omega F^*(.,\nabla (\eta+\tilde g))  +\int_\Omega \beta^*(.,\eta+\tilde g)  - \int_\Omega \mu\: (\eta+\tilde g)   + \int_\Omega \chi\cdot \nabla (\eta+\tilde g)   -\langle \pi,\eta\rangle_{\Gamma_N} .
\end{eqnarray*}
Combining this with \eqref{perturbduality}, we obtain 
\begin{eqnarray*}
	-K_1(0)&=&  
	\min_{ \eta \in W^{1,p}(\Omega)}  \int_\Omega F^*(.,\nabla (\eta+\tilde g))  +\int_\Omega \beta^*(.,\eta+\tilde g)  - \int_\Omega \mu\: (\eta+\tilde g)   + \int\Omega \chi\cdot \nabla (\eta+\tilde g)   -\langle \pi,\eta\rangle_{\Gamma_N} \\  \\
	&=& \min_{ \eta \in W^{1,p}(\Omega),\: \eta_{/\Gamma_D}=g}  \int_\Omega  F^*(.,\nabla \eta)  +\int_\Omega \beta^*(.,\eta) - \int_\Omega \mu\: \eta    + \int \chi\cdot \nabla \eta    -\langle \pi,\eta\rangle_{\Gamma_N}. 
\end{eqnarray*}
Thus the duality  $\N_{\pi,g}^\chi (\mu) = D_{\pi,g}^\chi( \mu) . $ 
Now, thanks to Lemma \ref{Pexistprimal}, let us consider $(\rho,\phi )$ and $\eta$ be the solutions of $\N_{\pi,g}^\chi (\mu) =D_{\pi,g}^\chi( \mu) .$  We have 
$$   \int_\Omega  \mu\:\eta-\int_\Omega \chi\cdot \nabla\eta  - \int_\Omega F^*(.,\nabla\eta)   -\int_{\Omega } \beta^*(.,\eta)   + \langle \pi,\eta\rangle_{\Gamma_N}  $$ 
$$=  \int_\Omega   \beta(.,\rho) +  \int_\Omega F(.,\phi ) \: dx -  \langle (\phi +\chi)\cdot  \nu, g\rangle_{\Gamma_D} .$$ 
Since $ \phi  \in \A_{\pi}^\chi (\mu-\rho)  ,$ $\eta \in W^{1,p}(\Omega)$ and $\eta =g$ on $\Gamma_D,$  we also have   
$$\int_\Omega  \rho  \: \eta   + \int_\Omega( \phi + \chi)   \cdot \nabla \eta  =\int_\Omega \mu  \: \eta    +\langle \pi , \eta \rangle_{\Gamma_N}  + \langle (\phi +\chi)\cdot \nu, g \rangle_{\Gamma_D}   . $$ 
Combining both equation we get 
$$  \int_\Omega F(.,\phi )  + \int_\Omega F^*(.,\nabla\eta)  + \int_{\Omega } \beta^*(.,\eta)  +  \int_{\Omega } \beta (.,\rho)  = \int_\Omega \phi \cdot \nabla \eta + \int_\Omega \eta\: \rho. $$
This implies that $ F(.,\phi )  +  F^*(.,\nabla\eta) = \phi \cdot \nabla \eta$ and $\beta^*(.,\eta)  +    \beta (.,\rho) =\rho\: \eta$ a.e. in $\Omega.$  Thus  $(\rho,\eta,\phi )$ is a solution of the PDE \eqref{PDE2}.  The proof of the converse part follows in the same way of the proof of Theorem \ref{tduality1}. Indeed, first one sees directly that $D(\mu) \leq \N(\mu) .$  The, by working with the solution of  \eqref{PDE2} one proves     $D(\mu) = \N(\mu) .$

  \end{proofth}

  \bigskip 
  To end up this section, we prove the following results which will be useful for the sequel

  \begin{corollary}\label{CorEst}
  	For any  $\mu \in  L^{p'}(\Omega) $, $\pi  \in W ^{-1/p',p'}(\Gamma_N)$  and $g  \in W ^{1-1/p,p}_{00}(\Gamma_D)]$,   the problem  \eqref{PDE2} 
  	has a solution $(\rho,\eta,\phi )$ in the sense of : 	 $\rho  \in  L^{p'}(\Omega),$  $\eta \in W^{1,p} (\Omega),$  $\eta =g $ on $\Gamma_{D  },$   $\phi   \in L^{p'}(\Omega)^N,$  $\phi   \in \partial F^*  (.,\nabla \eta )$, 	$\rho \in \partial\beta^*(x,\eta ) $     a.e. in  $\Omega,$  and   
    $$\int_\Omega  \rho  \: \xi   \: dx   + \int_\Omega \phi   \cdot \nabla \xi  \: dx =\int_\Omega \mu  \: \xi  \: dx   -\int_\Omega  \chi \cdot \nabla \xi     -\langle  \pi , \xi\rangle_{\Gamma_N}    , \quad \forall \:  \xi\in W^{1,p}_{\Gamma_{ D} } (\Omega) .  $$      	Moreover,  the triplet $(\rho,\eta,\phi )$ satisfies 
  	\begin{equation} \label{eqextreme0}
  		\begin{array}{c}
  			\int_\Omega  \rho \: (\eta-\tilde g)+  \int_\Omega  \phi\cdot \nabla (\eta -\tilde g)  
  			=  	\int_\Omega  \mu\:(\eta-\tilde g) + \int_\Omega \chi\cdot \nabla(\eta-\tilde g)     + \langle \pi,\eta\rangle_{\Gamma_N}     .    
  		\end{array} 
  	\end{equation}	 
  	
  \end{corollary}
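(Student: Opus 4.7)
The plan is to derive both assertions as immediate consequences of Theorem \ref{tduality2}, with the identity \eqref{eqextreme0} obtained by a strategic choice of test function in the weak formulation already produced by that theorem.

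For the existence claim, I would simply invoke Theorem \ref{tduality2} with the given data $\mu$, $\chi$, $\pi$, $g$. Lemma \ref{Pexistprimal} yields a minimizing pair $(\rho,\phi)$ for $\N_{\pi,g}^\chi(\mu)$, and combined with the duality established in Theorem \ref{tduality2} this produces a dual maximizer $\eta$ such that the triplet $(\rho,\eta,\phi)$ satisfies the coupled PDE \eqref{PDE2}. Unpacking \eqref{PDE2} gives exactly the asserted regularity ($\rho\in L^{p'}(\Omega)$, $\eta\in W^{1,p}(\Omega)$ with $\eta=g$ on $\Gamma_D$, $\phi\in L^{p'}(\Omega)^N$), the pointwise subdifferential inclusions $\phi\in\partial_\xi F^*(\cdot,\nabla\eta)$ and $\rho\in\partial\beta^*(\cdot,\eta)$ a.e. in $\Omega$, and the variational identity on test functions in $W^{1,p}_{\Gamma_D}(\Omega)$. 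No new ingredient is needed here beyond quoting the theorem.

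For the extreme identity \eqref{eqextreme0}, the key observation is that the chosen lifting $\tilde g\in W^{1,p}(\Omega)$ satisfies $\tilde g=g$ on $\Gamma_D$ and $\tilde g=0$ on $\Gamma_N$, while by construction $\eta=g$ on $\Gamma_D$. Therefore $\xi:=\eta-\tilde g$ belongs to $W^{1,p}_{\Gamma_D}(\Omega)$ and is an admissible test function in the variational identity from part (2) of Theorem \ref{tduality2}. Plugging $\xi=\eta-\tilde g$ into that identity produces, term by term, both sides of \eqref{eqextreme0}. The only bookkeeping point is the Neumann contribution: since the trace of $\tilde g$ on $\Gamma_N$ is zero (and lives in $W^{1-1/p,p}_{00}(\Gamma_N)$, which is the domain of pairing with $\pi$), one has $\langle\pi,\eta-\tilde g\rangle_{\Gamma_N}=\langle\pi,\eta\rangle_{\Gamma_N}$, collapsing the boundary term to the form stated.

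The main (minor) obstacle is ensuring that the trace pairing $\langle\pi,\tilde g\rangle_{\Gamma_N}$ is genuinely well defined and equal to zero in the rigorous sense of duality between $W^{-1/p',p'}(\Gamma_N)$ and $W^{1-1/p,p}_{00}(\Gamma_N)$; this is secured by the hypothesis $g\in W^{1-1/p,p}_{00}(\Gamma_D)$, which allows a lifting $\tilde g\in W^{1,p}(\Omega)$ vanishing on $\Gamma_N$ whose $\Gamma_N$-trace sits in the correct space. Once this point is noted, the corollary follows from Theorem \ref{tduality2} together with a single substitution of test function, and no additional convex-analytic or compactness argument is required.
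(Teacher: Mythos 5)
Your proposal is correct and follows essentially the same route as the paper: existence is quoted directly from the duality theorem for $\N_{\pi,g}^\chi(\mu)$, and \eqref{eqextreme0} is obtained by observing that $\eta-\tilde g\in W^{1,p}_{\Gamma_D}(\Omega)$ is an admissible test function, with $\langle\pi,\tilde g\rangle_{\Gamma_N}=0$ since $\tilde g$ vanishes on $\Gamma_N$. Your write-up is in fact more careful than the paper's one-line proof about justifying the boundary pairing.
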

  
  \begin{proof}  The existence result is a direct consequence of Theorem \ref{tduality1}.  Let $(\rho,\eta,\phi )$ be a solution of the problem.   See that $\eta-\tilde g$ is an admissible test function. Thus \eqref{eqextreme0}.

  \end{proof}

%
  	\section{Steepest descent algorithm}  \label{SSteepest}
  	\setcounter{equation}{0}
  	
  	Thanks to Section 4,  the main application of the optimization problem $\I_{\pi,g}^\chi$ and $\N_{\pi,g}^\chi$ is to introduce a steepest descent algorithm  to study the evolution   problem of the type

  	\begin{equation}
  		\label{PDEevol}
  		\left\{\begin{array}{ll} 
  			\left.  \begin{array}{ll} 
  				\partial_t 	\rho  - \nabla\cdot ( \phi   +\rho  V )=  f    ,\quad        \\ 
  				\\  		\phi    =  \partial_\xi F  ^*(., \nabla \eta )  ,\:  		\rho  \in \partial \beta^*(x,\eta  )   \\   \\   
  			\end{array}\right\} 	\quad & \hbox{ in } Q, \\  \\
  			(\phi   +\rho  V )   \cdot \nu  =  \pi    & \hbox{ on }\Sigma_{N  }\\  \\
  			\eta   =  g    \quad    & \hbox{ on }\Sigma_{D  } \\  \\
  			\rho(0)=\rho_0& \hbox{ in }\Omega ,
  		\end{array}
  		\right. 	 
  	\end{equation} 
  	where we assume that $\rho_0\in L^{p'}(\Omega),$ with $V\in L^\infty(Q)$ and  $f=f_0+\nabla \cdot \overline f \in L^{p'}(0,T;W^{-1,p'}_{\Gamma_D}(\Omega)$ ; i.e. $(f_0,\overline f)\in L^{p'}(Q)\times L^{p'}(Q)^N.$ 
  	
  	\begin{definition}\label{Defsol1}
  			A weak solution for the problem \eqref{PDEevol}  is a triplet $(\rho,\eta,\phi )$ such that $\rho\in L^\infty(0,T;L^{p'}(\Omega))\cap W^{1,p'}(0,T; W^{-1,p'}_{\Gamma_D}(\Omega)),$ $\eta\in L^p(0,T;W^{1,p}(\Omega)), $  $\phi \in L^{p'}(Q),$  $\eta=g$ on $\Sigma_D,$ $\eta\in \partial \beta(x,\rho),$  $\nabla \eta  \in \partial F(x,\phi ),$ a.e. in $Q,$ and we have 
  	\begin{equation} \label{weaksol}
  		\frac{d}{dt}\int_\Omega \rho\: \xi\: dx + \int_\Omega   (\phi  +\rho\: V)  \cdot \nabla \xi \: dx = \langle f,\xi\rangle_{\Omega} + \langle \pi,\xi\rangle_{\Gamma_N} ,\quad \hbox{ in }\D'([0,T)),
  	\end{equation}
  	for any $\xi\in W^{1,p}_{\Gamma_D}(\Omega),$ with $\rho(0)=\rho_0$  in $\Omega.$   
  	\end{definition}

 \bigskip

    We proceed by discretizing time with discrete steps of size $\tau$, denoted as $t_0 = 0 < t_1 < \cdots < t_n =T$, with $t_i-t_{i-1}=\tau,$ for any $i=1,...n.$ Then, we  consider the limit as $\tau$ approaches zero in the sequence of piecewise constant curves 
  \begin{equation}\label{rhotau}
  	 \rho^\tau =\sum_{i=1}^n\rho^i\:  \chi_{]t_{i-1}  ,t_{i}]} \quad\hbox{ with } \rho^\tau(0)=\rho_0 ,\end{equation}    
  	where  $\rho^i$ is given, for each $i=1,....n,$  by 
\begin{equation}\label{coroptim1}
	   \rho^i = 	 	\hbox{argmin}_{\rho}\left \{\int_\Omega   \beta(.,\rho) +  \tau\: \int_\Omega F(.,\phi/\tau) \: dx - \tau\:   \langle (\phi -\rho^{i-1}V^i)\cdot  \nu, g\rangle_{\Gamma_D} \right. $$ $$\left.  \: :\:    \phi  \in  \A_{\pi}^{\tau \rho^{i-1}V^i+\tau \overline f^i} (\tau f_0^i +\rho^{i-1}-\rho),\:  \rho\in   L^{p'}(\Omega)  \right\}.
\end{equation}   
Here    $$ V^i:=  \frac{1}{\tau} \int_{t_{i-1}}^{t_i} V (t,.)\: dt,\:  f_0^i(.) :=   \frac{1}{\tau} \int_{t_{i-1}}^{t_i} f_0 (t,.)\: dt\hbox{ and } \overline f^i(.) :=   \frac{1}{\tau} \int_{t_{i-1}}^{t_i}\overline f(t,.)\: dt     \quad \hbox{ a.e. in }\Omega.  $$   
   
 In terms of $\I_{\pi,g},$ the optimization problem \eqref{coroptim1} may be written as follows 
\begin{equation} 
	\rho^i = 	 	\hbox{argmin}_{\rho\in   L^{p'}(\Omega)  }\left \{\int_\Omega   \beta(.,\rho) +   \tau \: \I_{\pi,g}^{\overline f+\rho^i\: V^i}   \left(\frac{ \tau f_0^i +\rho^{i-1}-\rho}{\tau}\right)     \right\} .\end{equation}

  	Thanks to Theorem \ref{tduality2}, we have

  	\begin{corollary}
  The problem \eqref{coroptim1} has a solution $(\rho_i,\phi_i) \in L^{p'}(\Omega) \times L^{p'}(\Omega)^{N}.$  Moreover, there exists $\eta_i\in W^{1,p}(\Omega)$ such that  $\eta^i=g$ on $\Gamma_{D  },$    $\rho^i \in \partial\beta^*(x, \eta^i ) $,   $\phi ^i\in \partial {F}^*  (.,\nabla \eta^i)$ a.e. in $\Omega,$ and the triplet $(\rho_i,\phi _i,\eta_i)$ satisfies in a weak sense the following system  PDE 
  	 	\begin{equation}
  		\label{PDEi}
  		\left\{\begin{array}{ll} 
  			\left.  \begin{array}{ll} 
  				\rho^i-\tau\: \nabla\cdot ( \phi ^i+\rho^{i-1} V^i +\overline f^i ) = \tau f^i_0 + \rho^{i-1} ,\quad     \\ 
  				\\  				\rho^i \in \partial \beta^*(x, \eta^i ),\quad \phi ^i   \in  \partial_\xi {F}^*(., \nabla \eta^i)   \\   \\     	 
  			\end{array}\right\} 	\quad & \hbox{ in } \Omega, \\  \\
  			(\phi ^i+\rho^{i-1} V^i +\overline f^i ) \cdot \nu  =  \pi    & \hbox{ on }\Gamma_{N  }\\  \\
  			\eta^i  =  g   ,\quad    & \hbox{ on }\Gamma_{D  } \: ; 
  		\end{array} \right.
  	\end{equation} 
i.e. 
  			$$\int_\Omega ( \rho^i-\rho  ^{i-1})\: \xi   \: dx+\tau\:  \int_\Omega (\phi ^i+\rho^{i-1} V^i + \overline f^i ) \cdot \nabla \xi   \: dx =\tau\:  \int_\Omega f^i_0 \: \xi    \: dx +\tau\:  \int_{\Gamma_{ N}} \pi\: \xi    \: dx , $$		
 for any $\xi\in W^{1,p}_{\Gamma_{ D} } (\Omega) .$  
  \end{corollary}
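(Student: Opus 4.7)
The strategy is to recognize problem \eqref{coroptim1} as a rescaled instance of $\N_{\pi,g}^{\chi}(\mu)$ studied in Theorem \ref{tduality2}, and to transcribe that theorem's conclusions back to the $i$-th iterate. First, I set $\mu_\tau := \tau f_0^i + \rho^{i-1} \in L^{p'}(\Omega)$ and $\chi_\tau := \tau \rho^{i-1} V^i + \tau \overline f^i \in L^{p'}(\Omega)^N$, so that the admissibility class appearing in \eqref{coroptim1} is exactly $\A_\pi^{\chi_\tau}(\mu_\tau - \rho)$. Next, I introduce the rescaled cost $\tilde F(x,\xi) := \tau F(x, \xi/\tau)$; a direct homogeneity check shows $\tilde F$ still satisfies (H1)--(H2) with appropriately rescaled $\tau$-dependent constants, and a one-line change of variable in the Legendre transform yields $\tilde F^{*}(x, \xi^{*}) = \tau F^{*}(x, \xi^{*})$. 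With these conventions, the minimization in \eqref{coroptim1} is exactly $\N_{\pi,g}^{\chi_\tau}(\mu_\tau)$ built from the pair $(\beta, \tilde F)$ in the sense of Theorem \ref{tduality2}.

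I then invoke Theorem \ref{tduality2} applied to this data. It delivers a minimizing pair $(\rho^i, \Phi^i) \in L^{p'}(\Omega) \times L^{p'}(\Omega)^N$ and a dual potential $\eta^i \in W^{1,p}(\Omega)$ with $\eta^i = g$ on $\Gamma_D$, satisfying pointwise $\rho^i \in \partial \beta^{*}(x, \eta^i)$ and $\Phi^i \in \partial_\xi \tilde F^{*}(\cdot, \nabla \eta^i)$ a.e.\ in $\Omega$, together with the weak divergence identity
\[ \int_\Omega \rho^i\,\xi + \int_\Omega (\Phi^i + \chi_\tau)\cdot \nabla \xi = \int_\Omega \mu_\tau\,\xi + \langle \pi, \xi \rangle_{\Gamma_N}, \qquad \forall \xi \in W^{1,p}_{\Gamma_D}(\Omega). \]

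To close, I undo the rescaling by setting $\phi^i := \Phi^i/\tau$. The identity $\tilde F^{*} = \tau F^{*}$ gives $\partial_\xi \tilde F^{*}(\cdot, \nabla\eta^i) = \tau\,\partial_\xi F^{*}(\cdot, \nabla\eta^i)$, so the membership $\Phi^i \in \partial_\xi \tilde F^{*}(\cdot, \nabla\eta^i)$ reads $\phi^i \in \partial_\xi F^{*}(\cdot, \nabla\eta^i)$. Substituting $\Phi^i = \tau \phi^i$ and $\chi_\tau = \tau(\rho^{i-1}V^i + \overline f^i)$ in the integral identity above and unfolding $\mu_\tau$ yields precisely the weak formulation of \eqref{PDEi} stated in the corollary; read back in strong form, this is $\rho^i - \tau\nabla\cdot(\phi^i + \rho^{i-1}V^i + \overline f^i) = \tau f_0^i + \rho^{i-1}$ in $\Omega$ with $(\phi^i + \rho^{i-1}V^i + \overline f^i)\cdot \nu = \pi$ on $\Gamma_N$.

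The only delicate point is the rescaling bookkeeping: verifying that the $\tau$ factors in the objective, the admissibility constraint, and the $\Gamma_D$ boundary pairing of \eqref{coroptim1} all align consistently with the triple $(\mu_\tau, \chi_\tau, \tilde F)$, and that the Legendre transform collapses cleanly to $\tilde F^{*} = \tau F^{*}$ rather than something more intricate. Once this is set up, no analytic ingredient beyond Theorem \ref{tduality2} is required---existence follows from Lemma \ref{Pexistprimal} applied to $(\beta, \tilde F)$, and the pointwise inclusions plus PDE come directly from the extremality/subdifferential characterization already proved inside Theorem \ref{tduality2}.
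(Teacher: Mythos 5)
Your proposal is correct and follows essentially the same route as the paper: apply Theorem \ref{tduality2} to the rescaled cost $\tilde F_\tau(A)=\tau F(A/\tau)$ and undo the scaling on the optimal flux. Note that your conjugate formula $\tilde F_\tau^{*}=\tau F^{*}$ is the correct one (and the one consistent with the stated PDE \eqref{PDEi}); the paper's printed identity $\tilde F_\tau^{*}(A^{*})=\tau F^{*}(\tau A^{*})$ is a slip.
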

 \begin{proof}
 	It is enough to apply Theorem \ref{tduality2} with $\tilde F_\tau(A)=  \tau\: F(A/\tau)$, for any $A\in \RR^N.$  Then, it is not difficult to see that $\tilde F_\tau^*(A^*)=  \tau\:   F^*(\tau\: A^*)$, for any $A^*\in \RR^N.$ Moreover $A\in \partial \tilde F_\tau^*(A^*)$ if and only if $ A^*\in \tau \partial F(A) .$  
 \end{proof}

 \medskip 
 To let $\tau\to 0,$  we  consider  the sequences
    \begin{equation}\label{functionstau}\eta^\tau =\sum_{i=1}^n\eta^i\:  \chi_{]t_{i-1}  ,t_{i}]},\quad   \phi ^\tau =\sum_{i=1}^n\phi ^i\:  \chi_{]t_{i-1}  ,t_{i}]} ,\quad f^\tau_0 =\sum_{i=1}^nf^i\:  \chi_{]t_{i-1}  ,t_{i}]}, \quad \overline f^\tau=\sum_{i=1}^n\overline f^i\:  \chi_{]t_{i-1}  ,t_{i}]}  \end{equation}   
  and  
  	$$\tilde \rho^{\tau}(t) =\frac{(t-t_{i-1})\rho^{i} -(t-t_{i}) \rho^{i-1}}{ \tau},\quad \hbox{ for any }t\in ]t_{i-1}  ,t_{i}],\quad i=1,.... n .$$
By construction,   the applications $\tilde \rho^\tau,\: \rho^\tau,\: \eta^\tau$ and $\phi ^\tau$  satisfy  the following PDE 
  	\begin{equation} 
  		\left\{\begin{array}{ll} 
  			\left.  \begin{array}{ll} 
  				\partial_t 	\tilde \rho ^\tau - \nabla\cdot ( \phi   ^\tau+ \rho^\tau(.-\tau)V^\tau+ \overline f^\tau ) =  f  ^\tau_0       \quad   \\ 
  				\\  				\rho ^\tau \in \partial \beta^*(x,\eta ^\tau ),  \quad \phi   ^\tau \in  \partial_\xi F  ^*(., \nabla \eta ^\tau)  	 
  			\end{array}\right\} 	\quad & \hbox{ in } Q, \\  \\
  			(\phi    ^\tau +\rho^\tau(.-\tau)V^i  +\overline f^\tau  )\cdot \nu  =  \pi    & \hbox{ on }\Sigma_{N  }\\  \\
  			\eta ^\tau  =  g   .\quad    & \hbox{ on }\Sigma_{D  },
  		\end{array}
  		\right. 	 
  	\end{equation} 
  	where we extend $\rho^\tau$ by $\rho_0$ on $(-\infty,0).$ 
  	Our goal now  is to prove that, as $\tau\to 0,$   the $\tau-$approximation solutions  $\tilde \rho^\tau,\: \rho^\tau,\: \eta^\tau$   and  $\phi ^\tau$ converge to the solutions of the continuous  problem \eqref{PDEevol}.  We present the main results following this process in the subsequent sections, where we differentiate between different scenarios based on the structural assumptions imposed on  $F$ and $\beta.$

  \begin{remark} \label{RCLiggett}	  
  	
  	\begin{enumerate}
  		\item 
   Note that  the discrete approximation   \eqref{PDEi} is widely used in the context of nonlinear semigroup theory (at least in the case $V\equiv 0$). The time step approximation $\rho^\tau$  is typically referred to as the Crandall-Liggett approximation (cf. \cite{CL}). Its convergence can be attributed to the $L^1-$accretivity of the fundamental operator that drives the dynamic. This approach of ''monotonicity'' enables to come with strong compactness  of $\rho^\tau$ and $\tilde \rho^\tau.$  We do not use this approach  in this paper since the $L^1-$accretive  property is hard to obtain  for general $V$ and $F,$  even if we do believe that it should be a nice issue for the proof of existence (and uniqueness) of a solution.   On can see for instance \cite{IgShaw} and \cite{IgPME} for  linear diffusion with Hele-Shaw   and PME non-linearity respectively.  	Consequently, the approach presented in this paper can be viewed as a fresh perspective on the discrete approximation process typically employed in Crandall-Liggett theory for nonlinear evolution problems in $L^1(\Omega).$ 
  
   \item Observe that in the situation where $V\equiv 0,$ the steepest descent algorithm through minimum flow we employ in \eqref{coroptim1} directly derives from Euler-Implicit discretization for the evolution problem \eqref{PDEevol}. It is worth noting that this is not the case for steepest descent algorithm through Wasserstein metric, even though both methods can capture the same continuous dynamics (at least in the case of homogeneous Neumann boundary conditions and  $F$ of $p-$Laplacian type). This observation may be linked to the fact that the linearized regime of Wasserstein distance is closely associated with dual Sobolev norm (cf. Section 7.6 of \cite{Vbook}), and the fact that   our approach is intimately connected to dual Sobolev metric gradient flow in the homogeneous case (see Section \ref{RemH1}).

   	\end{enumerate}	 
  	 
  	   \end{remark}

    	\subsection{Main results }

 \subsubsection{The case where $V\equiv 0$}
 
Assuming  $V\equiv 0,$ the evolution problem \eqref{evol0} corresponds to   
 \begin{equation}\label{FVnull}
 	\left\{\begin{array}{ll} 
 		\left.  \begin{array}{ll} 
 			\partial_t 	  \rho   - \nabla\cdot  ( \phi  +\overline f)  =  f_0         \quad   \\ 
 			\\  				\rho  \in \partial \beta^*(.,\eta   ),  \quad \phi    \in  \partial_\xi F  ^*(., \nabla \eta  ) 
 		\end{array}\right\} 	\quad & \hbox{ in } Q, \\  \\
 		(\phi  + \overline f) 	\cdot \nu  =  \pi    & \hbox{ on }\Sigma_{N  }\\  \\
 		\eta  =  g     & \hbox{ on }\Sigma_{D  }\\ \\
 		\rho(0)=\rho_0  & \hbox{ in }\Omega ,
 	\end{array}
 	\right. 	 
 \end{equation} 
 where $(f_0,\overline f) \in L^{p'}(Q)\times L^{p'}(Q) ^N.$

 \begin{theorem}\label{texistVnull} 
 	Under  the assumptions $(H1)-(H4),$ for any $\rho_0\in L^{p'}(\Omega) $    and $ ( f_0, f)  \in  L^{p'}(Q)\times  L^{p'}(Q)^N ,$ the problem \eqref{FVnull} 	has a weak solution $(\rho,\eta,\phi ).$  
 \end{theorem}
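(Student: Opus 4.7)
The plan is to pass to the limit $\tau \to 0$ in the time-discrete scheme \eqref{PDEi}, whose well-posedness at each step is guaranteed by Theorem \ref{tduality2}, and to identify the limits $(\rho, \eta, \phi)$ as a weak solution of \eqref{FVnull}. The proof falls into three stages: a priori estimates on the $\tau$-approximations, compactness, and the identification of the maximal monotone graphs in the limit.

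For the a priori estimates I would test \eqref{PDEi} against $\eta^i - \tilde g \in W^{1,p}_{\Gamma_D}(\Omega)$, using the discrete version \eqref{eqextreme0}. Combining the Fenchel extremality identities $\rho^i \eta^i = \beta(\cdot, \rho^i) + \beta^*(\cdot, \eta^i)$ and $\phi^i \cdot \nabla \eta^i = F(\cdot, \phi^i) + F^*(\cdot, \nabla \eta^i)$ with the discrete convexity inequality $\beta(\cdot, \rho^i) - \beta(\cdot, \rho^{i-1}) \leq (\rho^i - \rho^{i-1}) \eta^i$ (which is valid because $\eta^i \in \partial \beta(\cdot, \rho^i)$), summing over $i$ and absorbing the terms involving $\tilde g$, $\overline f^\tau$, $f_0^\tau$ and $\pi$ by Young's inequality should yield
\begin{equation*}
\sup_{i \leq n} \int_\Omega \beta(\cdot, \rho^i) + \tau \sum_{i=1}^n \int_\Omega F(\cdot, \phi^i) + \tau \sum_{i=1}^n \int_\Omega F^*(\cdot, \nabla \eta^i) \leq C,
\end{equation*}
with $C$ depending only on the data. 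The coercivity assumptions (H1), (H'1) and (H4) then translate this into bounds for $\rho^\tau$ in $L^\infty(0,T; L^{p'}(\Omega))$, for $\phi^\tau$ in $L^{p'}(Q)^N$, and for $\nabla \eta^\tau$ in $L^p(Q)^N$; the Dirichlet trace $\eta^\tau = g$ on $\Sigma_D$ together with a Poincaré-type inequality lifts the latter to a bound in $L^p(0,T; W^{1,p}(\Omega))$.

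For compactness, I would read \eqref{PDEi} as $\partial_t \tilde \rho^\tau = \nabla \cdot (\phi^\tau + \overline f^\tau) + f_0^\tau$ in $L^{p'}(0,T; W^{-1,p'}_{\Gamma_D}(\Omega))$. Combined with the $L^{p'}$-bound on $\rho^\tau$ and the compact embedding $L^{p'}(\Omega) \hookrightarrow W^{-1,p'}_{\Gamma_D}(\Omega)$, an Aubin--Lions argument produces strong convergence of $\tilde \rho^\tau$ (and hence of $\rho^\tau$, via the bound $\|\rho^\tau - \tilde \rho^\tau\|_{W^{-1,p'}_{\Gamma_D}} = O(\tau)$) in $C([0,T]; W^{-1,p'}_{\Gamma_D}(\Omega))$. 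Along a subsequence one then obtains $\phi^\tau \rightharpoonup \phi$ weakly in $L^{p'}(Q)^N$ and $\eta^\tau \rightharpoonup \eta$ weakly in $L^p(0,T; W^{1,p}(\Omega))$, with $\eta = g$ on $\Sigma_D$; passing to the limit in the linear weak formulation \eqref{weaksol} is then immediate, and $\rho(0) = \rho_0$ is inherited from $\tilde \rho^\tau(0) = \rho_0$.

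The main obstacle is the identification of the nonlinear relations $\rho \in \partial_r \beta^*(\cdot, \eta)$ and $\phi \in \partial_\xi F^*(\cdot, \nabla \eta)$ in the limit, starting only from weak convergence of $\rho^\tau$, $\phi^\tau$ and $\nabla \eta^\tau$. My plan is a Minty monotonicity argument applied simultaneously to both graphs, which reduces to establishing
\begin{equation*}
\limsup_{\tau \to 0} \int_Q \bigl( \phi^\tau \cdot \nabla \eta^\tau + \rho^\tau \eta^\tau \bigr) \leq \int_Q \bigl( \phi \cdot \nabla \eta + \rho \eta \bigr).
\end{equation*}
At the discrete level, a telescoping Abel summation combined with the convexity of $\beta^*$ gives $\sum_i (\rho^i - \rho^{i-1}) \eta^i \geq \sum_i [\beta^*(\cdot, \eta^i) - \beta^*(\cdot, \eta^{i-1})]$, which yields a discrete energy identity. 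At the continuous level, the matching inequality must come from a chain rule of the form $\tfrac{d}{dt} \int_\Omega \beta^*(\cdot, \eta) = \langle \partial_t \rho, \eta - \tilde g \rangle_\Omega + \text{boundary contributions}$; this pairing is meaningful since $\partial_t \rho$ lies in $L^{p'}(0,T; W^{-1,p'}_{\Gamma_D}(\Omega))$ while $\eta - \tilde g$ lies in the predual $L^p(0,T; W^{1,p}_{\Gamma_D}(\Omega))$. Justifying this chain rule rigorously, in the generality of non-homogeneous data $g$, source $\overline f$, and arbitrary maximal monotone $\partial \beta$ and $\partial F$, is where I expect the bulk of the technical effort; once it is available, comparison of the discrete and continuous energy identities produces the $\limsup$ inequality, and maximal monotonicity of $\partial \beta^*$ and $\partial F^*$ closes the loop.
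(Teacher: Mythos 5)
Your proposal follows essentially the same route as the paper: the same discrete scheme solved via Theorem \ref{tduality2}, the same a priori estimates obtained by testing with $\eta^i-\tilde g$ and the discrete convexity inequality $(\rho^i-\rho^{i-1})\eta^i\geq \beta(\cdot,\rho^i)-\beta(\cdot,\rho^{i-1})$, compactness from the bound on $\partial_t\tilde\rho^\tau$ in $L^{p'}(0,T;W^{-1,p'}_{\Gamma_D}(\Omega))$ (the paper invokes the Andreianov--Canc\`es--Moussa very weak Aubin lemma where you use classical Aubin--Lions, but both serve only to pass to the limit in the product $\rho^\tau\eta^\tau$ and identify $\eta\in\partial\beta(\cdot,\rho)$), and a Minty argument for the flux resting on the comparison of the limit of the discrete energy inequality with a continuous energy identity supplied by a chain-rule lemma (the paper's Lemma \ref{ltech2}), which is trivially closed here since $V\equiv 0$. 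The one correction: the Lyapunov functional in that chain rule must be $\int_\Omega(\beta(\cdot,\rho)-\tilde g\,\rho)$, whose time derivative is the pairing of $\partial_t\rho$ with $\eta-\tilde g$; your $\tfrac{d}{dt}\int_\Omega\beta^*(\cdot,\eta)$ would require a time derivative of $\eta$, which is not available.
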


\subsubsection{Diffusion-Transport problem with linear diffusion}
For  linear diffusion, we select  quadratic $F$ as follows : 
\begin{equation}\label{Flinear}
	F(x,A)=\frac{k(x)}{2}\vert A\vert^2,\quad \hbox{ for a.e. }x\in \Omega\hbox{ and }A\in \RR^N,
\end{equation}
where  $ k\in L^\infty(\Omega),$ is such that $0<\mathrm{Infess}_\Omega(k).$  In this case, the evolution problem  reads 	 
 	\begin{equation}\label{PDElinear}
 			\left\{\begin{array}{ll} 
 			\partial_t 	  \rho   - \nabla\cdot ( k\: \nabla \eta +\rho\: V+\overline f) =  f_0,          \quad  	\rho  \in \partial \beta^*(x,\eta  )  
 			\quad & \hbox{ in } Q, \\  \\
 			(k\: \nabla \eta   +\rho\: V  +\overline f  )\cdot \nu  =  \pi    & \hbox{ on }\Sigma_{N  }\\  \\
 			\eta  =  g     & \hbox{ on }\Sigma_{D  }\\ \\
 			\rho(0)=\rho_0  & \hbox{ in }\Omega.
 		\end{array}
 		\right. 	 
 \end{equation}

\begin{theorem}\label{texistlinear} 
Assume $(H1)-(H4)$ are  fulfilled.  For any $\rho_0\in L^{2}(\Omega),$  $V\in L^\infty(Q)$ and  $( f_0, f)  \in  L^{2}(Q)\times  L^{2}(Q)^N ,$  
\end{theorem}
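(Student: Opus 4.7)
The plan is to realize the weak solution of \eqref{PDElinear} as a limit of the time-discrete proximal scheme \eqref{coroptim1}, with $p=p'=2$ and $F$ as in \eqref{Flinear}, by combining discrete energy estimates with an Aubin--Lions compactness step and a Minty-type monotonicity argument for the nonlinear relation $\rho\in\partial\beta^*(\cdot,\eta)$.

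First, I would apply the corollary following \eqref{PDEi} (which is a direct specialization of Theorem~\ref{tduality2}) to obtain, for each $\tau>0$, the triplet $(\rho^i,\eta^i,\phi^i)$ with $\phi^i=k\nabla\eta^i$ and $\rho^i\in\partial\beta^*(\cdot,\eta^i)$ solving \eqref{PDEi}, and assemble the interpolants $\rho^\tau,\eta^\tau,\phi^\tau$ and $\tilde\rho^\tau$ defined in \eqref{rhotau}--\eqref{functionstau}. Testing \eqref{PDEi} against $\xi=\eta^i-\tilde g\in W^{1,2}_{\Gamma_D}(\Omega)$ and exploiting the Fenchel identity $\rho^i\eta^i=\beta(\cdot,\rho^i)+\beta^*(\cdot,\eta^i)$ together with $\rho^{i-1}\eta^i\leq\beta(\cdot,\rho^{i-1})+\beta^*(\cdot,\eta^i)$ gives the discrete chain rule
\begin{equation*}
\int_\Omega\beta(\cdot,\rho^i)-\int_\Omega\beta(\cdot,\rho^{i-1})+\tau\int_\Omega k|\nabla\eta^i|^2 \leq \tau\cdot R^i,
\end{equation*}
where $R^i$ collects the contributions from $\tilde g$, $V^i$, $\overline f^i$, $f_0^i$ and $\pi$. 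Summing in $i$, using $V\in L^\infty(Q)$, $k\geq k_0>0$, and absorbing the cross term $\int\rho^{i-1}V^i\cdot\nabla\eta^i$ via Young's inequality together with the coercivity assumption (H4) on $\beta$, I obtain uniform bounds for $\rho^\tau$ in $L^\infty(0,T;L^2(\Omega))$, for $\eta^\tau$ in $L^2(0,T;W^{1,2}(\Omega))$, and for $\phi^\tau=k\nabla\eta^\tau$ in $L^2(Q)^N$. Reading the discrete equation itself as an identity in $W^{-1,2}_{\Gamma_D}(\Omega)$ then yields $\partial_t\tilde\rho^\tau$ bounded in $L^2(0,T;W^{-1,2}_{\Gamma_D}(\Omega))$.

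Next, along a subsequence I extract $\rho^\tau,\tilde\rho^\tau\rightharpoonup\rho$ weakly-$\ast$ in $L^\infty(0,T;L^2(\Omega))$, $\eta^\tau\rightharpoonup\eta$ weakly in $L^2(0,T;W^{1,2}(\Omega))$ with $\eta=g$ on $\Sigma_D$, and $\phi^\tau\rightharpoonup k\nabla\eta$ weakly in $L^2(Q)^N$; by Aubin--Lions, $\tilde\rho^\tau\to\rho$ strongly in $C([0,T];W^{-1,2}_{\Gamma_D}(\Omega))$. The standard estimate $\|\rho^\tau(\cdot-\tau)-\rho^\tau\|_{L^2(0,T;W^{-1,2}_{\Gamma_D})}=O(\sqrt\tau)$ combined with $V\in L^\infty(Q)$ justifies passing to the limit in the transport term against any $\xi\in W^{1,2}_{\Gamma_D}(\Omega)$, so that the integrated form \eqref{weaksol} holds in the limit.

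The main obstacle is to identify the nonlinear relation $\rho\in\partial\beta^*(\cdot,\eta)$ from the weak limits. I would use a Minty-type argument: the strong convergence of $\tilde\rho^\tau$ in $L^2(0,T;W^{-1,2}_{\Gamma_D}(\Omega))$ paired with the weak convergence of $\eta^\tau-\tilde g$ in $L^2(0,T;W^{1,2}_{\Gamma_D}(\Omega))$, combined with an integration by parts in time using the equation for $\partial_t\tilde\rho^\tau$ and the initial condition $\tilde\rho^\tau(0)=\rho_0$, yields
\begin{equation*}
\limsup_{\tau\to 0}\int_0^T\!\!\int_\Omega \rho^\tau\,\eta^\tau \leq \int_0^T\!\!\int_\Omega \rho\,\eta.
\end{equation*}
Since $\rho^\tau\in\partial\beta^*(\cdot,\eta^\tau)$ a.e.\ gives $\int\int\rho^\tau\eta^\tau=\int\int\beta(\cdot,\rho^\tau)+\int\int\beta^*(\cdot,\eta^\tau)$, lower semicontinuity of $\rho\mapsto\int\int\beta(\cdot,\rho)$ and $\eta\mapsto\int\int\beta^*(\cdot,\eta)$ under the available weak convergences yields $\int\int\rho\eta\geq\int\int\beta(\cdot,\rho)+\int\int\beta^*(\cdot,\eta)$, and the reverse Fenchel--Young inequality forces equality a.e.\ in $Q$, hence $\rho\in\partial\beta^*(\cdot,\eta)$. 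Together with \eqref{weaksol} and the strong continuity of $\tilde\rho^\tau$ in $C([0,T];W^{-1,2}_{\Gamma_D})$, which ensures $\rho(0)=\rho_0$, this produces a weak solution in the sense of Definition~\ref{Defsol1}.
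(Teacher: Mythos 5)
Your proposal is correct and follows the same overall skeleton as the paper: the paper proves Theorem \ref{texistlinear} in two lines by invoking Lemma \ref{lcondsol1} (which itself packages the discrete energy estimate of Lemma \ref{Lrhotproperty}, the uniform bounds of Lemma \ref{Propcompacttau}, and the limit passage), the only point specific to the linear case being that $\phi^\tau=k\nabla\eta^\tau$ makes the flux relation \eqref{condphi} stable under weak convergence — exactly the observation you make. The one place where you genuinely diverge is the identification of the graph relation between $\rho$ and $\eta$: the paper relies on the very weak Aubin-type product-convergence result of Andreianov--Canc\`es--Moussa (Theorem A.1) to pass to the limit in $\int\!\!\int\rho^\tau\eta^\tau$, whereas you use the classical Aubin--Lions--Simon lemma (compactness of $L^2(\Omega)\hookrightarrow W^{-1,2}_{\Gamma_D}(\Omega)$ plus the bound on $\partial_t\tilde\rho^\tau$) to get strong convergence of $\tilde\rho^\tau$ in $C([0,T];W^{-1,2}_{\Gamma_D}(\Omega))$, then pair strong with weak in the duality $\langle\rho^\tau,\eta^\tau-\tilde g\rangle$ and close with Fenchel--Young. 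For $p=2$ this is a perfectly valid and arguably more elementary substitute; the paper's choice of Theorem A.1 is what allows the same argument to run uniformly for all $1<p<\infty$ in Theorems \ref{texistVnull} and \ref{texistgen}, where the classical route is less convenient. Two small remarks: your $O(\sqrt\tau)$ bound on $\|\rho^\tau(\cdot-\tau)-\rho^\tau\|_{L^2(0,T;W^{-1,2}_{\Gamma_D})}$ is actually $O(\tau)$ (it equals $\tau\|\partial_t\tilde\rho^\tau\|_{L^2(0,T;W^{-1,2}_{\Gamma_D})}$), which only strengthens your argument; and in the transport term you should also record that $V^\tau\cdot\nabla\xi\to V\cdot\nabla\xi$ strongly in $L^2(Q)$ so that the weak convergence of the backward interpolant suffices — this is the same treatment of the term $J_\tau$ carried out in the paper's proof of Lemma \ref{lcondsol1}.
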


\subsubsection{Diffusion-Transport problem with nonlinear diffusion}
For general $F,$ not necessary quadratic as in Theorem \ref{texistlinear},  we need to work with additional assumptions on $\beta.$  For technical reason, we assume in this case the following condition on $\beta$ :    

 	\begin{itemize}
 	\item[(H5)]    for a.e. $x\in \Omega,$  the application $r\in\RR \to   \beta^*(x,r)$  is  differentiable.
 	\end{itemize} 

\begin{theorem}\label{texistgen} 
Under the assumptions  $(H1)-(H5)$, for any $\rho_0\in L^{p'}(\Omega),$  $V\in L^\infty(Q)$ and $(f_0,\overline f) \in L^{p'}(\Omega)^N \times L^{p'}(\Omega)) ,$ the problem \eqref{PDElinear} 	has a weak solution $(\rho,\eta,\phi ).$   
\end{theorem}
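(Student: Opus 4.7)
The plan is to pass to the limit as $\tau\to 0$ in the discrete scheme \eqref{PDEi}, whose solvability at each step is guaranteed by Theorem \ref{tduality2}. Writing the weak form satisfied by the interpolates $\rho^\tau,\tilde\rho^\tau,\eta^\tau,\phi^\tau$ defined in \eqref{rhotau}--\eqref{functionstau}, I first derive a priori bounds. Testing \eqref{PDEi} with $\eta^i-\tilde g$ and exploiting the Legendre--Fenchel identities $\rho^i\eta^i=\beta(x,\rho^i)+\beta^*(x,\eta^i)$, $\phi^i\cdot\nabla\eta^i=F(x,\phi^i)+F^*(x,\nabla\eta^i)$ together with the convexity estimate $(\rho^i-\rho^{i-1})\eta^i\ge \beta(x,\rho^i)-\beta(x,\rho^{i-1})$, a telescoping sum yields a discrete energy inequality. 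Absorbing the transport contribution $\rho^{i-1}V^i\cdot\nabla\eta^i$ via Young's inequality (using the $L^p$-coercivity of $F^*$ from (H'1) and the $L^{p'}$-coercivity of $\beta$ from (H4)) gives uniform bounds $\rho^\tau\in L^\infty(0,T;L^{p'}(\Omega))$, $\eta^\tau\in L^p(0,T;W^{1,p}(\Omega))$ with $\eta^\tau=g$ on $\Sigma_D$, and $\phi^\tau\in L^{p'}(Q)^N$; testing the discrete equation further with generic $\xi\in W^{1,p}_{\Gamma_D}(\Omega)$ controls $\partial_t\tilde\rho^\tau$ in $L^{p'}(0,T;W^{-1,p'}_{\Gamma_D}(\Omega))$.

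Next I extract weakly convergent subsequences $\rho^\tau,\tilde\rho^\tau\rightharpoonup\rho$, $\eta^\tau\rightharpoonup\eta$, $\phi^\tau\rightharpoonup\phi$. An Aubin--Lions argument with the compact embedding $L^{p'}(\Omega)\hookrightarrow W^{-1,p'}_{\Gamma_D}(\Omega)$ upgrades $\tilde\rho^\tau\to\rho$ strongly in $C([0,T];W^{-1,p'}_{\Gamma_D}(\Omega))$, and a standard comparison shows $\rho^\tau-\tilde\rho^\tau$ is small in the same topology. This strong--weak duality lets me pass to the limit in the transport term $\rho^\tau(\cdot-\tau)V^\tau\cdot\nabla\xi$ since $V\in L^\infty(Q)$, producing the weak formulation \eqref{weaksol} for the limit triplet, with the initial datum $\rho_0$ recovered from the continuity in $W^{-1,p'}_{\Gamma_D}$.

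The remaining task, and the main obstacle, is identifying the two graph inclusions $\rho\in\partial\beta^*(x,\eta)$ and $\phi\in\partial F^*(x,\nabla\eta)$ at the limit. Here assumption (H5) is decisive. Single-valuedness of $\partial\beta^*(x,\cdot)$ makes $\rho^\tau=(\beta^*)'(x,\eta^\tau)$ a Nemytskii relation, and differentiability permits the continuous chain rule
\[
\frac{d}{dt}\int_\Omega \beta(x,\rho)\,dx \;=\; \langle \partial_t\rho,\eta\rangle_{\Omega},
\]
which is the continuous analog of the convexity estimate used at the discrete level. Subtracting the discrete energy identity from this continuous identity (both obtained by pairing with $\eta - \tilde g$) yields the limsup inequality
\[
\limsup_{\tau\to 0}\int_Q \phi^\tau\cdot\nabla\eta^\tau \;\le\; \int_Q \phi\cdot\nabla\eta,
\]
and an analogous limsup for $\int_Q\rho^\tau\eta^\tau$. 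Minty's monotone trick applied to $\partial F^*$ then delivers $\phi\in\partial F^*(x,\nabla\eta)$, and the same mechanism applied to $\partial\beta^*$ (which is pointwise single-valued under (H5)) delivers $\rho=(\beta^*)'(x,\eta)$.

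The delicate point is the rigorous justification of the continuous chain rule for $\beta^*$: although (H5) provides differentiability, $\rho$ lies only in $L^\infty(0,T;L^{p'}(\Omega))$ with $\partial_t\rho$ in $L^{p'}(0,T;W^{-1,p'}_{\Gamma_D}(\Omega))$, and $\eta$ only in $L^p(0,T;W^{1,p}(\Omega))$, so the pairing $\langle\partial_t\rho,\eta\rangle$ must be interpreted via a mollification-and-limit procedure in the spirit of Alt--Luckhaus, treating carefully the non-homogeneous Dirichlet lift $\tilde g$ and the transport coupling $\rho V$. Once this chain rule is secured, monotonicity of $\partial F^*$ and $\partial\beta^*$ closes the identification, producing a weak solution in the sense of Definition \ref{Defsol1}.
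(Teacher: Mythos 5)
Your overall architecture matches the paper's: solve \eqref{PDEi} at each step via Theorem \ref{tduality2}, derive the discrete energy inequality by testing with $\eta^i-\tilde g$ (Lemma \ref{Lrhotproperty}), obtain the a priori bounds of Lemma \ref{Propcompacttau}, extract weak limits, and close the identification of $\partial F^*$ by a Minty argument combined with the chain rule of Lemma \ref{ltech2}. However, there are two genuine gaps in how you deploy (H5).

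First, a circularity. The continuous chain rule $\frac{d}{dt}\int_\Omega\beta(x,\rho)=\langle\partial_t\rho,\eta\rangle_\Omega$ (Lemma \ref{ltech2}) has the inclusion $\eta\in\partial\beta(\cdot,\rho)$ as a \emph{hypothesis}; you invoke it and then claim that the resulting limsup inequality, fed into Minty, \emph{delivers} $\rho\in\partial\beta^*(x,\eta)$. As written this proves the inclusion by assuming it. The paper avoids this by first identifying $\eta\in\partial\beta(\cdot,\rho)$ via the weak Aubin/compensated-compactness result of Andreianov--Canc\`es--Moussa (Theorem \ref{ThBCM}): boundedness of $\partial_t\tilde\rho^\tau$ in $L^{p'}(0,T;W^{-1,p'}_{\Gamma_D}(\Omega))$ plus the weak convergences give $\int_Q\rho^\tau\eta^\tau\varphi\to\int_Q\rho\eta\varphi$, whence the inclusion by monotonicity; only afterwards is the chain rule legitimate. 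Your Aubin--Lions compactness in $C([0,T];W^{-1,p'}_{\Gamma_D}(\Omega))$ could in fact be used to get $\int_Q\rho^\tau\eta^\tau\to\int_Q\rho\eta$ and repair the order of the argument, but you must do this \emph{before} invoking the chain rule.

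Second, and more seriously, the transport term in the energy identity. To obtain $\limsup_\tau\int_Q\phi^\tau\cdot\nabla\eta^\tau\le\int_Q\phi\cdot\nabla\eta$ by subtracting the discrete and continuous energy identities, you must pass to the limit (in the appropriate $\limsup$ sense, condition \eqref{condsuff1}) in $\int_Q\rho^\tau(t-\tau)V^\tau\cdot\nabla\eta^\tau$. This is a product of $\rho^\tau$ and $\nabla\eta^\tau$, \emph{both} of which converge only weakly; your strong convergence of $\tilde\rho^\tau$ in $C([0,T];W^{-1,p'}_{\Gamma_D}(\Omega))$ does not pair with $V^\tau\cdot\nabla\eta^\tau$, which is merely bounded in $L^p(Q)$ and not in $L^p(0,T;W^{1,p})$. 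What is actually needed — and what the paper extracts from (H5) — is strong convergence $\rho^\tau\to\rho$ in $L^{p'}(Q)$ (statement \eqref{convrhostrong}), obtained from part 3 of Theorem \ref{ThBCM}: single-valuedness of $\partial\beta^*$ forces a.e.\ convergence of $\rho^\tau$. You attribute the role of (H5) to the chain rule, but its decisive use in the paper is precisely this strong $L^{p'}(Q)$ compactness, without which the trilinear transport term cannot be handled and the Minty argument does not close.
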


  	\subsection{Proofs of Theorem \ref{texistVnull}, Theorem \ref{texistlinear} and Theorem \ref{texistgen} }

  Building upon the notation established at the beginning of this section, our primary goal is  the (weak) compactness of the sequences $\rho^\tau,$ $\tilde \rho^\tau,$ $\eta^\tau$ and $\phi ^\tau.$ 
   Next, we need to establish the connections between the limits of $\rho^\tau$ and $\eta^\tau$, and also between the limits of $\eta^\tau$ and   $\phi ^\tau.$  The proofs follow from the subsequent lemmas.

  	\begin{lemma}\label{Lrhotproperty}
  		For any $t\in [0,T) ,$ we have 
  	\begin{equation}\label{rhotproperty}
  	\begin{array}{c}
  	\frac{d}{dt}	\int_\Omega ( \beta(.,\rho^{\tau}) -   \rho^{\tau}\: \tilde g  )+ \int_\Omega  \phi ^{\tau}  \cdot \nabla(\eta^{\tau} -\tilde g)\leq 	 \langle f^{\tau},    \eta^{\tau}-\tilde g \rangle_{\Omega}  \\  \\  
  	\hspace*{3cm}	-   \int_\Omega  \rho^{\tau}(t-\tau) V^{\tau}   \cdot \nabla(\eta^{\tau}-\tilde g)     +   \langle \pi,\eta^{\tau}\rangle_{\Gamma_N},\quad \hbox{ in }\D'([0,T)  .  
  	\end{array}
  \end{equation}	   		 
  	\end{lemma}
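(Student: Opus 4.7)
The plan is to derive the stated inequality by going back to the discrete level, testing the $i$-th step PDE with a judicious test function, and then exploiting the convex duality between $\beta$ and $\beta^*$ to replace a bilinear quantity by a true discrete time increment of the internal energy.

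First, I would fix $i\in\{1,\dots,n\}$ and use $\xi=\eta^i-\tilde g$ as a test function in the discrete equation \eqref{PDEi}; this is licit because $\eta^i\in W^{1,p}(\Omega)$, $\tilde g\in W^{1,p}(\Omega)$ and $\eta^i=g=\tilde g$ on $\Gamma_D$, so $\eta^i-\tilde g\in W^{1,p}_{\Gamma_D}(\Omega)$. Using that $\tilde g=0$ on $\Gamma_N$ to rewrite the boundary term as $\langle \pi,\eta^i\rangle_{\Gamma_N}$, and regrouping the $\overline f^i$ contribution with $f^i_0$ into the duality bracket $\langle f^i,\eta^i-\tilde g\rangle_\Omega$, I obtain
\begin{equation*}
\int_\Omega(\rho^i-\rho^{i-1})(\eta^i-\tilde g)+\tau\int_\Omega\phi^i\cdot\nabla(\eta^i-\tilde g)=\tau\langle f^i,\eta^i-\tilde g\rangle_\Omega-\tau\int_\Omega\rho^{i-1}V^i\cdot\nabla(\eta^i-\tilde g)+\tau\langle\pi,\eta^i\rangle_{\Gamma_N}.
\end{equation*}

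Next I would invoke the convex duality provided by Theorem \ref{tduality2}: the relation $\rho^i\in\partial\beta^*(x,\eta^i)$ is equivalent to $\eta^i\in\partial\beta(x,\rho^i)$, so the subgradient inequality at the point $\rho^i$ applied to $\rho^{i-1}$ yields
\begin{equation*}
\beta(x,\rho^i)-\beta(x,\rho^{i-1})\leq\eta^i\,(\rho^i-\rho^{i-1})\quad\text{a.e. in }\Omega.
\end{equation*}
Subtracting $\tilde g\,(\rho^i-\rho^{i-1})$ from both sides, integrating over $\Omega$, and combining with the tested discrete PDE above, I arrive at the discrete inequality
\begin{equation*}
\int_\Omega\!\!\big[(\beta(\cdot,\rho^i)-\rho^i\tilde g)-(\beta(\cdot,\rho^{i-1})-\rho^{i-1}\tilde g)\big]+\tau\!\!\int_\Omega\!\!\phi^i\cdot\nabla(\eta^i-\tilde g)\leq\tau\langle f^i,\eta^i-\tilde g\rangle_\Omega-\tau\!\!\int_\Omega\!\!\rho^{i-1}V^i\cdot\nabla(\eta^i-\tilde g)+\tau\langle\pi,\eta^i\rangle_{\Gamma_N}.
\end{equation*}

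Finally I would pass from this discrete step inequality to the claimed distributional inequality. The point is that, for $t\in]t_{i-1},t_i]$, all the piecewise constant curves defined in \eqref{functionstau} take their $i$-th value, and in particular $\rho^\tau(t-\tau)=\rho^{i-1}$, so the right-hand side of the last display, divided by $\tau$, is exactly the value at time $t$ of the right-hand side in \eqref{rhotproperty}. For the left-hand side I would use the elementary lemma: if $E:[0,T)\to\RR$ is left-continuous, piecewise constant with value $E^i$ on $]t_{i-1},t_i]$ and $E(0)=E^0$, and $R^\tau$ is a piecewise constant function with $\int_{t_{i-1}}^{t_i}R^\tau\geq E^i-E^{i-1}$ for each $i$, then $dE/dt\leq R^\tau$ in $\mathcal D'([0,T))$ (this is a direct integration by parts against a nonnegative test function $\varphi\in\mathcal D([0,T))$). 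Applying this with $E(t):=\int_\Omega(\beta(\cdot,\rho^\tau(t))-\rho^\tau(t)\,\tilde g)$ and $R^\tau(t)$ equal to the full right-hand side of \eqref{rhotproperty} minus $\int_\Omega\phi^\tau\cdot\nabla(\eta^\tau-\tilde g)$ gives the conclusion.

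The only non-routine step is the convex-duality inequality for the discrete time increment of $\beta$, which is really the heart of the argument; the subsequent passage from $i$-by-$i$ estimates to a distributional inequality is, by contrast, a standard book-keeping step that only uses left-continuity of $\rho^\tau$.
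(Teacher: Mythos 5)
Your proof is correct and follows essentially the same route as the paper: testing the discrete step equation with $\eta^i-\tilde g$ (which is the content of Corollary \ref{CorEst}), applying the subgradient inequality $\eta^i(\rho^i-\rho^{i-1})\geq\beta(\cdot,\rho^i)-\beta(\cdot,\rho^{i-1})$, and summing into a distributional inequality. You are in fact somewhat more explicit than the paper about the final passage from the $i$-by-$i$ estimates to the $\mathcal D'([0,T))$ statement.
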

  	\begin{proof} 
  Thanks to Corollary \ref{CorEst}, we know that 
  	\begin{equation} \label{eqextremei}
  	\begin{array}{c}
  		\int_\Omega  \rho^{i} \: (\eta^{i}-\tilde g)+  \tau\: \int_\Omega  \phi^{i}\cdot \nabla (\eta^{i} -\tilde g)  
  		=  	\int_\Omega  \rho^{i-1} \: (\eta^{i}-\tilde g) +	\tau \: \langle  f^{i},\eta^{i}-\tilde g\rangle_{\Omega} \\  \\  - \int_\Omega \rho^{i-1}\: V^{i}\cdot \nabla(\eta^{i}-\tilde g)     + \langle \pi,\eta^{i}\rangle_{\Gamma_N}     .    
  	\end{array} 
  \end{equation}
Since $(\rho^{i}-\rho^{i-1})\eta^{i}\geq \beta(.,\rho^{i})-  \beta(.,\rho^{i-1}),$ this  implies that 	 
  		\begin{equation}\label{eqextreme2}
  		\begin{array}{c}
  			\int_\Omega  ( \beta(.,\rho^{i}) - 	  \rho^{i} \: \tilde g ) -  	\int_\Omega  ( \beta(.,\rho^{i-1}) - 	  \rho^{i-1} \: \tilde g ) + \tau  \int_\Omega  \phi ^{i}  \cdot \nabla(\eta^{i} -\tilde g)\leq    \tau \langle  f^{i}   ,\eta^{i}-\tilde g\rangle_{\Omega}  \\  \\ 
  			-\tau  \int_\Omega  \rho^{i-1}V^{i}   \cdot \nabla(\eta^{i}-\tilde g)     +\tau   \langle \pi,\eta^{i}\rangle_{\Gamma_N}   .  
  		\end{array} 
  	  	\end{equation}	
  	  Thus \eqref{rhotproperty}. 
  	     	\end{proof}
  	
  	\begin{lemma}\label{Propcompacttau}
  		We have 
  		\begin{enumerate}
  			\item There exists a constant $C=C(p,N,\Omega,\Vert V\Vert_\infty)$ such that :  
  			\begin{equation}\label{Estpple}
  				\begin{array}{c}
  					C\left\{ 	\int_\Omega  (\vert  \rho^{\tau}(t)\vert -M)^{+p'}  +\int_0^T\!\! \int_\Omega  \vert \nabla\eta^{\tau}\vert^p \right\} \leq   	B\left( 1+      \frac{  T\:     \Vert V^{\tau}\Vert_\infty^{p'}}{C} \:  \exp\left( T  \Vert V^{\tau}\Vert_\infty^{p'}/C\right)  \right) ,
  				\end{array}  
  			\end{equation}	 
  			where $B$ is given by 
  			$$ B = \int_\Omega   \beta(.,\rho_0) +	\int_\Omega  \vert \rho_0-M\vert^{p'}      +  	\int_\Omega   \vert \tilde g\vert^p     +  T\: \Vert V\Vert_\infty^{p'} \int_\Omega \vert  \nabla\tilde g\vert^{p}      
  			+T \:  \Vert \pi\Vert_{W^{-1/p',p'}(\Gamma_N)}^{p'}  $$  $$+ \tau  \int_\Omega  (\vert \rho_0 \vert  -M)^{+p}  + \sup_{0<\tau<1}\int_0^T\!\! 	\int_\Omega (  \vert  f^{\tau}_0  \vert ^{p'} +  \vert \overline  f^{\tau}  \vert ^{p'} ) .    $$ 		  
  	 \item  The sequences $\rho^\tau$  and $\tilde \rho^\tau$ are bounded in $L^\infty\left(0,T; L^{p'}(\Omega) \right),$     $\eta^\tau$   is bounded in  $L^p\left(0,T; W ^{1,p} (\Omega)  \right)$ and $\phi ^\tau$   in bounded in $ L^{p'}(Q)^N.$  
  			\item The sequence $\partial_t \tilde \rho^\tau$ is  bounded in   $L^{p'}\left(0,T;W ^{-1,p'}_{\Gamma_D}(\Omega)  \right).$   
  		\end{enumerate}
  	\end{lemma}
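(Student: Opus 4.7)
The plan is to derive all three assertions from the discrete dissipation inequality \eqref{rhotproperty} by integrating it in time, invoking the Fenchel duality to bring out the coercive quantities $|\nabla\eta^\tau|^p$ and $|\phi^\tau|^{p'}$, and then applying Young's inequality together with a discrete Gronwall argument to close the estimate. The boundedness of the discrete time derivative will follow from the weak form of \eqref{PDEi} and assertion (2).

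To obtain \eqref{Estpple}, I would sum \eqref{eqextreme2} from $i=1$ to $k$ (with $t_k=k\tau$). The extremality relation $\phi^i\in\partial_\xi F^*(\cdot,\nabla\eta^i)$ gives the Fenchel equality $\phi^i\cdot\nabla\eta^i=F(\cdot,\phi^i)+F^*(\cdot,\nabla\eta^i)$, so assumptions $(H1)$ and $(H'1)$ bound the accumulated dissipation from below by a multiple of $\|\phi^\tau\|_{L^{p'}}^{p'}+\|\nabla\eta^\tau\|_{L^p}^p$ on $(0,t_k)\times\Omega$. On the right, $(H4)$ yields $\|\rho^\tau\|_{p'}^{p'}\le C\int_\Omega\beta(\cdot,\rho^\tau)+C\,M^{p'}|\Omega|$. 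The source term $\langle f^\tau,\eta^\tau-\tilde g\rangle_\Omega$, the corrector $\int\phi^\tau\cdot\nabla\tilde g$ and the Neumann contribution $\langle\pi,\eta^\tau\rangle_{\Gamma_N}$ are split by Young's inequality with a small parameter $\varepsilon$, chosen so that the resulting $\varepsilon\|\nabla\eta^\tau\|_p^p$ and $\varepsilon\|\phi^\tau\|_{p'}^{p'}$ are absorbed into the dissipation; the boundary term is controlled using the continuity of the trace and the representation \eqref{decompositionH}.

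The delicate step is the transport integral $\int \rho^\tau(\cdot-\tau)V^\tau\cdot\nabla(\eta^\tau-\tilde g)$: Young produces $C_\varepsilon\|V^\tau\|_\infty^{p'}\|\rho^\tau(\cdot-\tau)\|_{p'}^{p'}$, bounded via $(H4)$ by $C\int\beta(\cdot,\rho^\tau(\cdot-\tau))$ up to an $M$-dependent constant. This yields, for $\Phi_k:=\int_\Omega(|\rho^\tau(t_k)|-M)^{+p'}+\sum_{i\le k}\tau\int_\Omega|\nabla\eta^i|^p$, an inequality of the form
\begin{equation*}
\Phi_k \le \tfrac{B}{C}+\tfrac{\|V\|_\infty^{p'}}{C}\sum_{j<k}\tau\,\Phi_j,
\end{equation*}
and the discrete Gronwall lemma produces the exponential factor in \eqref{Estpple}. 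Assertion (2) then follows: the $L^\infty(0,T;L^{p'})$ bounds on $\rho^\tau$ (and $\tilde\rho^\tau$, which is a convex combination of consecutive $\rho^i$) come from the first term of \eqref{Estpple} via $|\rho^\tau|^{p'}\le C(|\rho^\tau|-M)^{+p'}+C\,M^{p'}$; the $L^p(0,T;W^{1,p}(\Omega))$ bound on $\eta^\tau$ comes from the second term combined with the fixed Dirichlet trace $\tilde g$ and Poincaré on $W^{1,p}_{\Gamma_D}(\Omega)$; and $\phi^\tau$ is bounded in $L^{p'}(Q)^N$ via $C_1|\phi^\tau|^{p'}\le F(\cdot,\phi^\tau)\le\phi^\tau\cdot\nabla\eta^\tau$ followed by Young.

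For assertion (3), the weak form of \eqref{PDEi} identifies $\partial_t\tilde\rho^\tau$ as the element of $W^{-1,p'}_{\Gamma_D}(\Omega)$ defined by
\begin{equation*}
\langle\partial_t\tilde\rho^\tau,\xi\rangle_\Omega=\int_\Omega f_0^\tau\,\xi+\langle\pi,\xi\rangle_{\Gamma_N}-\int_\Omega(\phi^\tau+\rho^\tau(\cdot-\tau)V^\tau+\overline f^\tau)\cdot\nabla\xi,\qquad \xi\in W^{1,p}_{\Gamma_D}(\Omega).
\end{equation*}
Taking the supremum over the unit ball of $W^{1,p}_{\Gamma_D}(\Omega)$ and using the $L^{p'}(Q)$ bounds of (2) together with the continuity of the normal-trace pairing on $\Gamma_N$ delivers the desired uniform bound in $L^{p'}(0,T;W^{-1,p'}_{\Gamma_D}(\Omega))$. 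The main obstacle is the closure of the Gronwall step: the Young constants must be selected with care so that the transport contribution appears exactly with prefactor $\|V\|_\infty^{p'}/C$, while every other lower-order term is simultaneously absorbed to match the explicit form of the bound $B$ in the statement.
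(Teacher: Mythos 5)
Your proposal is correct and follows essentially the same route as the paper: both start from the discrete dissipation inequality \eqref{eqextreme2} (equivalently \eqref{rhotproperty}) summed/integrated in time, extract coercivity via the Fenchel relation and $(H1)$, $(H'1)$, $(H4)$, absorb the source, boundary and transport terms by Young--Poincaré with a small parameter, and close with a Gronwall argument (discrete in your write-up, continuous-in-form in the paper) before reading off assertion (3) from the weak form of \eqref{PDEi}. The only cosmetic difference is that you extract the $L^{p'}$ bound on $\phi^\tau$ directly from the Fenchel equality inside the main estimate, whereas the paper first isolates $F^*(\cdot,\nabla\eta^\tau)$ via the subdifferential inequality and recovers the flux bound afterwards.
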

  	\begin{proof} 
 Using Lemma \ref{Lrhotproperty},  the assumption $(H1)$ and $(H4),$ and the fact that $ \phi ^{\tau}  \cdot \nabla(\eta^{\tau}-\tilde g)  \geq F^*(\nabla \eta^\tau)-F^*(\nabla \tilde g) ,$ we get  
  		\begin{equation}
  		\begin{array}{c}
  		C_3\: 	\int_\Omega  ( \vert \rho^{\tau}(t)\vert -M)^{+p'}  +C_1\: \int_0^t\!\! \int_\Omega  \vert \nabla\eta^{\tau}\vert^p \leq  	\int_\Omega   \beta(.,\rho_0)  - 	\int_\Omega  \rho_0 \: \tilde g  +	\int_\Omega  \rho^{\tau}(t) \: \tilde g   +  T \int_\Omega F^*(., \nabla\tilde g)        \\  \\ 
  		+ \int_0^T\langle f^{\tau} ,\eta^{\tau}-\tilde g\rangle_{\Omega } -\int_0^T\!\!   \int_\Omega  \rho^{\tau}(t-\tau) V^{\tau}   \cdot \nabla(\eta^{\tau}-\tilde g)     +\int_0^T\!\!   \langle \pi,\eta^{\tau}\rangle_{\Gamma_N},   
  		\end{array}
  	\end{equation}	  
   for any  $t\in [0,T) ,$  and  then 
  	 	\begin{equation}
  	 	\begin{array}{c}
  	 	C_3\:  	\int_\Omega  ( \vert \rho^{\tau}(t)\vert -M)^{+p'}  +C_1\: \int_0^T\!\! \int_\Omega  \vert \nabla\eta^{\tau}\vert^p \leq  	\int_\Omega   \beta(.,\rho_0)  +	\int_\Omega  (M-\rho_0) \: \tilde g  +	\int_\Omega  ( \vert \rho^{\tau}(t)\vert -M)^+ \: \tilde g   \\  \\     +  T \int_\Omega F^*(., \nabla\tilde g)   + \int_0^T\langle    f^{\tau} ,\eta^{\tau}-\tilde g\rangle_{\Gamma_N}     
  	 		-\int_0^T\!\!   \int_\Omega  \rho^{\tau}(t-\tau) V^{\tau}   \cdot \nabla(\eta^{\tau}-\tilde g)     +\int_0^T\!\!   \langle \pi,\eta^{\tau}\rangle_{\Gamma_N}, 
  	 	\end{array}
  	 \end{equation}	 
   for any $t\in [0,T).$ 
  	 Using Young and Poincaré inequalities,  we can find  a constant $C=C(p,N,\Omega)>0$ such that  
  	\begin{equation}\label{Esttau}
  	\begin{array}{c}
  	C\left\{ 	\int_\Omega  (\vert  \rho^{\tau}(t)\vert -M)^{+p'}  +\int_0^T\!\! \int_\Omega  \vert \nabla\eta^{\tau}\vert^p \right\} \leq   	\int_\Omega   \beta(.,\rho_0) +	\int_\Omega  \vert \rho_0-M\vert^{p'}      +  	\int_\Omega   \vert \tilde g\vert^p   +  T \int_\Omega \vert  \nabla\tilde g\vert^{p}        \\  \\   
  	     +T \:  \Vert \pi\Vert_{W^{-1/p',p'}(\Gamma_N)}^{p'}  + \int_0^T\!\! 	\int_\Omega  ( \vert  f^{\tau}_0  \vert ^{p'}    + \Vert \overline f\Vert^{p'}) 	+\int_0^T\!\!   \int_\Omega  \rho^{\tau}(t-\tau) V^{\tau}   \cdot \nabla(\eta^{\tau}-\tilde g)   , \quad \hbox{ for any } t\in [0,T),  
  	\end{array}
  \end{equation}	 	 
 where we use also the fact that  $  F^*(., \nabla\tilde g) \leq C'_2\Vert \nabla \tilde g\vert^{p} .$ On the other hand, we see that the last term satisfies 
  	\begin{equation}
   	\begin{array}{c}
   		\int_0^T\!\!   \int_\Omega  \rho^{\tau}(t-\tau) V^{\tau}   \cdot \nabla(\eta^{\tau}-\tilde g) \leq    	\int_0 ^T\!\!   \int_\Omega  (\vert \rho^{\tau}(t-\tau) \vert  -M)^+  \vert V^{\tau}\vert (\vert \nabla \eta^{\tau}\vert + \vert \nabla \tilde g\vert) \\  \\  + M  \: 	\int_0^T\!\!   \int_\Omega  \vert V^{\tau}\vert \: (\vert \nabla \eta^\tau\vert +\vert \nabla \tilde g\vert) .
   	\end{array}
   \end{equation}	 	
   Using again Young inequality and the fact that  $$	\int_0 ^T\!\!   \int_\Omega  (\vert \rho^{\tau}(t-\tau) \vert  -M)^{+p} \leq 	\int_0^T\!\!   \int_\Omega  (\vert \rho^{\tau}(t) \vert  -M)^{+p} + \tau  \int_\Omega  (\vert \rho_0 \vert  -M)^{+p} ,$$  for any $\eps>0$ small enough,  we can find  a constant that we denote   by $C_\epsilon=C_\epsilon(p,N,\Omega,\epsilon)>0,$ such that 
 	\begin{equation}
 	\begin{array}{c}
 	C_\epsilon \int_\tau ^T\!\!   \int_\Omega  \rho^{\tau}(t-\tau) V ^{\tau}  \cdot \nabla(\eta^{\tau}-\tilde g) \leq  \Vert V^{\tau}\Vert_\infty^{p'} \: \left\{  	 \int_0^T\!\!   \int_\Omega     (\vert  \rho^{\tau}(t)\vert -M)^{+p'} +  \epsilon \:  \int_0^T\!\!   \int_\Omega  \vert \nabla \eta^{\tau} \vert^p   \right. \\  \\ \left.     +       \epsilon \: T  \:     \int_\Omega  \vert \nabla \tilde g \vert^p + \tau  \int_\Omega  (\vert \rho_0 \vert  -M)^{+p} \right\} .
 	\end{array}
 \end{equation}	 	 
 Working with a fixed small $\epsilon,$ and combining this with \eqref{Esttau},  we can find  a constant   $C=C(p,N,\Omega,\Vert V\Vert_\infty )>0,$ such that   
   	\begin{equation}\label{Esttau1}
   	\begin{array}{c}
   			C\left\{ 	\int_\Omega  (\vert  \rho^{\tau}(t)\vert -M)^{+p'}  +\int_0^T\!\! \int_\Omega  \vert \nabla\eta^{\tau}\vert^p \right\} \leq   	B+          \Vert V^{\tau}\Vert_\infty^{p'} \: 	 \int_0^T\!\!   \int_\Omega     (\vert  \rho^{\tau}(t)\vert -M)^{+p'}    .
	\end{array}  
   \end{equation}	 	    
  This implies that 
  \begin{equation}
  	 C\:  \int_\Omega  (\vert  \rho^{\tau}(t)\vert -M)^{+p'} \leq  B\: \exp\left( T  \Vert V^{\tau}\Vert_\infty^{p'}/C\right) 
  \end{equation}
and then,   \eqref{Estpple} follows by \eqref{Esttau1}.  As to  $	\partial_t 	\tilde \rho ^\tau,$ we recall that 
  $$	\partial_t 	\tilde \rho ^\tau =     f_0 ^\tau +\nabla \cdot (\rho^\tau(.-\tau)V^{\tau} + \phi   ^\tau+   \overline f^\tau)   $$
with   $  (\phi    ^\tau+\rho^\tau(.-\tau)V^{\tau}  +\overline f^\tau)  \cdot \nu  =  \pi $  on  $\Sigma_{N  }.$ Using  \eqref{Estpple} and $(H2),$   one sees easily that  $\phi    ^\tau+\rho^\tau(.-\tau)V^{\tau}  +\overline f^\tau $ is bounded in $L^{p'}(Q).$   This implies that $	\partial_t 	\tilde \rho ^\tau $ is bounded in $L^{p'}\left(0,T;W ^{-1,p'}_{\Gamma_D}(\Omega) \right).$

  	\end{proof}

  	\begin{lemma} \label{lcondsol1}
  		There exists   $\rho\in L^{\infty}\left(0,T; L^{p'}(\Omega) \right)$,  $\eta\in L^p\left(0,T; W ^{1,p}_{\Gamma_D}(\Omega)  \right),$  $\phi \in L^{p'}(Q)^{N},$ and  subsequences that we denote by again $\rho^\tau ,$  $\tilde \rho^\tau $  and $\eta^\tau,$  such that 
  		\begin{equation}\label{convrho}
  			\rho^\tau \to \rho,\quad \hbox{ in } L^{p'}(Q)  -\hbox{weak}, 
  		\end{equation} 
  			\begin{equation}\label{convtrho}
  			\tilde \rho^\tau \to \rho,\quad \hbox{ in } L^{p'}(Q)  -\hbox{weak}, 
  		\end{equation}  
  		\begin{equation}\label{conveta}
  			\eta^\tau \to \eta,\quad \hbox{ in } L^p\left(0,T; W ^{1,p}_{\Gamma_D}(\Omega)  \right)-\hbox{weak} 
  		\end{equation} 
and \begin{equation}\label{convphi}
	\phi ^\tau \to \phi ,\quad \hbox{ in } L^{p'}(Q)^{N}-\hbox{weak} .
\end{equation} 
  	Moreover,  we have 
  	\begin{enumerate} 
  		\item   	 $\eta\in \partial \beta(.,\rho),$ a.e. in $Q$  
  		\item Under the assumption $(H5),$ we have 
  			\begin{equation}\label{convrhostrong}
  			\rho^\tau \to \rho,\quad \hbox{ in } L^{p'}(Q) . 
  		\end{equation}  
\item   If  \begin{equation}\label{condphi}
	 \phi \in \partial F^*(.,\nabla \eta), \quad \hbox{ a.e. in }Q, 
\end{equation}
  then the  triplet  $(\rho,\eta,\phi )$ is a solution of the problem  \ref{FVnull}.  
  
    	\end{enumerate}
    		\end{lemma}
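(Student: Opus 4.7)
The plan is to extract weakly convergent subsequences using the bounds from Lemma \ref{Propcompacttau}, then identify the limits via a Minty-type monotonicity argument, for which the main obstacle is obtaining the crucial cross-term limit $\int_Q \rho^\tau \eta^\tau \to \int_Q \rho\eta$.

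\textbf{Step 1 (weak compactness).} Since $\rho^\tau,\tilde\rho^\tau$ are bounded in $L^\infty(0,T;L^{p'}(\Omega))$, $\eta^\tau$ in $L^p(0,T;W^{1,p}(\Omega))$ and $\phi^\tau$ in $L^{p'}(Q)^N$, standard Banach-Alaoglu arguments give subsequences converging weakly to $\rho,\tilde\rho,\eta,\phi$ respectively. That $\tilde\rho^\tau$ and $\rho^\tau$ share the same weak limit follows from the elementary identity $\tilde\rho^\tau(t)-\rho^\tau(t)=\frac{t-t_i}{\tau}(\rho^i-\rho^{i-1})$ on $]t_{i-1},t_i]$ combined with the bound on $\partial_t\tilde\rho^\tau$ in $L^{p'}(0,T;W^{-1,p'}_{\Gamma_D}(\Omega))$, which shows $\|\tilde\rho^\tau-\rho^\tau\|_{L^{p'}(0,T;W^{-1,p'}_{\Gamma_D})}\to 0$. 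The trace constraint $\eta=g$ on $\Sigma_D$ passes to the limit since $\eta^\tau-\tilde g\in L^p(0,T;W^{1,p}_{\Gamma_D}(\Omega))$, so the limit belongs to this closed subspace.

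\textbf{Step 2 (Aubin--Lions and the cross-term limit).} Since $\tilde\rho^\tau$ is bounded in $L^\infty(0,T;L^{p'}(\Omega))$ and $\partial_t\tilde\rho^\tau$ is bounded in $L^{p'}(0,T;W^{-1,p'}_{\Gamma_D}(\Omega))$, with the compact embedding $L^{p'}(\Omega)\hookrightarrow W^{-1,p'}_{\Gamma_D}(\Omega)$, the Aubin--Lions lemma yields strong compactness of $\tilde\rho^\tau$ in $C([0,T];W^{-1,p'}_{\Gamma_D}(\Omega))$. Combined with weak convergence of $\eta^\tau$ in $L^p(0,T;W^{1,p}_{\Gamma_D}(\Omega)+\{\tilde g\})$, this gives the key convergence
\begin{equation}
\int_Q \tilde\rho^\tau(\eta^\tau-\tilde g)\,dx\,dt\;\longrightarrow\;\int_Q \rho(\eta-\tilde g)\,dx\,dt.
\end{equation}
Using the $W^{-1,p'}$-closeness of $\tilde\rho^\tau$ and $\rho^\tau$ established in Step 1, the same limit holds with $\rho^\tau$ in place of $\tilde\rho^\tau$.

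\textbf{Step 3 (Minty trick for $\eta\in\partial\beta(.,\rho)$).} For any admissible test pair $(\tilde\rho,\tilde\eta)$ with $\tilde\rho\in\partial\beta^*(x,\tilde\eta)$ a.e., monotonicity of $\partial\beta^*(x,\cdot)$ yields $\int_Q(\rho^\tau-\tilde\rho)(\eta^\tau-\tilde\eta)\ge 0$. Passing to the limit using the cross-term convergence of Step 2 (and weak convergence of the other terms) gives $\int_Q(\rho-\tilde\rho)(\eta-\tilde\eta)\ge 0$ for all such $(\tilde\rho,\tilde\eta)$; by maximal monotonicity of $\partial\beta^*(x,\cdot)$, we conclude $\rho\in\partial\beta^*(x,\eta)$, equivalently $\eta\in\partial\beta(x,\rho)$, a.e. in $Q$.

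\textbf{Step 4 (Strong convergence under (H5) and conclusion).} Assumption (H5) makes $\partial\beta^*(x,\cdot)$ single-valued and strictly monotone in the sense that equality in Young's inequality $\beta^*(x,\eta^\tau)+\beta(x,\rho^\tau)=\rho^\tau\eta^\tau$ identifies $\rho^\tau$ uniquely from $\eta^\tau$. Using the convergence of $\int_Q\rho^\tau\eta^\tau$ established above together with lower semicontinuity of $\int_Q\beta^*(.,\eta^\tau)$ and $\int_Q\beta(.,\rho^\tau)$, one deduces $\int_Q\beta(.,\rho^\tau)\to\int_Q\beta(.,\rho)$, which combined with the superlinear coercivity in $(H4)$ and strict convexity upgrades weak to strong convergence in $L^{p'}(Q)$ by a standard uniform convexity/Visintin-type argument. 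Finally, assuming \eqref{condphi}, we pass to the limit in the weak formulation of \eqref{PDEi}: all linear terms pass by weak convergence, the product $\rho^\tau(\cdot-\tau)V^\tau$ converges weakly in $L^{p'}(Q)^N$ since $V^\tau\to V$ strongly in $L^p(Q)$ by Lebesgue differentiation while $\rho^\tau(\cdot-\tau)\rightharpoonup\rho$ in $L^{p'}(Q)$, and $\rho^\tau(\cdot-\tau)V^\tau\to\rho V$ in the distributional sense suffices against $\nabla\xi$. The initial condition $\rho(0)=\rho_0$ follows from $\tilde\rho^\tau(0)=\rho_0$ and the continuous convergence in $C([0,T];W^{-1,p'}_{\Gamma_D}(\Omega))$. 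The hard part is Step 3, where the Aubin--Lions based cross-term limit of Step 2 is the essential ingredient and without it the monotonicity argument collapses.
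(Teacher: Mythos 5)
Your proposal is correct, but it identifies the nonlinearity by a genuinely different route than the paper. Where you invoke the classical Aubin--Lions--Simon lemma (boundedness of $\tilde \rho^\tau$ in $L^\infty(0,T;L^{p'}(\Omega))$ plus the bound on $\partial_t\tilde\rho^\tau$ in $L^{p'}(0,T;W^{-1,p'}_{\Gamma_D}(\Omega))$, with the compact embedding $L^{p'}(\Omega)\hookrightarrow W^{-1,p'}_{\Gamma_D}(\Omega)$) to get strong compactness in $C([0,T];W^{-1,p'}_{\Gamma_D}(\Omega))$, then the strong--weak duality pairing for the cross term and finally Minty's trick, the paper instead applies in one stroke the nonlinear compactness result of Andreianov--Canc\`es--Moussa (Theorem A.1 in the appendix): that theorem simultaneously yields $\int_Q\rho^\tau\eta^\tau\varphi\to\int_Q\rho\eta\varphi$, the graph identification $\eta\in\partial\beta(\cdot,\rho)$, and --- via its third assertion on single-valued graphs --- the a.e.\ convergence of $\rho^\tau$ under $(H5)$, which is exactly how the paper obtains \eqref{convrhostrong}. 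Your route buys self-containedness (only classical compactness tools) and gives the initial condition $\rho(0)=\rho_0$ for free from the $C([0,T];W^{-1,p'}_{\Gamma_D}(\Omega))$ convergence; the paper's route is shorter and needs only the weaker ``very weak'' time estimate rather than the full dual bound on $\partial_t\tilde\rho^\tau$. The one place where your argument is thinner than it should be is Step 4: to run the Visintin-type upgrade from weak to strong convergence you need strict convexity of $\beta(x,\cdot)$, which does follow from $(H5)$ (differentiability of $\beta^*(x,\cdot)$ excludes affine pieces of $\beta(x,\cdot)$, since an affine piece of $\beta$ produces a corner of $\beta^*$), and you also need equi-integrability of $|\rho^\tau|^{p'}$, which follows from $(H4)$ together with the $L^1$-convergence of $\beta(\cdot,\rho^\tau)$ obtained from the energy identity $\int\beta(\cdot,\rho^\tau)+\int\beta^*(\cdot,\eta^\tau)=\int\rho^\tau\eta^\tau$ and weak lower semicontinuity of both terms; these two links should be made explicit, but once they are, the step closes and the conclusion stands.
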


    		Thanks to Lemma \ref{Propcompacttau},  the weak compactness of the approximations  $\tilde \rho^\tau,$ $\rho^\tau,$ $\eta^\tau$ and $\phi ^\tau$ is clear. Now, in order to connect  the limit of $\rho^\tau,$ and $\eta^\tau$  through the  non-linearity $\partial \beta,$ we use the   very weak Aubin's result du to Andreanov and al. (cf. Theorem A.1 in the appendix, the proof may be found  in \cite{ACM} or \cite{Moussa}).

    \bigskip 		
    		
  	\begin{proofth}{Proof of Lemma \ref{lcondsol1}}
  		The weak compactness of $\rho^\tau$, $\tilde \rho^\tau$   and $\eta^\tau$  follows clearly  by Lemma \ref{Propcompacttau}.  Moreover, since $$\tilde \rho^k(t)-\rho^\tau(t) =  (t-t_i)\: \partial_t \tilde \rho^k(t)     ,\quad \hbox{ for any }t\in ]t_{i-1},t_{i}],\: i=1,... n.  $$  
  	and $\partial_t \tilde \rho^\tau$ is bounded in $L^{p'} (0,T;W ^{-1,p'}_{\Gamma_D}(\Omega)^*  )$, we deduce that $\rho^\tau$ and $\tilde \rho^\tau$ have the same $L^{p'}(Q)$-weak limits.   Now, having in mind Theorem \ref{ThBCM}, we combine  \eqref{convrho}, \eqref{conveta} with the fact that   $\partial_t \tilde \rho^\tau$ is bounded in $L^{p'} (0,T;W ^{-1,p'}_{\Gamma_D}(\Omega)^*  )$   to deduce first that, as $\tau\to 0,$ we have  
  		\begin{equation}
  			\int_0^T\!\!\int_\Omega   \rho ^\tau \: \eta ^\tau \: \varphi\: dxdt \to 	\int_0^T\!\!\int_\Omega \rho \: \eta  \: \varphi\: dxdt  \quad \hbox{ for any }\varphi\in \D ( Q). 
  		\end{equation} 
  		Then, using the fact that  
  		$  \eta^\tau \in \partial \beta(   \rho^\tau),$  we deduce that   
  		\begin{equation}
  		    \eta  \in \partial \beta( .,\rho),\quad \hbox{ a.e. in }Q. 
  		\end{equation}
 	Thanks to \eqref{rhotproperty} and \eqref{Estpple}, we see that $\int F^*(.,\phi ^\tau)$ is bounded. Combining this with  with the assumption $(H2)$,   we deduce that   $\phi ^\tau$   bounded in $L^{p'}(Q),$ and \eqref{convrhostrong} follows.      
 Remember that, for any $t\in (0,T)$ and $\xi\in W^{1,p}_{\Gamma_D}(\Omega),$ we have  
 $$  \int _\Omega 	\partial_t 	\tilde \rho ^\tau (t) \xi +\int_\Omega \phi   ^\tau (t) \cdot \nabla \xi  = \langle  f  ^\tau(t) ,\xi\rangle_{\Omega} - \int_\Omega  (\rho^\tau(t-\tau)V^\tau(t)\cdot \nabla \xi + \langle \pi,\xi\rangle_{\Gamma_N}  ,$$
and then 
\begin{equation}\label{eqtau1}
\begin{array}{c} 	  \frac{d}{dt} \int _\Omega  	\tilde \rho ^\tau (t) \xi +\int_\Omega (\phi   ^\tau (t) +  \rho^\tau(t) V^\tau(t)) \cdot \nabla \xi  = <f  ^\tau(t) , \xi>  +  \langle \pi,\xi\rangle_{\Gamma_N}  \\ \\ + \underbrace{\int_\Omega (\rho^\tau(t) - \rho^\tau(t-\tau) )V^\tau(t) \cdot \nabla \xi }_{J_\tau(t)} ,\quad \hbox{ in }\D'([0,T))  .
\end{array} 
\end{equation}   
Using the fact that  $\rho^\tau$ is bounded in $L^{p'}(Q)$ and $V^\tau$ is relatively compact in $L^p(Q),$ we see that  $J_\tau\to 0$ weakly. 
So, passing to the limit in \eqref{eqtau1}, we deduce that   the triplet $(\rho,\eta,\phi )$ satisfies  $\rho  \in  L^\infty(0,T;L^{p'}(\Omega)),$ $\partial_t \rho \in  W^{1,p'}(0,T; W^{-1,p'}_{\Gamma_D}(\Omega)) ,$ $\rho(0)=\rho_0$  in $\Omega,$  $\eta \times L^p(0,T;W^{1,p}(\Omega)),$  $\eta=g$ on $\Sigma_D,$ $\eta\in \partial \beta(x,\rho),$ a.e. in $Q,$  $ \phi  \times  L^{p'}(Q)$   and we have 
\begin{equation}  
	\frac{d}{dt}\int_\Omega \rho\: \xi\: dx + \int_\Omega   (\phi  +\rho\: V)  \cdot \nabla \xi \: dx = \langle f,\xi\rangle_{\Omega} + \langle \pi,\xi\rangle_{\Gamma_N} ,\quad \hbox{ in }\D'([0,T)), \: \forall \xi\in W^{1,p}_{\Gamma_D}(\Omega).
\end{equation}
  At last it is clear if   $\phi = \partial F(.,\nabla \eta),$ then  the triplet is definitely a solution of the problem  \eqref{PDEevol}  in a weak sense. 
  	\end{proofth}

  	\bigskip 
  	
  	\begin{proofth}{Proof of Theorem \ref{texistlinear}}  Clearly, the proof of  Theorem \ref{texistlinear} follows by Lemma \ref{lcondsol1}. Indeed, since in this case $\phi ^\tau=k(x) \: \nabla \eta^\tau$ and $\nabla \eta^\tau \to \nabla \eta$ in $L^{2}(Q)\hbox{-weak},$ then \eqref{condphi} is obviously fulfilled.  
  		 	\end{proofth}

  	\bigskip 
  	The main difficulty in proving Theorems \ref{texistVnull} and \ref{texistgen} lies in the  justification of $\phi \in \partial F(x,\nabla \eta).$ To address this, we employ a Minty-type argument and establish a sufficient condition that encompasses weak/strong convergences of $\rho^\tau,$ $\nabla \eta^\tau$ and $V^\tau.$

  	\begin{lemma} \label{lcondsol2} 
  	Under the assumptions of Lemma \ref{lcondsol1}, if 	\begin{equation}\label{condsuff1}
  	 \int_0^T\!\! \int_\Omega  	\rho\: V\cdot \nabla \eta \: dtdx \leq 	\limsup_{\tau \to 0}\int_0^T\!\! \int_\Omega  	\rho^\tau(t-\tau)\: V^\tau\cdot \nabla \eta^\tau\: dtdx, 
  		\end{equation}
then    \eqref{condphi} is fulfilled.  
  		\end{lemma}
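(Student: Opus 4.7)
The plan is a Minty--Browder argument based on the Fenchel--Young equality. Since $\phi^\tau \in \partial_\xi F^*(x, \nabla \eta^\tau)$ a.e.\ in $Q$, the Young equality gives $\phi^\tau \cdot \nabla \eta^\tau = F^*(x, \nabla \eta^\tau) + F(x, \phi^\tau)$ pointwise. Testing the discrete PDE \eqref{PDEi} with $\xi = \eta^i - \tilde g \in W^{1,p}_{\Gamma_D}(\Omega)$ (admissible since $\eta^i = g$ on $\Gamma_D$), summing over $i$, and substituting this Young identity in the resulting expression for $\int \phi^\tau \cdot \nabla(\eta^\tau - \tilde g)$ produces the discrete energy balance
\begin{equation}\label{pplan:balance}
\int_0^T\!\!\int_\Omega \bigl[F^*(x, \nabla \eta^\tau) + F(x, \phi^\tau)\bigr] + \int_0^T\!\!\int_\Omega \rho^\tau(t-\tau) V^\tau \cdot \nabla \eta^\tau = \mathrm{RHS}_\tau,
\end{equation}
where $\mathrm{RHS}_\tau$ collects $\int \phi^\tau \cdot \nabla \tilde g + \int \rho^\tau(t-\tau) V^\tau \cdot \nabla \tilde g + \int f^\tau_0 (\eta^\tau - \tilde g) + \int_0^T \langle \pi, \eta^\tau - \tilde g\rangle_{\Gamma_N} - \int \overline f^\tau \cdot \nabla(\eta^\tau - \tilde g)$ together with the discrete time-derivative term $-\sum_{i=1}^n \int_\Omega (\rho^i - \rho^{i-1})(\eta^i - \tilde g)$. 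The $V$-term is placed on the left of \eqref{pplan:balance} precisely so that the direction of the hypothesis \eqref{condsuff1} becomes usable.

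Taking $\limsup_\tau$ on both sides of \eqref{pplan:balance}: on the left, the inequality $\limsup(A+C) \geq \liminf A + \limsup C$ combined with the convex l.s.c.\ character of $\int F^*(x,\cdot)$ and $\int F(x,\cdot)$ under the weak convergences \eqref{conveta}, \eqref{convphi}, and with \eqref{condsuff1}, yields the lower bound $\int [F^*(x, \nabla \eta) + F(x, \phi)] + \int \rho V \cdot \nabla \eta$. On the right, all terms except the discrete time-derivative pass to their natural limits via weak/strong pairing (weak convergence of $\phi^\tau, \rho^\tau, \nabla \eta^\tau$ together with strong $L^p$ convergence of the time-averages $V^\tau, f^\tau_0, \overline f^\tau$). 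For the remaining term, the convexity of $\beta$ combined with $\eta^i \in \partial_r \beta(x, \rho^i)$ yields $(\rho^i - \rho^{i-1})\eta^i \geq \beta(x, \rho^i) - \beta(x, \rho^{i-1})$; summing, telescoping, and invoking the weak convergence $\rho^\tau(T) \rightharpoonup \rho(T)$ in $L^{p'}(\Omega)$ (consequence of the $L^{p'}(0,T; W^{-1,p'}_{\Gamma_D})$ bound on $\partial_t \tilde \rho^\tau$) and the l.s.c.\ of $\int \beta(x,\cdot)$ furnish
\begin{equation}
\liminf_\tau \sum_{i=1}^n \int_\Omega (\rho^i - \rho^{i-1})(\eta^i - \tilde g) \geq \int_\Omega [\beta(x, \rho(T)) - \beta(x, \rho_0)] - \int_\Omega (\rho(T) - \rho_0) \tilde g.
\end{equation}

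The final step identifies the resulting upper bound on $\limsup_\tau \mathrm{RHS}_\tau$ with $\int \phi \cdot \nabla \eta + \int \rho V \cdot \nabla \eta$. Testing the limit equation \eqref{weaksol} against $\eta(t,\cdot) - \tilde g$ and invoking the classical Bamberger--Brezis--Mignot chain rule
\begin{equation}\label{pplan:chain}
\int_0^T \langle \partial_t \rho, \eta - \tilde g \rangle_\Omega \, dt = \int_\Omega [\beta(x, \rho(T)) - \beta(x, \rho_0)] - \int_\Omega (\rho(T) - \rho_0) \tilde g,
\end{equation}
valid since $\eta \in \partial_r \beta(x, \rho)$ a.e., provides exactly this identification. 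Cancelling $\int \rho V \cdot \nabla \eta$ from both sides of the resulting inequality delivers $\int [F^*(x, \nabla \eta) + F(x, \phi)] \leq \int \phi \cdot \nabla \eta$, which combined with the pointwise Young inequality $\phi \cdot \nabla \eta \leq F^*(x, \nabla \eta) + F(x, \phi)$ forces a.e.\ equality, i.e.\ \eqref{condphi}.

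The main obstacle is the chain rule \eqref{pplan:chain}: the duality pairing on the left is well defined from $\rho \in W^{1,p'}(0,T; W^{-1,p'}_{\Gamma_D}(\Omega))$ and $\eta - \tilde g \in L^p(0,T; W^{1,p}_{\Gamma_D}(\Omega))$, but identifying it with the energy difference on the right requires a careful approximation argument -- typically Steklov averaging in time together with a Yosida regularization of the maximal monotone graph $\partial_r \beta(x, \cdot)$ -- to accommodate the non-homogeneous Dirichlet datum $\tilde g$ and to give meaning to $\rho$ at $t = 0$ and $t = T$.
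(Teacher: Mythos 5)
Your proposal is correct and follows essentially the same route as the paper: the discrete energy balance obtained by testing \eqref{PDEi} with $\eta^i-\tilde g$ together with the convexity inequality $(\rho^i-\rho^{i-1})\eta^i\geq \beta(.,\rho^i)-\beta(.,\rho^{i-1})$ (the paper's Lemma \ref{Lrhotproperty}), weak lower semicontinuity in the limit $\tau\to 0$, the hypothesis \eqref{condsuff1} to control the transport term, and the chain rule of Lemma \ref{ltech2} applied to the limit equation to identify $\int\!\!\int_Q \phi\cdot\nabla\eta$. The only (immaterial) difference is the final step: the paper concludes via the classical Minty monotonicity trick from $\liminf_\tau\int\!\!\int_Q\phi^\tau\cdot\nabla\eta^\tau\leq\int\!\!\int_Q\phi\cdot\nabla\eta$, whereas you substitute the Fenchel--Young equality upfront and conclude from $\int\!\!\int_Q[F^*(.,\nabla\eta)+F(.,\phi)]\leq\int\!\!\int_Q\phi\cdot\nabla\eta$ plus the pointwise Young inequality.
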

  	\begin{proof} Having in mind $\phi ^\tau \in \partial F^*(.,\nabla \eta^\tau)$ and the weak convergence of $\phi ^\tau$ and $\nabla \eta^\tau$ in $L^{p'(Q)^N,}$ we use standard monotonicity arguments of Minty's type to prove this lemma. Indeed, it is enough to prove that $	\liminf_{\tau \to 0}	\int\!\!\!\int_Q \phi ^\tau \cdot \nabla (\eta^k-\eta)\leq 0.$ This is equivalent to prove that 
  		 	\begin{equation}\label{Minarg}
  				\liminf_{\tau \to 0}	\int\!\!\!\int_Q \phi ^\tau \cdot \nabla \eta^k \: dtdx\leq \int\!\!\!\int_Q \phi   \cdot \nabla  \eta\: dtdx. 
  			\end{equation}  
  	First, one sees that letting $\tau\to 0,$ in \eqref{rhotproperty}, and using the l.s.c. of the convex function $\beta$ with respect to the $L^{p'}(Q)$-weak topology, we have 
  		\begin{equation} 
  			\begin{array}{c}
  				\liminf_{\tau \to 0}\int_0^T\!\! \int_\Omega  \phi ^{\tau}  \cdot \nabla\eta^{\tau} \leq   \int_0^T\!\! \int_\Omega  \phi  \cdot \nabla\tilde g -\int_\Omega   \beta(.,\rho(t)) +	\int_\Omega  \rho(t) \: \tilde g   + 	\int_\Omega   \beta(.,\rho_0)  - 	\int_\Omega  \rho_0 \: \tilde g     \\  \\ + \int_0^T\langle  f   ,\eta -\tilde g\rangle_{\Omega} 
  				+\int_0^T\!\!   \langle \pi,\eta \rangle_{\Gamma_N}  	-\limsup_{\tau \to 0}  \int_0^T\!\!   \int_\Omega  \rho^{\tau}(t-\tau) V^\tau   \cdot \nabla(\eta^{\tau}-\tilde g)      .  
  			\end{array}
  		\end{equation}	   
Assuming  \eqref{condsuff1}, we obtain in one hand 
   		\begin{equation} \label{eq5}
  		\begin{array}{c}
  			\liminf_{\tau \to 0}\int_0^T\!\! \int_\Omega  \phi ^{\tau}  \cdot \nabla\eta^{\tau} \leq   \int_0^T\!\! \int_\Omega  \phi  \cdot \nabla\tilde g -\int_\Omega   \beta(.,\rho(t)) +	\int_\Omega  \rho(t) \: \tilde g   + 	\int_\Omega   \beta(.,\rho_0)  - 	\int_\Omega  \rho_0 \: \tilde g     \\  \\ + \int_0^T\langle   f ,\eta -\tilde g\rangle_{\Omega}  
  			+\int_0^T\!\!   \langle \pi,\eta \rangle_{\Gamma_N}  	- \int_0^T\!\!   \int_\Omega  \rho V   \cdot \nabla(\eta -\tilde g)     .  
  		\end{array}
  	\end{equation}	   
On the other hand, since  $ \partial_t 	 \rho  =   \nabla\cdot  ( \phi   +\rho\: V+\overline f) + f_0     $ in $Q$ and $	(\phi +\rho\: V+\overline f)   \cdot \nu  =  \pi $ on $\Sigma_{N  },$ by using  Lemma \ref{ltech2}, we have 
  		\begin{equation} 
  		\begin{array}{c}
  		 \int_0^T\!\! \int_\Omega  \phi   \cdot \nabla\eta =    \int_0^T\!\! \int_\Omega  \phi  \cdot \nabla\tilde g -\int_\Omega   \beta(.,\rho(t)) +	\int_\Omega  \rho(t) \: \tilde g   + 	\int_\Omega   \beta(.,\rho_0)  - 	\int_\Omega  \rho_0 \: \tilde g     \\  \\ + \int_0^T\langle   f   ,\eta -\tilde g\rangle_{\Omega}  
  			+\int_0^T\!\!   \langle \pi,\eta \rangle_{\Gamma_N}  -\int_0^T\!\!   \int_\Omega  \rho V   \cdot \nabla(\eta -\tilde g)     .  
  		\end{array}
  	\end{equation}	   
  	 Combining this with \eqref{eq5}, we deduce \eqref{Minarg}.  Thus the result of the lemma. 
 \end{proof}

  	\bigskip 
  	  	\begin{proofth}{Proof of Theorem \ref{texistVnull}}  Since we are assuming here that $V\equiv 0,$  \eqref{condsuff1} is obviously true and the proof follows directly from Lemma \ref{lcondsol2} .  
  	   		\end{proofth}

  	\bigskip
  	\begin{proofth}{Proof of Theorem \ref{texistgen}}   Using  Lemma \ref{convrhostrong}, we know that under the assumption $(H5),$  $\rho^\tau$ converges in $L^{p'}(Q).$   This implies that \eqref{condsuff1} is fulfilled.   Indeed, it is enough  to rewrite  
  		$$  \int_0^T\!\! \int_\Omega  	\rho^\tau(t-\tau)\: V^\tau\cdot \nabla \eta^\tau =	   \underbrace{ \int_0^T\!\!   \int_\Omega  (\rho^{\tau}(t-\tau) - \rho^{\tau}(t)) V^\tau   \cdot \nabla  \eta^{\tau}   }_{J^1_\tau} +  \underbrace{\int_0^T\!\!   \int_\Omega  \rho^{\tau}(t) V^\tau   \cdot \nabla \eta^{\tau} }_{J^2_\tau(t)} .$$
  		Then using  the fact that $V^\tau$ and $\nabla \eta^\tau$ are bounded in $L^\infty(Q)$ and $L^{p}(Q),$ respectively, we have 	      $J^1_\tau\to 0$,   and 
  		$$	\lim_{\tau\to 0} J _\tau =  \int_0^T\!\!   \int_\Omega  \rho V   \cdot \nabla \eta  .$$

  		\end{proofth}

  \section{Comments, remarks and extensions}\label{SRemarks}
  	  \setcounter{equation}{0}
    
 \subsection{$W^{-1,p'}_{\Gamma_D}(\Omega)$-Gradient flow and $p'-$curves of maximal slope} \label{RemH1}
Despite the fact that \eqref{PDEi} coincides with Euler-Implicit discretization in $L^1(\Omega),$   the primary differential operator driving the dynamic no longer adheres to the definition of a conventional subgradient operator and cannot be linked to any $L^q-$minimizing energy process (despite   quadratic $\beta$). However, we show here formally that, under specific conditions, this operator may be interpreted as a metric subgradient operator within the dual Sobolev space $W^{-1,p'}_{\Gamma_D}(\Omega)$ equipped with its natural distance.

   Assume   that 
  $$F(x,A)=\frac{1}{p'}\vert A\vert^{p'},\quad \hbox{ for any }A\in \RR^N,$$
  with $1<p'<\infty,$ 
and moreover    we assume that $g\equiv 0$  and $ \pi\equiv 0 .$  It is   clear in this case that the application     
    	$$f:= f_0+\nabla \cdot \overline f\in W^{-1,p'}_{\Gamma_D}(\Omega) \rightarrow \I_{0,0}^{\overline f}( f_0)^{1/p'}, $$
    	coincides with the usual  norm in $W^{-1,p'}_{\Gamma_D}(\Omega).$   Yet, one sees that in general the application $\I_{\pi,g}^\chi$ is not connected to a norm, unless one assumes some additional  assumptions  on $F$.  Thanks to Theorem \ref{tduality1}, for any $f\in W^{-1,p'}_{\Gamma_D}(\Omega),$ there exists $\phi_f\in   L^{p'}(\Omega)^N,$ given by 
    		$$\phi_f=\argmin \left\{\frac{1}{p'}\int_\Omega  \vert \phi\vert^{p'} \: :\:  \phi\in L^{p'}(\Omega)^N,\: -\nabla \cdot \phi=f_0+\nabla \cdot \overline f \hbox{ in }\Omega \hbox{ and }(\phi+\overline f)\cdot \nu  =0 \hbox{ on }\Gamma_N  \right\}  , $$ 
    	such that 
    	$$	\Vert f\Vert_{W^{-1,p'}_{\Gamma_D}(\Omega) } ^{p'}=  \frac{1}{p'}\int_\Omega  \vert \phi_f\vert^{p'}\: dx ,$$  
and 
$$\phi_f=\vert \nabla u\vert^{p-2}\nabla u \quad \hbox{ a.e. in }\Omega, $$
where   $u$ is the solution of the $p-$Laplacian equation with mixed boundary condition 
\begin{equation}
	\left\{ \begin{array}{ll}
		-\Delta_p u = f_0+ \nabla \cdot \overline f \quad & \hbox{ in }\Omega\\  \\ 
		(\phi+\overline f)\cdot \nu =0 &\hbox{ on }\Gamma_N\\ \\
		u=0 & \hbox{ on }\Gamma_D.	\end{array} \right.
\end{equation}
	In addition, one  can prove that the duality bracket $	\langle . , .\rangle_{\Omega}$ coincides with  
\begin{equation}
	\langle f , z\rangle_{\Omega}=\int_\Omega  \phi_f\cdot \nabla z \: dx  \quad \hbox{ for any } f\in W^{-1,p'}_{\Gamma_D}(\Omega)\hbox{ and }z\in W^{1,p}_{\Gamma_D}(\Omega).
\end{equation}
It is clear that if $p'=2,$ $W^{-1,2}_{\Gamma_D}(\Omega)$ is Hilbert space and  the approach we develop in this paper   may be   connected to the  earlier works   for quadratic $F$, \cite{Dam,DK,Kenmochi},     in the contest of       gradient flow   in $H^{-1}-$Hilbert spaces (one can see also the books \cite{Br,Barbu} for the general theory).         For the case where $p\neq 2,$   the natural duality bracket  $\langle . , .\rangle_{\Omega}$  do not allow to treat   doubly nonlinear PDEs   \eqref{FVnull}  in the contest of gradient flow.  To introduce   $W^{-1,p'}_{\Gamma_D}(\Omega)$-Gradient flow for \eqref{FVnull}, we consider the metric framework  where     $\X:=  W^{-1,p'}_{\Gamma_D}(\Omega)$  is equipped with its natural distance 
     	$$d (f_1,f_2)=   \Vert f_1-f_2\Vert_{W^{-1,p'}_{\Gamma_D}(\Omega) }.$$  
Then, under the assumptions of Section \ref{SAssumptions},  we consider the application  (internal energy)  $\mathcal E\: :\:   \X\to (-\infty,+\infty]$ given by 
$$ \mathcal E (\rho)=\left\{  \begin{array}{ll} 
	\int_\Omega  \beta(.,\rho)\: dx \quad &\hbox{ if }\beta(.,\rho)\in L^1(\Omega)\\ 
	+\infty & \hbox{ otherwise}.\end{array}\right. $$
	Revisiting the   steepest distance algorithm for the PDE \eqref{PDE1} with $f\equiv 0,$     we consider again   the time $\tau-$discretization $t_0 = 0 < t_1 < \cdots < t_n =T,$ with $t_i-t_{i-1}=\tau>0,$ for any $i=1,...n.$  Remember that    the    piecewise constant curves  \eqref{rhotau} is given by 
		\begin{equation} 
		\rho^i = 	 	\hbox{argmin}_{\rho\in   L^{p'}(\Omega)  }\left \{\int_\Omega   \beta(.,\rho) +   \tau \: \I_{0,0}^{0}   \left(\frac{  \rho^{i-1}-\rho}{\tau}\right)     \right\} .\end{equation}   
   Then, it is clear that 
		\begin{equation}\label{coroptim11}
		\rho^i = 	 	\hbox{argmin}_{\rho\in  X}\left \{\mathcal E (\rho) +  \tau^{p'-1}  d(   \rho^i ,\rho)^{p'}   \right\},
	\end{equation}    
	 which corresponds to the $p'-$scheme for the approximation of   $p'-$curves of maximal slope for $\mathcal E$ (cf. \cite{AGS}).  So, letting $\tau\to 0$ in the discrete curves 
 	\begin{equation} 
		\rho^\tau =\sum_{i=1}^n\rho^i\:  \chi_{]t_{i-1}  ,t_{i}]} \quad\hbox{ with } \rho^\tau(0)=\rho_0 ,\end{equation}    
we cover the $p'-$curve  of maximal slope for the   energy $\mathcal E.$    This implies in particular that the PDE \eqref{FVnull}   may be interpreted  as  a ''metric'' gradient flow  of $\mathcal E$ in $(X,d)$ ; i.e. 
 \begin{equation}\label{evolsubdiffW}
	\frac{d\rho}{dt}+ \partial_{\X}\mathcal E (\rho)= 0\quad  \hbox{ in }(0,T),
\end{equation}
in $(\X,d).$ The solution  may be interpreted in turn as the $p'-$curve  of maximal slope in $(\X,d)$ for the   energy $\mathcal E$ (cf. \cite{AGS}).   That is, $\rho$ satisfies the $p'-$energy identity 
\begin{equation}\label{pcurves}
 \frac{d}{dt } \mathcal E(\rho) +  	\frac{1}{p'}  \vert \rho'\vert^p +\frac{1}{p}  \vert \partial\mathcal E \vert^p (\rho)  = 0  \quad \hbox{ in }\D'([0,T]), \end{equation} 
where  $ \vert \partial  \mathcal E\vert  $ denotes the slopes of $\mathcal E$ and  $\vert \rho'\vert$ the metric derivative of $\rho.$ 
 In particular, Theorem \ref{texistVnull} gives an  interpretation in terms of PDE of the  $p'-$curve  of maximal slope given by 
 \eqref{pcurves}, and vice versa.  Moreover, one can associate the stationary PDE 
 	\begin{equation} 
 	\left\{\begin{array}{ll} 
 	  - \Delta_p \eta =  f    ,\quad       	\rho  \in \partial \beta^*(x,\eta  )    	\quad & \hbox{ in } \Omega, \\  \\
 		\vert \nabla \eta\vert^{p-2}\nabla \eta    \cdot \nu  =  0    & \hbox{ on }\Gamma_{N  }\\  \\
 		\eta   =  0    \quad    & \hbox{ on }\Gamma_{D  }  
 	\end{array}
 	\right. 	 
 \end{equation} 
 to the metric gradient flow of the internal energy $\int_\Omega   \beta(.,\rho) $ in $W^{-1,p'}_{\Gamma_D}(\Omega).$  
It is worth noting that, to the best of our knowledge, this type of results  is well-established for quadratic $F$ with $p=2$ and homogeneous boundary,    but has not been addressed for other cases. We believe that the approach developed in this paper could shed more light on    the $p'-$curve  of maximal slope solutions for such cases.   We do not go into depth on this direction in order to avoid making the paper much more  longer.

  	 \subsection{Prediction-correction algorithm}\label{SpredictionCor} 
  	 As we said in the introduction the     dynamic in  \eqref{PDEevol}   results  from the superposition of two processes  : a transport process  and a nonlinear energy minimization process.  The transport  process represents the external forces acting on the system, while the nonlinear energy minimization process represents the system's internal dynamics.   Remember that the Wasserstein steepest descent framework enables to handle the transport process within the internal energy and keeps the transition work out of   it .   For instance,  in the case where   	 where $V=\nabla \varphi,$   the solution of  diffusion-transport porous medium equation 
  	 \begin{equation}\label{PME2} 
  	 	\partial_t \rho -\nabla \cdot ( \rho^m -\rho\: V)=0, 
  	 \end{equation} 
  	  can  be obtained through  $W_2-$Wasserstein steepest descent framework, by working with $W_\tau (\rho,\tilde \rho)= \frac{1}{2\tau} \mathcal W_2( \rho,\tilde \rho)^2,$  and  employing the internal energy 
  	 $$ \mathcal E(\rho)=\frac{1}{m-1}\int_\Omega  \rho^{m} \: dx + \int_\Omega  \rho \: \varphi \: dx.$$
   In our approach, the transport process is incorporated within the traffic cost entity $\I$. Indeed, we retain the same internal energy $  \mathcal E(\rho)=\frac{1}{m+1}\int_\Omega  \rho^{m+1} \: dx$   as in the case without a transport $V,$  but the transition cost is modified to a $\chi-$forced one $\I^\chi,$ where $\chi$ involves $\rho\: V.$   Remember that $\I$ involves also boundary conditions $g$ and $\pi$ which can be considered also as boundary forced terms processes. 

%
  
  \medskip 	 
An alternative  way to address the diffusion-transport dynamics in Equation \eqref{PDEevol} which may be used successfully   is connected to the so-called prediction-correction algorithm (see some variants in \cite{MRS1,EIJ} in the contest of crowed motion and the references therein).   The idea  consists initially separates the dynamic into its primary processes: transport and diffusion.  In the first step, the transport process is applied to the system through  the transport equation $\partial_t \rho+\nabla \cdot (\rho\: V)=0.$ And in the second step,  one can address the diffusion process.  For this second step, we can use the proximal minimizing internal energy process  as given in Section \eqref{SProximal}  within a  transition cost  $\I$ 
 kick without  the forcing term $V.$

  More precisely, let us see how the algorithm turns   on formally for the evolution dynamic  \eqref{PDEevol}.  Again, we consider $T>0$ a given time horizon, and for a given time step $\tau >0,$ we consider a uniform partition   of  $[0,T]$ given by
  $t_k=k\tau,$  $k=0,\dotsc,n-1.$ Supposing that we know the density of the population $\rho_k$ at a given  step $k,$  starting by $\rho_0.$ Then, we superimpose successively the following two steps :
  \begin{itemize}
  	\item[]   \underline{Prediction:}
  	In the predictive time step $[t_k,t_{k+\frac{1}{2}}[,$ where $t_{k+\frac{1}{2}} =k\tau +\tau/2,$  the density of population trends to move  into
  	$$ \rho^{k+\frac{1}{2}} =\rho(t_{k+\frac{1}{2}}) ,$$
  	where $\rho$ is the solution of the transport equation 
  	\begin{equation}
  		\label{eq:continuity} 
  		\partial_t \rho+\nabla \cdot (\rho\: V )=0\quad \hbox{ in }[t_k,t_{k+\frac{1}{2}}),
  	\end{equation}
  	with  $ \rho(t_k)=\rho^k.$    
  	
  	\item[]\underline{Correction:} In general it is not expected that $\rho_{k+\frac{1}{2}}$ to be an allowable since the forcing term may increase the internal energy.  Then,   one can proceed minimizing instantaneously the energy  against the transition work $\I_{\pi,g}^\chi,$ with $\chi\equiv \tau \overline f^k$ (keeping $\chi$ out of $V$).     More precisely, one considers $\rho_{k+1}$ given by the following optimization problem
   \begin{equation}\label{argmincor}
  	\rho_{k+1} = 	 	\hbox{argmin}_{\rho}\left \{\int_\Omega   \beta(.,\rho) +  \tau\: \int_\Omega F(.,\phi/\tau ) \: dx - \tau\:   \langle \phi  \cdot  \nu, g\rangle_{\Gamma_D} \right. $$ $$\left.  \: :\:   \phi  \in  \A_{\pi}^{\tau \overline f^k}(\tau f_0^k +\rho^{k+1/2}-\rho],\:  \rho\in   L^{p'}(\Omega)  \right\}.
  \end{equation}   
    \end{itemize}	 
  Thanks to Theorem \ref{tduality2}, one sees that $	\rho_{k+1}$ is the solution of the PDE 
  	\begin{equation}
  	\label{PDEannexe}
  	\left\{\begin{array}{ll} 
  		\left.  \begin{array}{ll} 
  			\rho^{k+1}  -\tau\: \nabla\cdot ( \phi ^{k+1} +\overline f^k ) = \tau f^k_0 + \rho^{k+1/2} ,\quad     \\ 
  			\\  				\rho^{k+1} \in \partial \beta^*(x, \eta^{k+1} ),\quad \phi ^{k+1}   \in  \partial_\xi {F}^*(., \nabla \eta^{k+1})   \\   \\     	 
  		\end{array}\right\} 	\quad & \hbox{ in } \Omega, \\  \\
  		(\phi ^{k+1} +\overline f^k ) \cdot \nu  =  \pi    & \hbox{ on }\Gamma_{N  }\\  \\
  		\eta^{k+1}  =  g   ,\quad    & \hbox{ on }\Gamma_{D  }.
  	\end{array} \right.
  \end{equation} 
  
  \medskip 
  As in the algorithm of Section \eqref{SSteepest}, the main future is to prove that the  approximation 
    \begin{equation}\label{rhotaucor}
  	\rho^\tau =\sum_{k=0}^{N-1}\rho^{k+1/2}\:  \chi_{]t_{k}  ,t_{k+1/2}]} +  \rho^{k+1}\:  \chi_{]t_{k+1/2}  ,t_{k+1}]}  \quad\hbox{ with } \rho^\tau(0)=\rho_0,\end{equation}    
  converges to the solution of  \eqref{PDEevol}.  This type of algorithm is used in \cite{EIJ} for the study of crowd motion. 
  Other version based on $W_2-$Wasserstein steepest decsent algorithm may be found in \cite{MRS1,MRS2,MRSV,MS}.

     Both the algorithm presented in Section \ref{SSteepest} and the prediction-correction algorithm differ slightly in their approach. Consider, for instance, a system of particles in a fluid, where the particles are driven by the fluid's velocity and simultaneously attempt to minimize their energy by transitioning to a lower energy state. While the first algorithm focuses on minimizing the energy state by incorporating the force term into the transition cost, the second algorithm aims to first propel the system with the fluid's velocity and then guide it towards a lower energy state using a diffusive transition cost. In essence, the second algorithm employs a prediction-correction strategy, whereas the first algorithm can be considered a "transport forcing transition" type.
 
At last, it is worth mentioning that we employ a "transport forcing transition" type algorithm in this paper to address Equation \eqref{PDEevol} due to its ability to simultaneously recover compactness and effectively enforce Neumann boundary conditions consistent with those of \eqref{PDEevol} without the need for additional constraints on V at the boundary. Otherwise, careful consideration must be given to the Neumann boundary condition when utilizing the prediction-correction algorithm described above. 

\subsection{Uniqueness} 	The uniqueness of the solution for the PDE \eqref{PDEevol} is a very interesting question that can quickly become very complicated depending on the assumptions on $\beta,$ $F$ and $V.$ In the case where $V\equiv 0,$ one might expect the uniqueness   either through convexity arguments for the optimization problem or by using PDE techniques to prove   $L^1-$contraction principle for the solutions. However, this is no longer true for more general  $V$ and the question of uniqueness becomes challenging  even for some concrete choice of $F$ or $\beta$ (see for instance Remark 2 in \cite{IgShaw}) with respect to the assumptions on $V$ as well as the choice of boundary conditions.     To the best of our knowledge, except   particular cases involving linear type diffusion (cf. \cite{IgShaw,KM,David,MS,Car,IgPME} for linear diffusion and \cite{IgUr2} for the $p-$Laplacian operator) , the question of uniqueness of weak solutions  with nonlinear diffusion and   general non linearity  is not  addressed   in the literature. We will not treat  these questions in this paper.

\subsection{Linear growth  cost  $F$ : granular diffusion}	The case where  $F$ satisfies  $(H1)$ with $p'=1 $  and  $p=\infty,$ appears  when one deals with granular diffusion phenomena like in sandpile or crowd motion (cf. \cite{EIJ}). In this case, one needs  to use vector valued fields   whose divergence  are Radon measures (cf.  \cite{Chen}).   One can see the papers \cite{EIN1,EIN2}  and \cite{IN1,IN2}  for the study of some particular cases connected to $I_{\pi,g}^\chi.$  The steepest descent algorithm of the type $\N$ has been used in \cite{EIJ}     with linear growth  $F$ and Hele-Shaw type nonlinearity to model  congestion phenomena in crowed motion.  Formally, the evolution problem of the type \eqref{PDEevol} associated with linear growth  $F,$ may be  given by 
 \begin{equation} 
	\left\{\begin{array}{ll} 
		\left.  \begin{array}{ll} 
			\partial_t 	\rho  - \nabla\cdot ( \phi   +\rho  V )=  f    ,\quad        \\ 
			\\  		\rho \in \partial \beta^*(x,\eta  ),\:   \phi    \in   \partial I\!\! I_{B_F} (\nabla \eta )    \\   \\   
		\end{array}\right\} 	\quad & \hbox{ in } Q, \\  
		(\phi   +\rho  V )   \cdot \nu  =  \pi    & \hbox{ on }\Sigma_{N  }\\  \\
		\eta   =  g    \quad    & \hbox{ on }\Sigma_{D  } \\  \\
		\rho(0)=\rho_0& \hbox{ in }\Omega ,
	\end{array}
	\right. 	 
\end{equation} 
 where 
 $$B_{F(x,.)}:=\left\{ A\in \RR^N\: :\:   F^*(x,A)\leq 1\right\} , \quad \hbox{ for a.e. }x\in \Omega.$$
For instance, if $F(x,A)=k(x)\: \vert A\vert,$ for any $x\in \Omega$ and $A\in \RR^N,$  $B_{F(x,.)}$ coincides with the   closed bull $\overline{B(0_{\RR^N},k(x))}.$ Yet, to be well posed one needs to assume some necessary conditions  on $g$ and $\pi.$     Moreover, the involved flux $\phi $  is no more  a Lebesgue  integrable   function  (see   \cite{Dweik} and the references therein for  Lebesgue regularity issues of such flux $\phi $). It is a Radon measure and one needs to use more involved techniques to treat this case.  We will back on this case on forthcoming works.


\subsection{Linear growth  cost  $F^*$ : total variation flow }	    
 A common scenario that we do not address in this paper corresponds to $F$ such that $F^*$ has linear growth function (cf. \cite{GM}).  This case arises in instances like total variation flow (cf. \cite{ACMazon}) and doubly nonlinear total variation flow (cf. \cite{MMT}), where $p=1.$ In this case, the functional framework outlined in Section \ref{SProximal} is no longer applicable. Consequently, a thorough analysis of this scenario necessitates a careful examination in the appropriate functional spaces.

\subsection{Numerical computation}	
Gradient descent algorithms are a powerful class of iterative optimization methods that leverage the gradient of a function to guide the search for its optimal solution. While they are typically employed for nonlinear optimization problems, as we demonstrate in Section \ref{SSteepest}, they can also effectively approximate the solutions of  PDEs. This algorithmic approach has gained prominence in the context of Wasserstein optimization, particularly through the JKO algorithm, a cornerstone of numerical approximation in this framework. For a comprehensive understanding of these methods and their numerical considerations in this contest, we refer the reader to the works \cite{BCL,BCCNP} and Chapter 6 of the book \cite{Stbook}.

To our knowledge, the recent contribution \cite{EIJ} pioneers the use of steepest descent through minimum flow to numerically tackle evolution dynamics of the form \eqref{PDEevol}. The authors highlight the algorithm's adaptability in    handling both quadratic and linear growth cost $F,$ incorporating Hele-Shaw type nonlinearities to model congestion phenomena in crowd motion. Moreover, the algorithm's flexibility enables the incorporation of diverse practical scenarios as well as mixed and non-homogeneous boundary conditions, paving the way for more realistic simulations.

So,  basically the steepest descent through minimum flow algorithm introduced in this paper presents a novel approach for numerically addressing a broad spectrum of composite PDEs. By utilizing sophisticated numerical optimization techniques known for their effectiveness, even in the presence of low-regularity nonlinearities, this algorithm opens up new methods for PDE analysis and simulation. Specifically, for numerical approximation of the solution of \eqref{PDEevol}, Euler-Implicit discretization proves to be a valuable tool. This leads to the numerical solution of the PDE \eqref{PDEi}. By connecting this PDE to the steepest descent formulation of the type $\N $ and exploiting the duality between these problems, we gain access to numerical convex optimization techniques, particularly primal-dual methods such as the alternating direction method of multipliers (ADMM), the augmented Lagrangian method (cf. \cite{Glowinski}), and the Chambolle-Pock algorithm (cf. \cite{ChP}). These methods offer a robust framework for handling the challenges of PDE approximation, particularly when dealing with complex nonlinearities and non-homogeneous boundary conditions.

 \appendix

  	  	\section{Appendix : technical tools}\label{SAppendix} 
  	 \setcounter{equation}{0}

  	\subsection{On $W^{-1,p'}_{\Gamma_D}(\Omega)$ characterization} \label{AppendixW}
  	The aim of the following Lemma is to give a practical  characterization  of the dual bracket $\langle.,.\rangle_{W^{-1,p'}_{\Gamma_D}(\Omega),W^{1,p}_{\Gamma_D}(\Omega)} ,$ that we denote by   $\langle.,.\rangle_{ \Omega} ,$ in terms of PDE.  To this aim,  we prove a decomposition of elements of  $W^{-1,p'}_{\Gamma_D}(\Omega)$ through a divergence formulation  that includes a trace of these elements on $\Gamma_N.$ More involved proofs for this type of results that remain valid for a more general class of $\Omega$ can be found in  \cite{LionsMagenes} for $p=2$ and  \cite{Cherif2} for general $1<p<\infty.$ 
  	
  	\begin{aplemma} \label{LdualW}
  		Under the assumptions of Section  \ref{SAssumptions}, $f\in  W^{-1,p'}_{\Gamma_D}(\Omega)$ if and only of one the following equivalent formulation is fulfilled
  		\begin{itemize}
  			\item[(A1)] there exists $f_0\in L^{p'}(\Omega),$ $\overline f\in L^{p'}(\Omega)^N,$ such that 
  			\begin{equation} \label{W1}
  				\langle f,\xi\rangle_{\Omega}  = \int_\Omega f_0\:\xi\: dx -   \int_\Omega \overline f\cdot \nabla \xi  \: dx ,\quad \forall\xi\in 
  				W^{1,p}_{\Gamma_D}(\Omega). 
  			\end{equation}
  			
  			\item[(A2)] there exists $f_0\in L^{p'}(\Omega),$ $\tilde  f\in L^{p'}(\Omega)^N$ and $f_{\Gamma_N} \in W^{-\frac{1}{p'},p'}(\Gamma_N),$  such that 
  			\begin{equation} \label{W2}
  				\langle f,\xi\rangle_{\Omega}  = \int_\Omega f_0\:\xi\: dx -   \int_\Omega \tilde  f\cdot \nabla \xi  \: dx  + \langle f_{\Gamma_N},\xi\rangle_{\Gamma_N} ,\quad \forall \xi\in 
  				W^{1,p}_{\Gamma_D}(\Omega). 
  			\end{equation}
  		\end{itemize}  
  	\end{aplemma}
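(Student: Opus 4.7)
The plan is to establish the chain of implications $f\in W^{-1,p'}_{\Gamma_D}(\Omega) \Rightarrow (A1) \Rightarrow (A2) \Rightarrow f\in W^{-1,p'}_{\Gamma_D}(\Omega)$, which yields all the equivalences. Two of these links are essentially cosmetic: $(A1)\Rightarrow (A2)$ is immediate by taking $\tilde f = \overline f$ and $f_{\Gamma_N}\equiv 0$, while $(A2)\Rightarrow f\in W^{-1,p'}_{\Gamma_D}(\Omega)$ follows from Hölder's inequality for the two bulk terms together with the continuity of the trace operator from $W^{1,p}(\Omega)$ into $W^{1-1/p,p}(\partial\Omega)$; the key point here is that for $\xi\in W^{1,p}_{\Gamma_D}(\Omega)$ the trace $\gamma(\xi)_{/\Gamma_N}$ belongs to $W^{1-1/p,p}_{00}(\Gamma_N)$, so the boundary pairing is controlled by $C\|f_{\Gamma_N}\|_{W^{-1/p',p'}(\Gamma_N)}\|\xi\|_{W^{1,p}(\Omega)}$.

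The substantive step is the implication $f\in W^{-1,p'}_{\Gamma_D}(\Omega)\Rightarrow (A1)$, which I would prove by a standard Hahn-Banach/Riesz argument. Consider the linear map
\[T\: :\:  W^{1,p}_{\Gamma_D}(\Omega) \To L^p(\Omega)\times L^p(\Omega)^N,\qquad \xi\mapsto(\xi,\nabla\xi),\]
which is an isometry (for the graph norm) onto its closed image $Y$. The functional $f\circ T^{-1}$ is bounded on $Y$ and extends by Hahn-Banach, preserving its norm, to the whole of $L^p(\Omega)\times L^p(\Omega)^N$. The Riesz representation theorem for the dual of a product of $L^p$ spaces then produces $(f_0,-\overline f)\in L^{p'}(\Omega)\times L^{p'}(\Omega)^N$ such that
\[\langle f,\xi\rangle_\Omega = \int_\Omega f_0\,\xi\: dx - \int_\Omega \overline f\cdot\nabla \xi\: dx, \quad \forall\,\xi\in W^{1,p}_{\Gamma_D}(\Omega),\]
which is exactly \eqref{W1}.

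To make the compatibility between (A1) and (A2) transparent and to justify that the refined form (A2) carries no extra content beyond (A1), I would also note that any genuine boundary distribution $f_{\Gamma_N}\in W^{-1/p',p'}(\Gamma_N)$ can be absorbed into the bulk gradient term. Indeed, by Lemma \ref{Fnonempty} applied with $\mu=0$, $\chi=0$ and $\pi=f_{\Gamma_N}$, there exists $\phi\in H^{p'}(\mathrm{div},\Omega)$ with $-\nabla\cdot\phi=0$ in $\Omega$ and $\phi\cdot\nu=f_{\Gamma_N}$ on $\Gamma_N$. Formula \eqref{tracexpression} then rewrites $\langle f_{\Gamma_N},\xi\rangle_{\Gamma_N}$ as $\int_\Omega\phi\cdot\nabla\xi\: dx$ for every $\xi\in W^{1,p}_{\Gamma_D}(\Omega)$, so that an $(A2)$-representation may always be converted into an $(A1)$-representation by setting $\overline f := \tilde f - \phi$.

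The only genuine obstacle I anticipate is the use of Hahn-Banach, which renders the decomposition $(f_0,\overline f)$ in (A1) non-unique. This ambiguity, however, is intrinsic to the statement: any divergence-free $L^{p'}$ vector field may be added to $\overline f$ without altering $f$, and this is precisely the freedom that allows one to pass back and forth between the (A1) and (A2) representations. Consequently, no canonical choice is claimed, and the lemma is best viewed as a structural characterization of $W^{-1,p'}_{\Gamma_D}(\Omega)$ rather than a uniqueness result.
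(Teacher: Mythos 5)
Your proof is correct, but the substantive step is carried out by a genuinely different mechanism than in the paper. For the implication $f\in W^{-1,p'}_{\Gamma_D}(\Omega)\Rightarrow(A1)$ you embed $W^{1,p}_{\Gamma_D}(\Omega)$ isometrically into $L^p(\Omega)\times L^p(\Omega)^N$ via $\xi\mapsto(\xi,\nabla\xi)$ and invoke Hahn--Banach plus Riesz representation; the paper instead produces the representing field constructively, by solving the variational problem $\min_{z\in W^{1,p}_{\Gamma_D}(\Omega)}\{\frac1p\int_\Omega|\nabla z|^p-\langle f,z\rangle_\Omega\}$ and taking $\overline f=\pm|\nabla z|^{p-2}\nabla z$ (the same device as in Lemma \ref{Fnonempty}). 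Your route is shorter and entirely standard but non-constructive and non-canonical (as you note, the decomposition is only determined up to divergence-free fields); the paper's route yields an explicit representative and keeps the proof in the same variational toolbox used throughout. For (A2) the two arguments also diverge: you obtain it for free from (A1) by taking $f_{\Gamma_N}\equiv 0$, which is logically sufficient for the statement as written, whereas the paper starts from the classical decomposition $f=f_0+\nabla\cdot\tilde f$ in $(W^{1,p}_0(\Omega))^*$ and identifies $f_{\Gamma_N}$ as the normal trace $(\phi+\tilde f)\cdot\nu\in W^{-1/p',p'}(\Gamma_N)$ of a field with $L^{p'}$ divergence --- a more informative conclusion that underpins Remark \ref{LdualW}'s discussion of boundary contributions. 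Your closing observation that any $f_{\Gamma_N}$ can be absorbed into the bulk term by solving the auxiliary problem of Lemma \ref{Fnonempty} with $\mu=0$, $\chi=0$, $\pi=f_{\Gamma_N}$ is correct and in fact recovers the converse direction of the paper's (A2) analysis; together with your verification that traces of $W^{1,p}_{\Gamma_D}(\Omega)$ functions land in $W^{1-1/p,p}_{00}(\Gamma_N)$, the argument is complete.
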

  	\begin{proof} It is clear that for a given $f_0\in L^{p'}(\Omega)$ and $\overline f\in L^{p'}(\Omega)^N,$ if $f$ satisfies   \eqref{W1}  then  $f\in  W^{-1,p'}_{\Gamma_D}(\Omega).$ Similarly, for a given $f_0\in L^{p'}(\Omega),$ $\tilde  f\in L^{p'}(\Omega)^N$ and $f_{\Gamma_N} \in W^{-1/p',p'}(\Gamma_N),$ if $f$ satisfies   \eqref{W2}  then  $f\in  W^{-1,p'}_{\Gamma_D}(\Omega).$   
  		To prove that (A1)  is a necessary condition,  for a given $f\in W^{-1,p'}_{\Gamma_D}(\Omega),$    we consider the 
  		the minimization problem 
  		\begin{equation}\label{plaplace2}
  			\min_{z\in W^{1,p}_{\Gamma_D}(\Omega)} \left\{ \frac{1}{p}\int _\Omega \vert \nabla z\vert^p  \: dx  - 	\langle f,z\rangle_{\Omega}  \right\}  \end{equation}
  		Working as  in Lemma \ref{Fnonempty}, 
  		we can prove that    this problem has  
  		a solution $z $ which   satisfies 
  		\begin{equation}\label{divphif}
  			\int _\Omega  \vert \nabla z\vert^{p-2}\nabla z \cdot \nabla \xi = \langle f,\xi\rangle_{\Omega} ,\quad \forall\: \xi\in  W^{1,p}_{\Gamma_D}(\Omega).  
  		\end{equation}
  		Then,  we deduce that  (A1) is fulfilled  by taking simply $f_0=f$ and $\overline f=0$ whenever $f\in L^{p'}(\Omega),$  otherwise   $f_0=0$ and $\overline f= \vert \nabla z\vert^{p-2}\nabla z.$

  		Now let us prove that (A2) is also a necessary condition.  Since $f\in W^{-1,p'}_{\Gamma_D}(\Omega),$ we know that there exists   $f_0\in L^{p'}(\Omega)$ and $\tilde  f\in L^{p'}(\Omega)^N$ such that $f$ coincides with $f_0+\nabla \cdot \tilde f$ in the dual Sobolev space $(W^{1,p}_0(\Omega))^*$ ; i.e.
  		\begin{equation}
  			\langle f,\xi\rangle_{\Omega}     = \int_\Omega f_0\:\xi\: dx -   \int_\Omega \tilde  f\cdot \nabla \xi  \: dx ,\quad\forall\:  \xi\in 
  			W^{1,p}_{0}(\Omega). 
  		\end{equation}
  		Keeping in mind  that  $f$ satisfies \eqref{plaplace2},  we have  
  		$$\int_\Omega  \phi \cdot \nabla \xi \: dx  = \langle f,z\rangle_{\Omega}  =    \int _\Omega f_0\: \xi \: dx  -\int_\Omega  \tilde f\cdot \nabla \xi \: dx, \quad \forall\: \xi\in  W^{1,p}_{0}(\Omega),$$ 
  		so that  that $\nabla \cdot (\phi+\tilde f)=f_0$ in $\D'(\Omega).$ 
  		In particular, this implies that  $\phi+\tilde f \in  H^{p'}(div,\Omega),$       $(\phi+\tilde f ) \cdot \nu  \in W^{-\frac{1}{p'},p'}(\Gamma_N)$ is well defined, and we have 
  		\begin{equation} 
  			\int_\Omega (\phi+\tilde f ) \cdot \nabla \xi   \: dx =   \int_\Omega f_0\: \xi \: dx  + 	\langle (\phi+\tilde f ) \cdot \nu  ,\xi   \rangle_{ \Gamma_N}  ,  \quad  \forall\: \xi\in  W^{1,p}_{\Gamma_D}(\Omega). 
  		\end{equation}  
  		Thus \eqref{W2} is fulfilled by taking   $f_{\Gamma_N}:= (\phi+\tilde f ) \cdot \nu.$

  	\end{proof}
  	
  	\begin{apremark}
  		\begin{enumerate}
  			\item   Lemma \ref{LdualW} aims to give a practical  characterization  of the dual bracket $\langle.,.\rangle_{W^{-1,p'}_{\Gamma_D}(\Omega),W^{1,p}_{\Gamma_D}(\Omega)} $   in terms of PDE. This characterizations emerges a contribution  of any element $f\in W^{-1,p'}_{\Gamma_D}(\Omega)$ on the boundary $\Gamma_N.$  In particular, Lemma \ref{LdualW}  presents  two equivalent manners to handle this contribution. 
  			\begin{enumerate}
  				\item In (A1), this contribution is linked to the normal trace of $\overline f$ in \eqref{W1},   if it ever exists in a some ''good sense''.  
  				Indeed, solving $- \nabla \cdot \phi=f\in W^{-1,p'}_{\Gamma_D}(\Omega)$  is equivalent to  find $\phi$ such that  the PDE  
  				$$\left\{ \begin{array}{ll}  
  					\nabla \cdot \phi=f_0+\nabla \cdot \overline f \quad  & \hbox{ in }  \Omega \\  \\ 
  					(\phi+\overline f)\cdot \nu =0 &\hbox{ on }\Gamma_D\end{array} \right.$$
  				is fulfilled  in a weak sense ; i.e. 
  				$$\int_\Omega  (\phi+\overline f)\cdot \nabla \xi =\int _\Omega f_0\: \xi,\quad \forall\:  \xi\in W^{1,p}_{\Gamma_D}(\Omega).$$  In some sense the boundary condition $(\phi+\overline f)\cdot \nu$ needs to be understood in $W^{-1/p',p'}(\Gamma_N)$. Yet, despite particular cases,  the normal trace of $\overline f$ is not $W^{-1/p',p'}(\Gamma_N)$ in general.

  				\item  The characterization (A2) aims  to exhibit explicitly a  $W^{-1/p',p'}(\Gamma_N)$   contribution of $f$ on the boundary $\Gamma_N$.  Yet, one sees that the formulation still involve the normal boundary trace of $\tilde f$ which is not $W^{-1/p',p'}(\Gamma_N)$  in general.   See that,  solving $- \nabla \cdot \phi=f\in W^{-1,p'}_{\Gamma_D}(\Omega)$ with this formulation is equivalent  to  find $\phi$ such that  the PDE  
  				$$\left\{ \begin{array}{ll}  
  					\nabla \cdot \phi=f_0+\nabla \cdot \tilde  f \quad  & \hbox{ in }  \Omega \\  \\ 
  					(\phi+\tilde f)\cdot \nu =\pi &\hbox{ on }\Gamma_D\end{array} \right.$$
  				in a weak sense ; i.e. 
  				$$\int_\Omega  (\phi+\overline f)\cdot \nabla \xi =\int_\Omega  f_0\: \xi +  	\langle \pi ,\xi   \rangle_{ \Gamma_N} ,\quad  \forall\: \xi\in W^{1,p}_{\Gamma_D}(\Omega).$$  Again,  the boundary condition $(\phi+\overline f)\cdot \nu$ needs to be understood in $W^{-1/p',p'}(\Gamma_N)$.    In some sense, the formulation (A2) aims just to rewrite $\overline f$ by emerging explicitly an   $W^{-1/p',p'}(\Gamma_N)$ term. This makes it easier to write $f$ in situations where $\overline f$  can be expressed clearly using a $\tilde f,$ which has a  null normal trace on the boundary $\Gamma_N,$ and a   $W^{-1/p',p'}(\Gamma_N)$ term.   Yet, this is not fulfilled  in general.  
  				
  			\end{enumerate} 
  			
  			\item As we see in the proof, the term $f_0$ may be considered equals to $0.$  Nevertheless, we keep this term in the decomposition to make it easier to write $f$ in situations where $  f$ is an $L^{p'}$ function.  
  		\end{enumerate}
  		
  	\end{apremark}

  	\subsection{PDE chain rule formula}
  	
  	\begin{aplemma}\label{ltech2}
  		Let $\rho,\: f \in L^{p'}(0,T;W^{-1,p'}_{\Gamma_D}(\Omega)) $  and $\Psi\in L^{p'}(Q)$ be such that  $ \partial_t 	 \rho  =   \nabla\cdot  \Psi  + f     $ in $Q$ and $	\Psi  \cdot \nu  =  \pi $ on $\Sigma_{N  }$ in the sense that 
	\begin{equation}  
	\frac{d}{dt}\int_\Omega \rho\: \xi\: dx + \int _\Omega \  \Psi    \cdot \nabla \xi \: dx = \langle f,\xi\rangle_{\Omega} + \langle \pi,\xi\rangle_{\Gamma_N} ,\quad \hbox{ in }\D'([0,T)),\quad \forall \xi\in W^{1,p}_{\Gamma_D}(\Omega).
\end{equation}   
  		Then, for any $\eta \in L^p(0,T;W^{1,p}(\Omega))$ such that $\eta =g$ on $\Gamma_D$ and $\eta\in \partial \beta(.,\rho)$ a.e. in $Q,$ we have 
  		\begin{equation}\label{integraleEq}
  			\frac{d}{dt}\int_\Omega (\beta(x,\rho) -\tilde g\: \rho)  +\int_\Omega \Psi\cdot \nabla (\eta-\tilde g) = \langle f, \eta-\tilde g\rangle_{\Omega}  + \langle \pi,\eta \rangle_{\Gamma_N}, \quad \hbox{ in }\D'([0,T))   . 
  		\end{equation}
  	\end{aplemma}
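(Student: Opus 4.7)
The plan is to test the weak formulation of the PDE with the time-dependent function $\xi = \eta - \tilde g$. Because $\eta = g = \tilde g$ on $\Gamma_D$, we have $\eta - \tilde g \in L^p(0,T; W^{1,p}_{\Gamma_D}(\Omega))$, so it is a legitimate test function against the duality pairing $\langle\cdot,\cdot\rangle_\Omega$. The hypothesis and Lemma~A.1 (used with the boundary datum $\pi$) guarantee that $\partial_t \rho = f + \nabla\cdot\Psi \in L^{p'}(0,T; W^{-1,p'}_{\Gamma_D}(\Omega))$ in the sense
$$\langle \partial_t \rho, \xi\rangle_\Omega = \langle f, \xi\rangle_\Omega - \int_\Omega \Psi\cdot\nabla\xi + \langle \pi, \xi\rangle_{\Gamma_N}, \qquad \forall \xi\in W^{1,p}_{\Gamma_D}(\Omega),$$
so that applying this formally to $\xi = \eta - \tilde g$ (and using $\tilde g = 0$ on $\Gamma_N$ to rewrite $\langle \pi, \eta-\tilde g\rangle_{\Gamma_N} = \langle \pi, \eta\rangle_{\Gamma_N}$) reproduces the right-hand side of \eqref{integraleEq}.

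The remaining task is to identify $\langle \partial_t \rho, \eta - \tilde g\rangle_\Omega$ with $\frac{d}{dt}\int_\Omega (\beta(\cdot,\rho) - \rho\,\tilde g)$ in $\mathcal{D}'([0,T))$. This splits into two pieces. Since $\tilde g$ is time-independent, the trivial piece reads
$$\langle \partial_t\rho, \tilde g\rangle_\Omega = \frac{d}{dt}\int_\Omega \rho\,\tilde g \quad \text{in } \mathcal{D}'([0,T)).$$
The nontrivial piece is the convex chain rule
$$\frac{d}{dt}\int_\Omega \beta(x,\rho(t,x))\,dx = \langle \partial_t\rho, \eta\rangle_\Omega \quad \text{in } \mathcal{D}'([0,T)),$$
which uses $\eta(t,x) \in \partial_r \beta(x,\rho(t,x))$ a.e. Note that although $\eta$ itself is only in $L^p(0,T; W^{1,p}(\Omega))$, the combination $\eta - \tilde g$ lies in $L^p(0,T; W^{1,p}_{\Gamma_D}(\Omega))$, so once the two chain rule identities are added the duality on the right-hand side is well posed.

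The main obstacle is this convex chain rule, which I plan to establish by time regularization. Convolving $\rho$ and $\eta$ with a mollifier $\omega_\delta(t)$ in time produces $\rho_\delta, \eta_\delta$ that are smooth in $t$ with $\partial_t\rho_\delta = (\partial_t\rho)*\omega_\delta$ in the dual sense. Using the subdifferential inequality
$$\beta(x,\rho(t_2)) - \beta(x,\rho(t_1)) \geq \eta(t_1)\bigl(\rho(t_2)-\rho(t_1)\bigr),$$
and the symmetric bound with $t_1, t_2$ interchanged, one obtains two-sided estimates for $\int_\Omega \beta(\cdot,\rho_\delta)(t_2) - \int_\Omega \beta(\cdot,\rho_\delta)(t_1)$ in terms of $\int_{t_1}^{t_2}\langle \partial_t\rho_\delta, \eta_\delta\rangle\,dt$; passing to the limit $\delta\to 0$ (using Jensen's inequality, the $L^p$-$L^{p'}$ convergences of the mollified quantities, and Fatou/dominated convergence on the convex integrand $\beta$) yields the desired absolute continuity of $t\mapsto \int_\Omega \beta(\cdot,\rho(t))$ together with the differentiation formula. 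This is a classical Alt--Luckhaus / Mignot--Bamberger type argument. Combining the two chain rule identities and rearranging gives exactly \eqref{integraleEq}.
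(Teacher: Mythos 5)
Your proposal is correct and follows essentially the same route as the paper: both rest on the two-sided convex subdifferential inequality for $\beta$ combined with a time regularization to justify the chain rule. The paper implements the regularization via forward and backward Steklov averages of $\eta-\tilde g$ built directly into the test function of the weak formulation (thereby working with the combination $\eta-\tilde g$ throughout, which sidesteps the ill-posedness of the separate pairings $\langle \partial_t\rho,\eta\rangle_{\Omega}$ and $\langle \partial_t\rho,\tilde g\rangle_{\Omega}$ that you yourself flag), whereas you mollify $\rho$ and $\eta$ in time and then add the two identities; these are interchangeable implementations of the same Alt--Luckhaus/Mignot--Bamberger argument.
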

  	\begin{proof} 
  		Let $\psi \in \mathcal{D}(]0,T[\times \mathbb{R}^N)$ be such that $\psi\ge 0.$ For     any $h>0$, we consider 
  		$$\eta^n_h(t):=\frac{1}{h}\int_t^{t+ h} \underbrace{(\eta(s) -\tilde g)}_{=:\eta_g(s)} \: \psi(s)ds,$$
  		 See that  $\eta^n_h$ can be used as test function in (\ref{integraleEq})
  		and therefore
  		$$ \underbrace{\int_0^T\int_\Omega \rho (t)\frac{\eta_g (t+h)\psi(t+h)-\eta_g(t)\psi(t)}{h}}_{=: I^n_h} 	+  \int_0^T\!\! \int_\Omega \Psi  \cdot \nabla \eta^n_h =  \int_0^T \langle  f, \eta^n_h \rangle_{\Omega}  + \int_0^T\!\! \langle \pi,\eta \rangle_{\Gamma_N}     .$$ 
  		We see that 	$$\begin{array}{ll} 
  			I^n_h   
  			&=  
  			\int_0^T\int_\Omega \rho (t)\frac{\eta (t+h)\psi(t+h)-\eta(t)\psi(t)}{h} 
  			-  \int_0^T\int_\Omega g\: \rho (t)\frac{ \psi(t+h)-\psi(t)}{h} 
  			\\ \\     	& 	=
  			\int_0^T\int_\Omega\frac{\rho(t-h)-\rho(t)}{h}\eta_g(t)\psi(t)   -  \int_0^T\int_\Omega g\: \rho (t)\frac{ \psi(t+h)-\psi(t)}{h}   \\  \\ 
  			&\leq \int_0^T\int_\Omega\frac{\beta(\rho(t-h))-\beta(\rho(t))}{h} \psi(t)    -  \int_0^T\int_\Omega g\: \rho (t)\frac{ \psi(t+h)-\psi(t)}{h} \\  \\ 
  			&\leq \int_0^T\int_\Omega\frac{\psi(t+h)   -\psi(t)  }{h}  \beta(\rho(t))  -  \int_0^T\int_\Omega g\: \rho (t)\frac{ \psi(t+h)-\psi(t)}{h}  .
  		\end{array}$$
  		So, taking limits as $\tau\to 0^+$ we get 
  		$$ \int_0^T\int_\Omega \partial^n_t \psi\:  (\beta(x,\rho(t))-\tilde g)   +  \int_0^T\!\! \int_\Omega  \Psi  \cdot \nabla (\eta-\tilde g) \: \psi  \geq   \int_0^T \psi \: \langle   f, \eta-\tilde g\rangle_{\Omega}     + \int_0^T\psi\:  \langle \pi,\eta \rangle_{\Gamma_N}     $$
  		Taking now
  		$\tilde\eta_\tau(t)=\frac{1}{\tau}\int_t^{t+\tau} \eta(s-\tau)\psi(s)ds$,
  		and arguing as above we get the converse  inequality. That is  
  		$$ \int_0^T\int_\Omega \partial^n_t \psi\:  (\beta(x,\rho(t))-\tilde g)   +  \int_0^T\!\! \int_\Omega \Psi  \cdot \nabla (\eta-\tilde g) \: \psi  =   \int_0^T\psi \:  \langle   f, \eta-\tilde g\rangle_{\Omega}    + \int_0^T  \langle \pi,\eta\rangle_{\Gamma_N}   .  $$ 
  		Thus \eqref{integraleEq}. 
  	\end{proof}

  	\subsection{Week  Aubin's result}
  	\begin{aptheorem}[cf. \cite{ACM} and \cite{Moussa}]\label{ThBCM}
  		Let $(u_n)_n$ and $(v_n)_n$ be two bounded sequences in $L^p (0,T;  W^{1,p} (\Omega) ) $  and $L^{p'}(Q),$ respectively. Assume moreover that   $v_n$ satisfies the following weak estimates 	\begin{equation}
  			\int_0^T\!\!\int_\Omega v_n\: \partial^n_t\:  \varphi\: dxdt \leq C\Vert \nabla \varphi\Vert_{L^\infty(Q)},\quad \hbox{ for any }\varphi\in\D( Q). 
  		\end{equation} 
  		If  $u\in L^p (0,T;  W^{1,p} (\Omega) ) $  and $v\in L^{p'}(Q),$ are such that, up to extraction of a subsequence,  $$ u_n\to u   \hbox{ in }L^p (0,T;  W^{1,p} (\Omega) ) -\hbox{weak},  \hbox{ and }  v_n\to v   \hbox{ in }L^{p'} (Q) -\hbox{weak}  $$
  		then 
  		\begin{equation}
  			\int_0^T\!\!\int_\Omega u_nv_n\: \varphi\: dxdt \to 	\int_0^T\!\!\int_\Omega u\: v \: \varphi\: dxdt  \quad \hbox{ for any }\varphi\in \D ( Q).
  		\end{equation}
  		If moreover, $v_n\in \alpha(.,u_n)$ a.e. in $Q,$ where  $\alpha(z,.)_{z\in Q}$ is a sequence of measurable maximal monotone graph    with $0\in \alpha(.,0),$ a.e. in $Q,$ then  the following assertions are fulfilled \begin{enumerate}
  			\item $v\in \alpha(.,u)$ a.e. in $Q,$
  			\item a.e. $(t,x)\in Q,$ either  $u_n(t,x)\to u(t,x)$ or $v_n(t,x)\to v(t,x),$ 
  			\item  if $\alpha$  (resp. $\alpha^{-1}$)  is single valued then  $v_n \to v $ (resp.  $u_n \to u$) a.e. in $Q.$
  			
  		\end{enumerate}
  	\end{aptheorem}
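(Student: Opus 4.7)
The plan is to establish first the weak product convergence $\int u_n v_n \varphi \to \int u v \varphi$ for $\varphi \in \D(Q)$, then deduce the graph inclusion $v\in\alpha(\cdot,u)$ by a Minty-type monotonicity argument, and finally derive the pointwise dichotomy (and the single-valued cases) as corollaries.

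For the product convergence, the core idea is to extract strong time-compactness for $v_n$ via a duality version of Aubin--Lions--Simon. Observe that $v_n$ is bounded in $L^{p'}(0,T;L^{p'}(\Omega))$, while $\partial_t v_n$ is controlled in a negative-Sobolev sense by the weak estimate against test functions having $L^\infty$ spatial gradient. By Rellich--Kondrachov, $W^{1,p}_0(\Omega)\hookrightarrow L^p(\Omega)$ is compact, so by dualizing a compact operator we obtain the compact embedding $L^{p'}(\Omega)\hookrightarrow W^{-1,p'}(\Omega)$. An Aubin--Simon theorem with $X=L^{p'}(\Omega)$, $B=W^{-1,p'}(\Omega)$, and a sufficiently weak space $Y$ (of type $W^{-1,\infty}(\Omega)$, matching the assumed control on $\partial_t v_n$) then yields, up to subsequence, $v_n\to v$ strongly in $L^{p'}(0,T;W^{-1,p'}(\Omega))$. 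Since $\varphi\in \D(Q)$ has compact support in $\Omega$ at each time, $u_n\varphi\in L^p(0,T;W^{1,p}_0(\Omega))$ and $u_n\varphi\to u\varphi$ weakly there. The strong/weak duality pairing then gives
\begin{equation*}
\int_Q u_n v_n\,\varphi\,dxdt \;=\; \langle v_n,\, u_n\varphi\rangle_{L^{p'}(0,T;W^{-1,p'}),\,L^p(0,T;W^{1,p}_0)} \;\longrightarrow\; \langle v,u\varphi\rangle \;=\; \int_Q u v\,\varphi\,dxdt.
\end{equation*}

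For the claim $v\in\alpha(\cdot,u)$ a.e., I would use Minty's trick. Fix $w\in L^p(Q)\cap L^\infty(Q)$ and any measurable selection $z\in\alpha(\cdot,w)$. Monotonicity gives $(u_n-w)(v_n-z)\ge 0$ a.e. in $Q$; testing against nonnegative $\varphi\in \D(Q)$ and expanding, all four resulting terms pass to the limit (the $u_n v_n$ term by the product convergence above, the cross terms by the individual weak convergences of $u_n$ and $v_n$), so $\int_Q (u-w)(v-z)\varphi\ge 0$. Since $\varphi\ge 0$ is arbitrary, $(u-w)(v-z)\ge 0$ a.e. Varying $w$ over a countable dense family and invoking maximality of $\alpha(x,\cdot)$ yields $v(t,x)\in\alpha(x,u(t,x))$ a.e. For the a.e.\ dichotomy, specialize to $w=u$, $z=v$: monotonicity gives $(u_n-u)(v_n-v)\ge 0$ a.e., and the product-convergence identity with a suitable partition of unity forces $\int_Q (u_n-u)(v_n-v)\to 0$, hence a subsequence converges to $0$ pointwise a.e.; by the structure of maximal monotone graphs, at each such point either $u_n\to u$ or $v_n\to v$. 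If $\alpha$ is single-valued, the inclusion $v_n\in\alpha(\cdot,u_n)$ together with the dichotomy and continuity of $\alpha$ (on its effective domain, up to a measure-zero set) gives $v_n\to v$ a.e.; the corresponding statement with $\alpha^{-1}$ single-valued is symmetric.

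The main obstacle is the Aubin--Simon step, since the time regularity assumed on $v_n$ is unusually weak: the bound is against the $L^\infty$ norm of the spatial gradient of the test function, not against a standard $L^r(W^{-1,q})$ norm of $\partial_t v_n$. One therefore needs to identify an intermediate Banach space $Y$ containing $\partial_t v_n$ in a bounded way and into which $W^{-1,p'}(\Omega)$ embeds continuously, while keeping the compact embedding $L^{p'}(\Omega)\hookrightarrow\subset W^{-1,p'}(\Omega)$ intact. This requires a refined version of the Simon compactness criterion; such refinements are exactly what the cited works of Andreanov--Cancès--Moussa and Moussa supply.
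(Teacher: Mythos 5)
First, a point of comparison: the paper does not actually prove this statement. Theorem A.1 is imported from the cited works, and the line immediately following it reads ``For the proof of this theorem one can see the papers \cite{ACM} and \cite{Moussa}.'' So there is no internal proof to measure your argument against; one can only compare with the standard route in those references, and your outline (time-compactness of $v_n$ in a negative Sobolev space paired against the spatial compactness/weak convergence of $u_n$, followed by Minty's trick for the graph inclusion and the resulting pointwise dichotomy) is indeed the expected architecture.

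As a self-contained proof, however, your proposal has a genuine gap precisely at its load-bearing step. The product convergence rests entirely on upgrading the hypothesis $\int_Q v_n\,\partial_t\varphi \le C\Vert\nabla\varphi\Vert_{L^\infty(Q)}$ to strong convergence of $v_n$ in $L^{p'}(0,T;W^{-1,p'}(\Omega))$ via an Aubin--Simon argument, and you explicitly defer the required ``refined version of the Simon compactness criterion'' to the very papers being cited. That refinement \emph{is} the substance of the theorem: the classical Aubin--Lions--Simon hypotheses are not met here, since the time-derivative bound only places $\partial_t v_n$ in (roughly) the dual of $L^\infty(0,T;W^{1,\infty}_0(\Omega))$, i.e. a measure in time with values in a very weak space, so invoking the refinement is circular rather than a proof. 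Two smaller points also need repair. (i) In the Minty step, the cross term $\int_Q u_n z\,\varphi$ passes to the limit only if the selection $z\in\alpha(\cdot,w)$ lies in $L^{p'}(Q)$; for an arbitrary measurable maximal monotone graph and a merely bounded $w$ this is not automatic, so one must restrict $w$ to a suitable countable family in the interior of the domain of $\alpha(x,\cdot)$ or work with resolvent/Yosida approximations. (ii) The pointwise dichotomy does not follow from $(u_n-u)(v_n-v)\to 0$ a.e. alone: the two factors could stay large along complementary subsequences. Ruling this out requires using monotonicity between $(u_n,v_n)$ and $(u_m,v_m)$ for $n\neq m$ (two such configurations with $u$ large/$v$ small and $u$ small/$v$ large violate monotonicity of the graph), which your appeal to ``the structure of maximal monotone graphs'' gestures at but does not carry out.
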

  	For the proof of this theorem one can see the papers \cite{ACM} and \cite{Moussa}.

  	\section*{Acknowledgments} 
  	We would like to thank Chérif Amrouche  for many  interesting discussions  about Sobolev dual spaces.

  	\vspace*{10mm}

\end{document}